\newif\iffull
\def\ps@pprintTitle{%
 \let\@oddhead\@empty
 \let\@evenhead\@empty
 \def\@oddfoot{}%
 \let\@evenfoot\@oddfoot}
\definecolor{hrcitecolor}{rgb}{0.78,0,0.27}
\definecolor{hrlinkcolor}{rgb}{0.2,0,0.75}
\definecolor{hrurlcolor}{rgb}{0.1,0.1,0.1}
\newtheorem{theorem}{Theorem}[section]
\newtheorem{lemma}[theorem]{Lemma}
\newtheorem{proposition}[theorem]{Proposition}
\newtheorem{corollary}[theorem]{Corollary} 
\theoremstyle{definition}
\newtheorem{definition}[theorem]{Definition}
\theoremstyle{remark}
\newtheorem{remark}[theorem]{Remark}
\newtheorem{example}[theorem]{Example}
\algrenewcommand{\algorithmiccomment}[1]{\hfill $\rhd$ \emph{#1}}
\algrenewcommand{\algorithmicrequire}{\textbf{Input:}}
\algrenewcommand{\algorithmicensure}{\textbf{Output:}}
\algnewcommand{\Or}{\textbf{or}}
\algnewcommand{\And}{\textbf{and}}
\algnewcommand{\Not}{\textbf{not}\,}
\newcommand\pvec[2]{\begin{pmatrix} #1 \\ #2 \end{pmatrix}}
\newcommand\TTmax{\mathbb{T}_{\max}}
\newcommand\SetOf[2]{\left\{\left.#1\vphantom{#2}\ \right|\ #2\vphantom{#1}\right\}}
\newcommand\tcone[1]{\operatorname{tcone}(#1)}
\DeclareMathOperator{\supp}{supp}
\newcommand\KK{{\mathbb K}}
\DeclareMathOperator\val{val}
\newcommand\TT{{\mathbb T}}
\newcommand\TTpm{\mathbb T_{\pm}}
\newcommand\RR{{\mathbb R}}
\newcommand\ZZ{{\mathbb Z}}
\newcommand\BB{{\mathbb B}}
\newcommand\trans[1]{{#1}^{\top}}
\newcommand\teq{\mathrel{\vDash}}
\newcommand\seq{\mathrel{\rotatebox[origin=c]{180}{\ensuremath{\vDash}}}} 
\newcommand\beq{\bowtie} 
\newcommand{\TSS}{\mathbb{S}}
\newcommand{\NN}{\mathbb{N}}
\newcommand{\STH}{\mathbb{H}}
\newcommand\Id{\boldsymbol{I}}
\DeclareMathOperator\cancsum{\overline{\oplus}}
\DeclareMathOperator{\Hyp}{Hyp}
\DeclareMathOperator{\sval}{sval}
\DeclareMathOperator{\conv}{conv}
\DeclareMathOperator{\slog}{slog}
\DeclareMathOperator{\sgn}{sgn}
\DeclareMathOperator{\tsgn}{tsgn}
\DeclareMathOperator{\argmax}{argmax}
\DeclareMathOperator{\argmin}{argmin}
\DeclareMathOperator{\tconv}{tconv}
\newcommand\nnker{\text{ker}_+}
\newcommand\sep{\text{sep}_+}
\newcommand\Tleq{\TT_{\leq \mathds{O}}}
\newcommand\Tgeq{\TT_{\geq \mathds{O}}}
\newcommand\Tlt{\TT_{< \mathds{O}}}
\newcommand\Tgt{\TT_{> \mathds{O}}}
\newcommand\Tzero{\TT_{\bullet}}
\newcommand\Zero{\mathds{O}}
\newcommand\Tclosed{\overline{\TT}}
\newcommand\STT{\mathbb{ST}}
\newcommand\Uncomp{\mathcal{U}}
\newcommand\HC{\mathcal{H}}
\newcommand\Hclosed{\overline{\mathcal{H}}}
\DeclareMathOperator{\Co}{Co}
\newcommand\bA{\mathbf{A}}
\newcommand\blambda{\mathbf{\lambda}}
\newcommand\pseries[2]{#1\{\!\!\{#2\}\!\!\}} 
\def\input@path{{./Images/}{./}}
\begin{document}

\title{Signed tropical convexity}


\begin{abstract}
  We establish a new notion of tropical convexity for signed tropical numbers.
 We provide several equivalent descriptions involving balance relations and
 intersections of open halfspaces as well as the image of a union of polytopes over Puiseux series and hyperoperations.
  Along the way, we deduce a new Farkas lemma and Fourier-Motzkin elimination without the non-negativity restriction on the variables.
  This leads to a Minkowski-Weyl theorem for polytopes over the signed tropical numbers.
\end{abstract}

\author{Georg Loho}
\address{London School of Economics and Political Science. }
\email{g.loho@lse.ac.uk}

\author{L{\'a}szl{\'o} A. V{\'e}gh}
\address{London School of Economics and Political Science. }
\email{l.vegh@lse.ac.uk}

\thanks{Supported by the ERC Starting Grant ScaleOpt. }

\maketitle

\section{Introduction}

Tropical convexity is an important notion with applications in several branches of mathematics.
It arises from the usual definition of convexity by replacing $+$ with $\max$ and $\cdot$ with $+$.
This notion has been studied for several years involving different approaches from extremal algebra~\cite{Zimmermann:1977}, idempotent semirings~\cite{CohenGaubertQuadrat:2005}, max-algebra~\cite{ButkovicSchneiderSergeev:2007}, convex analysis~\cite{BriecHorvath:2004}, discrete geometry~\cite{DevelinSturmfels:2004}, matroid theory~\cite{FinkRincon:2015}.
So far, it was mainly studied in $\TTmax = \RR \cup \{-\infty\}$.
Indeed, this is essentially a restriction to the tropical non-negative orthant, as $r \geq -\infty$ for all $r \in \TTmax$, where $-\infty$ is the tropical zero element.
We remedy this restriction by introducing a notion of tropical convexity involving all orthants.
We give our main points of motivation for our generalization. 

Mean payoff games are equivalent to feasibility of a tropical linear inequalities 
\begin{equation} \label{eq:tropical+linear+inequality+system}
  \bigoplus_{j \in J_i} a_{ij} + x_j \geq \bigoplus_{j \in [n]
    \setminus J_i} a_{ij} + x_j\quad \forall j\in [m]\ ,
\end{equation}
where $a_i,x\in \TTmax^n$, for $i\in [n]$, see~\cite{AkianGaubertGuterman:2012}.
This problem is in NP~$\cap$~co-NP, but no polynomial-time algorithm is known~\cite{GKK:1988}.
Furthermore, the latter feasibility problem is intimately related to
the feasibility problem for classical linear inequality
systems~\cite{Schewe2009,AllamigeonBenchimolGaubertJoswig:2015}.
The tropical linear feasibility problem is also a special scheduling problem~\cite{MoehringSkutellaStork:2004} and it can be considered as a particular disjunctive programming problem~\cite{Balas:1979}. 

In the context of mean payoff games, signs can be used to represent
the two sides of the inequality
\eqref{eq:tropical+linear+inequality+system}. This system can be
concisely written as $A\odot x \geq \Zero, x\geq \Zero$, where $\Zero$
represents the vector of $-\infty$'s, and $A\in \TTpm^{m\times n}$
is a matrix with $A_{ij}=a_{ij}$ if $j\in J_i$ and $A_{ij}=\ominus
a_{ij}$ if $j\notin J_i$. Here, $\ominus a_{ij}$ represents a
tropically negative number (note that $5$ and $-5$ are tropically positive, while
$\ominus 5$ and $\ominus -5$ are tropically negative).

Our main motivation is to extend some of the basic concepts and tools
from convex geometry and polyhedral combinatorics to signed tropical
numbers. Significant work has been done already in this direction,
however, some fundamentals are still missing: in particular, a
satisfactory notion of convexity for signed tropical numbers has not
been given prior to this work.

Such an investigation is motivated both by algorithmic questions
as well as in the context of  recent developments in tropical
geometry. Solving mean payoff games in polynomial time is a major open
question. Developing tropical versions of polyhedral combinatorics
tools may lead to
new approaches to tackle this problem. For example, there is a relatively recent
class of polynomial-time algorithms for linear programming are
naturally formulated as deciding if the origin is in the convex hull of a
set of points, see, e.g.,~\cite{Chubanov:2015}. 
Our convexity notion provides an analogous formulation for the
tropical linear feasibility problem in terms of the signed convex hull
of the coefficient vectors.

Signed tropical numbers have been used in important recent results in
tropical geometry, for example, in the 
tropicalization of the simplex method~\cite{AllamigeonBenchimolGaubertJoswig:2015}.
The study of real tropicalization of semialgebraic sets~\cite{JellScheidererYu:2018} follows a similar spirit.
Another approach to extend from $\TTmax^d$ involving signs is to deduce the structure of a variety by `unfolding' it from the positive orthant into the other orthants, which is formalized by the patchworking introduced in~\cite{Viro:2006} that has several applications in algebraic geometry.

Furthermore, separation theorems like Farkas' lemma for linear
programming have their easiest formulation in terms of separation from
the origin leading to powerful generalizations to oriented matroids, see~\cite{BachemKern:1992}.
Our approach allows to formulate an analogous theory for tropical linear programming. 
This gives new possibilities for studying tropical normal fans and tropical hyperplane arrangements.
\smallskip

Arithmetic operations can be naturally extended from $\TTmax$ to the
set of signed tropical numbers $\TTpm$. There is however a critical
case of degeneracy, namely, adding a positive and a negative number of
the same absolute value. In the context of
the formulation
\eqref{eq:tropical+linear+inequality+system}, this corresponds to
allowing the same variable $x_j$ on both sides with the same $a_{ij}$
term. Introducing {\em balanced numbers} are a standard way to carry out
such additions: $5\oplus(\ominus 5)=\bullet 5$, thus obtaining the {\em
  symmetrized semiring}. Unfortunately, it is not possible to order
this structure, and hence, it is desirable to study the geometry
restricted to the signed numbers $\TTpm$.

\subsection{Our contributions}
Tropical convexity has been well-studied for $\TTmax$ (see
\cite{AllamigeonBenchimolGaubertJoswig:2015}). 
Balanced numbers constitute an important challenge in extending this
notion to signed tropical numbers.
Consider for example the points $(1,1)$
and $(\ominus 1,\ominus 1)$ in $\TTpm^2$. It is natural to include all points
$(a,a)$ and $(\ominus a,\ominus a)$ for $a\le 1$ as well as
$(-\infty,-\infty)$ as the convex
combinations of these points. Our convexity notion will additionaly
include every point of the form $(a,b)$, $(\ominus a,\ominus b)$, $(\ominus a, b)$, and $(a, \ominus b)$
for $a,b\le 1$; thus, the convex combination of these two points in
$\TTpm^2$ will be a rectangle rather than a line segment. We obtain
these combinations by `resolving' the balanced combination $(\bullet 1,\bullet 1)$
of these two points.

Our definition of signed tropical convexity
(Definition~\ref{def:inner+hull}) just arises from the usual
definition of tropical convexity by replacing equality `$=$' with the
balance relation `$\beq$' first introduced in~\cite{Plus:90}.
We provide multiple arguments justifying why this is the right
definition. A standard way to introduce the tropical semiring is via
Puiseux lifts. It turns out (Theorem~\ref{thm:lift+hull}) that this
construction yields the tropicalization of the union of all
possible lifts. This notion of signed tropical convexity also comes up
naturally in the context of hyperoperations
(Section~\ref{subsec:hull+hyperoperation}).
Instead of balanced numbers, hyperoperations have a multi-valued
addition of signed numbers. Defining convexity for hyperoperations
coincides with our notion. However, the advantage of using balanced
numbers is to maintain a finite representation for computations.

Certain properties of the signed tropical convex hull are surprising
and, compared to usual convexity, harder to deal with. For example, in
contrast to the classical notion,
there is no unique minimal generating set of a convex set: the convex
hulls of the points $\{(1,1),(\ominus 1,\ominus 1)\}$ and of
$\{(1,\ominus 1), (\ominus 1,1)\}$ coincide.
The duality of signed tropical convex hulls and tropical linear inequality systems is reflected in the dual notions of non-negative kernel~\eqref{eq:non+negative+kernel} and open tropical cones~\eqref{eq:def+sep}. 
We formalize a new version of Farkas' lemma (Theorem~\ref{thm:farkas+lemma}) for signed tropically convex sets.
We deduce it in a geometric way from new versions of Fourier-Motzkin
elimination for signed numbers
(Theorem~\ref{thm:balanced+fourier+motzkin} and
Corollary~\ref{cor:signed+fourier+motzkin}).

There is a curious difference between open and closed signed tropical halfspaces in
our framework. 
 Whereas open halfspaces are always convex, closed
halfspaces are typically not. In fact, even signed tropical
hyperplanes are convex in very special cases only
(Example~\ref{ex:coordinate+hyperplane}).
Whereas we show that the convex hull of a set of points coincides with
the intersection of all open halfspaces containing these points
(Theorem~\ref{thm:outer+hull+open+halfspaces}), the analogous
statement is not true for  closed tropical halfspaces
(Remark~\ref{rem:closed+tropical+halfspaces+hull}).

Nevertheless, we can derive a Minkowski-Weyl theorem
(Theorem~\ref{thm:signed+Minkowski+Weyl}): for every finite set of
points, their convex hull can be obtained as the intersection of
finitely many closed halfspaces, and conversely, if the intersection
of a finite set of closed halfspaces is convex (which is not always
the case), it can be obtained as the convex hull of a finite set of
points. The proof of the first direction is based on a
version of Fourier-Motzkin elimination for non-strict
inequalities. However, the elimination procedure may
create balanced coefficients that have to be resolved by signed
numbers. Such a transformation can be easily obtained for the case of
strict inequalities (Theorem~\ref{thm:replace+balanced}), but 
becomes rather challenging in the non-strict case. In fact, 
our proof (Proposition~\ref{prop:resolving+balanced+coefficiens+teq})
only shows existence, but does not even yield a finite algorithm.

Finally, we relate our notion to the known concept of tropical
convexity  over $\Tgeq$. We show that the signed tropical convex hull
can be obtained as the union of unsigned hulls in each orthant (Theorem~\ref{thm:generators+for+all+orthants}).

\subsection{Related work}

Our notion of signed tropical convexity heavily relies on the concept of the symmetrized tropical semiring $\TSS$, which goes back to~\cite{Plus:90}, and was further developed in~\cite{AkianGaubertGuterman:2014,Rowen:2016}, among others.
Signed numbers arise in the context of tropical convexity in~\cite{AllamigeonBenchimolGaubertJoswig:2015}, however only as coefficients for an inequality system.
The technically difficult aspects are the necessary properties of equality and order relations.
While~\cite{AkianGaubertGuterman:2014} also developed different notions replacing orders or equalities, they do not provide all necessary concepts to deal with the new notion of tropical convexity.
The relations $\beq$, $\teq$ and $>$ we use also appear in the context of hyperfields in~\cite{JellScheidererYu:2018}, where images of semi-algebraic sets are studied. 
The duality of the tropical analog of polar cones in~\cite{GaubertKatz:2009} can be considered as a predecessor of our duality in Section~\ref{subsec:dual+notions}. 
Infeasibility certificates for linear inequality systems were deduced from the duality of mean payoff games in~\cite{GrigorievPodolskii:2018, AllamigeonGaubertKatz:2011}.
A tropical version of Fourier-Motzkin elimination was established in~\cite{AllamigeonFahrenbergGaubertKatz:2014}.
The latter results rely on the (tropical) non-negativity of the variables and cannot be transferred directly to our setting, as we discuss also in Remark~\ref{rem:connection+nonnegative+FM} and Remark~\ref{rem:connection+nonnegative+Farkas}. 
The tropicalizations of polytopes~\cite{DevelinYu:2007} or more general semialgebraic sets~\cite{JellScheidererYu:2018} leads to the image of a single object. However, our construction naturally leads to the tropicalization of a union of polytopes arising as the convex hull of lifts of points. 
This is in some sense dual to the representation established in~\cite{JellScheidererYu:2018}, where all satisfied equations and inequalities are needed to describe the tropicalization of a single object.
Parallel to our work, similar structures for signed numbers are developed in~\cite{AkianGaubertRowen:2019,AkianAllamigeonGaubertSergeev:2019}. 





\section{Signed numbers and orderings}\label{sec:signed-numbers}

We introduce the necessary terminology for our purposes. For a recent comprehensive introduction to signed numbers and the symmetrized semiring, see~\cite{AkianGaubertGuterman:2014}. 

\subsection{Signed numbers}

We define the signed tropical numbers $\TTpm$ by glueing two copies of $\left(\RR \cup \{-\infty\}\right)$ at $-\infty$.
One copy is declared the \emph{non-negative tropical numbers} $\Tgeq$ (this is often denoted by $\TT_{\max}$ in the literature), the other copy forms the \emph{non-positive tropical numbers} $\Tleq$.
Most of the time, we denote $-\infty$ by $\Zero$ as it is the tropical zero element.
The elements in $\Tleq \setminus \{\Zero\}$ are marked by the symbol $\ominus$.
The signed tropical numbers $\TTpm$ have a natural norm $|\,.\,|$ which maps each element of $\Tgeq$ to itself and removes the sign of an element in $\Tleq$. 
This gives rise to the order
\begin{equation} \label{eq:order+signed+numbers}
x \leq y \qquad \Leftrightarrow \qquad \begin{cases} x \in \Tleq \text{ and } y \in \Tgeq \\
  x \leq y  \text{ for } x,y \in \Tgeq \\
  |x| \geq |y|  \text{ for } x,y \in \Tleq
\end{cases} \enspace .
\end{equation}
Furthermore, we obtain the strict order $x < y \Leftrightarrow x \leq y \wedge x \neq y$.
The tropical signed space $\TTpm^d$ is the union of $2^d$ orthants which are copies of $\Tgeq^d$ glued along their boundary. 

\subsection{Balanced numbers}

To develop the technical tools for dealing with signed numbers, we use the \emph{symmetrized semiring} $\TSS$ which forms a semiring containing $\TTpm$, introduced in~\cite{Plus:90}.
This semiring is constructed with a third copy of $\RR \cup \{\Zero\}$ by glueing again at $\Zero$.
We denote the third copy, the \emph{balanced numbers}, by $\TT_{\bullet}$ and mark the elements by the symbol $\bullet$.
Unfortunately, the symmetrized semiring $\TSS$ cannot be ordered.
We extend the norm $|\,.\,|$ in such a way that it removes the $\bullet$ from an element in $\TT_{\bullet}$ and leaves the corresponding element in $\Tgeq$.
The complementary map $\tsgn$ from $\TSS$ to $\{\oplus,\ominus,\bullet,\Zero\}$ remembers only in which of the sets an elements lies: positive tropical numbers $\Tgt = \Tgeq \setminus \{\Zero\}$, negative tropical numbers $\Tlt = \Tleq \setminus \{\Zero\}$, balanced non-zero tropical numbers $\TT_{\bullet} \setminus \{\Zero\}$ or the tropical zero $\{\Zero\}$. 

Next, we define the binary operations of the semiring.
For $x,y \in \TSS$, we define the addition by
\begin{equation} \label{eq:balanced+addition}
  x \oplus y = \begin{cases}
    \argmax_{x,y}(|x|,|y|) & \text{ if } |\chi|=1 \\
    \bullet \argmax_{x,y}(|x|,|y|) & \text{ else } \enspace .
  \end{cases}
\end{equation}
where $\chi = \SetOf{\tsgn(\xi)}{\xi \in (\argmax(|x|,|y|))}$.
Note that we omit the sign for elements in $\Tgeq$. 
For the multiplication we set
\begin{equation}
  x \odot y = \left(\tsgn(x) * \tsgn(y)\right) (|x| + |y|) \enspace ,
\end{equation}
where the $*$-multiplication table is the usual multiplication of $\{-1,1,0\}$ for $\{\ominus,\oplus,\bullet\}$ with the additional specialty that multiplication with $\Zero$ yields $\Zero$. 

The operations $\oplus$ and $\odot$ extend to vectors and matrices componentwise.
Observe that the operations agree with the usual $\max$-tropical operations on $\Tgeq$. 

We can also consider $\ominus$ as a unary selfmap of the semiring; to this extent, we set
\[
  \ominus x =
\begin{cases}
  \ominus x & \text{ if } x \in \Tgt \\
  |x| & \text{ if } x \in \Tlt \\
  x & \text{ if } x \in \Tzero
\end{cases}
\enspace .
\]
The map $\ominus \colon \TSS \to \TSS$ is a semiring homomorphism. 
In particular, this justifies to write $a \ominus b$ for $a \oplus (\ominus b)$.

Furthermore, the absolute value fulfills $|a \oplus b| = |a| \oplus |b|$ by definition of the addition. 

\begin{example}
  Using the definitions, we see that $-5$ is positive, $\ominus 6$ and $\ominus -6$ are negative, $\bullet 3$ is balanced.
  Furthermore, the absolute value of $-5$ is $|-5| = -5$, of $\ominus 6$ is $|\ominus 6| = 6$, and $|\bullet 3| = 3$.
  Some simple sums are $3 \oplus (\ominus 3) = \bullet 3$, $-3 \oplus 5 = 5$, $-3 \oplus (\ominus 5) = \ominus 5$, $\bullet 2 \oplus 4 = 4$, $\bullet -3 \oplus \ominus -5 = \bullet -3$.
  Finally some simple products are $\bullet 3 \odot 5 = \bullet 8$, $\ominus 4 \odot -6 = \ominus -2$, $\ominus 1 \odot \ominus 1 = 2$, $\bullet 3 \odot \Zero = \Zero$, $\ominus 4 \odot \Zero = \Zero$. 
\end{example}

\subsection{Extending the order}

As already mentioned, the semiring $\TSS$ cannot be ordered in a consistent way with respect to its binary operations.
However, we will equip it with some binary relations, which partly fulfill the tasks of an order.
They occur under a different terminology in~\cite{JellScheidererYu:2018}; see~\ref{subsec:hull+hyperoperation}.

\subsubsection{Signed order}

Even if $\TSS$ cannot be ordered totally, we can extend the ordering from $\TTpm$ partially by setting
\begin{equation} \label{eq:extended+partial+order+simple}
  x > y \quad \Leftrightarrow \quad x \ominus y \in \Tgt \enspace . 
\end{equation}

This is equivalent to
\begin{equation} \label{eq:extended+partial+order}
  x > y  \quad \Leftrightarrow \quad
  \begin{cases}
  x > y & \text{ for } x,y \in \TTpm, \text{ see~\eqref{eq:order+signed+numbers} } \\
  x > |y| &  \text{ for } x \in \TTpm, y \in \Tzero \\
  \ominus |x| > y & \text{ with } x \in \Tzero, y \in \TTpm 
\end{cases} \enspace .
\end{equation}
Note that there are pairs in $\TTpm \times \Tzero$ and in $\Tzero \times \TTpm$ which are not comparable.
In particular, the signed numbers
\[
\{x \in \TTpm \mid x \not< a \text{ and } x \not> a\} \enspace ,
\]
which are incomparable to $a \in \Tzero$ via '$<$', form the interval
\begin{equation} \label{eq:incomparable+elements}
\Uncomp(a) := \left[\ominus |a|, |a|\right] := \{ x \in \TTpm \mid \ominus |a| \leq x \leq |a| \} \enspace .
\end{equation}
We also denote the set incomparable to a signed element $a \in \TTpm$, which is only the singleton $\{a\}$, by $\Uncomp(a)$.
 We extend this to vectors by setting $\Uncomp(v) = \prod_{i \in [d]} \Uncomp(v_i)$. 
Note that also no pair in $\Tzero \times \Tzero$ is comparable. 

The relation~\eqref{eq:extended+partial+order} gives rise to a non-strict relation
\begin{equation} \label{eq:extended+partial+order+non+strict}
  x \geq y \qquad \Leftrightarrow \qquad x > y \text{ or } x = y \enspace .
\end{equation}
which turns out to be a partial order in Corollary~\ref{cor:non-strict-relation-transitive}.

Observe that the ordering is compatible with the reflection map, in the sense that
\begin{equation} \label{eq:symmetry+reflection+order}
  x \geq y \qquad \Leftrightarrow \qquad  \ominus y \geq \ominus x \enspace .
\end{equation}

A useful property of strict inequalities is that they can be added together.
\begin{lemma} \label{lem:adding+strict+inequalities}
  For $a,b,c,d \in \TSS$, we have the implication
  \begin{equation}
    a < b \text{ and } c < d  \qquad \Rightarrow \qquad a \oplus c < b \oplus d \enspace .
  \end{equation}
\end{lemma}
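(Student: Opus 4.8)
The plan is to reduce everything to the defining relation \eqref{eq:extended+partial+order+simple} and a single algebraic identity. By that definition, $a<b$ (i.e.\ $b>a$) is equivalent to $b\ominus a\in\Tgt$, and similarly $c<d$ is equivalent to $d\ominus c\in\Tgt$. The desired conclusion $a\oplus c<b\oplus d$ is, in the same way, equivalent to $(b\oplus d)\ominus(a\oplus c)\in\Tgt$. So the whole statement collapses to showing that this last element is a strictly positive tropical number, given that $b\ominus a$ and $d\ominus c$ are.

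The crucial step is a purely algebraic rearrangement. Since $\ominus\colon\TSS\to\TSS$ is a semiring homomorphism it distributes over $\oplus$, so $\ominus(a\oplus c)=(\ominus a)\oplus(\ominus c)$; combining this with the commutativity and associativity of $\oplus$ yields
\[
(b\oplus d)\ominus(a\oplus c)=(b\ominus a)\oplus(d\ominus c)\enspace.
\]
Writing $p:=b\ominus a$ and $q:=d\ominus c$, the lemma is thereby reduced to the single claim that $p,q\in\Tgt$ implies $p\oplus q\in\Tgt$.

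This final claim is immediate from the definition of addition \eqref{eq:balanced+addition}. For positive $p,q$ we have $\tsgn(p)=\tsgn(q)=\oplus$ and $|p|,|q|>-\infty$, so $|p\oplus q|=\max(|p|,|q|)$ is finite and the sign set $\chi$ is the singleton $\{\oplus\}$ regardless of whether $|p|=|q|$. Hence no balanced coefficient arises, $p\oplus q\in\Tgt$, and therefore $(b\oplus d)\ominus(a\oplus c)\in\Tgt$, i.e.\ $a\oplus c<b\oplus d$.

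The only real subtlety is the rewriting of $(b\oplus d)\ominus(a\oplus c)$: one must make sure it is legitimate inside $\TSS$, which is exactly where the homomorphism property of $\ominus$ (so that it commutes with $\oplus$) is used; note this also lets $a,b,c,d$ be arbitrary elements of $\TSS$, including balanced ones, without any case distinction. It is worth stressing why the more naive two-step argument (add $d$, then add $\ominus a$, keeping strictness at each stage) fails: tropical addition is not order-monotone in the signed setting, since $b\oplus d$ can be strictly smaller than $b$ (for instance $5\oplus(\ominus 7)=\ominus 7<5$). Strictness can therefore only be recovered by combining both hypotheses simultaneously through the symmetric identity above.
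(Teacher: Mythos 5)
Your proof is correct and follows essentially the same route as the paper's: both reduce via \eqref{eq:extended+partial+order+simple} to showing $(b\ominus a)\oplus(d\ominus c)\in\Tgt$ and conclude by closure of $\Tgt$ under $\oplus$. You merely spell out two steps the paper leaves implicit — the rearrangement via the homomorphism property of $\ominus$, and the verification of closure from \eqref{eq:balanced+addition} — which is fine but not a different argument.
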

\begin{proof}
  By definition, we first get $b \ominus a > \Zero$ and $d \ominus c > \Zero$.
  As addition is closed in $\Tgt$, this yields $b \ominus a \oplus d \ominus c > \Zero$.
  The claim follows from~\eqref{eq:extended+partial+order+simple}.
\end{proof}

\begin{remark} \label{rem:reformulate+inequality+order}
 In general, the strict and non-strict partial order `$<$' and
 `$\leq$' on $\TSS$ is not compatible with addition. The inequality $3
 < 4$ does not imply $3 \oplus 5 < 4 \oplus 5$, and $3 \leq 4$ does
 not imply $\ominus 4 = 3 \ominus 4 \leq 4 \ominus 4 = \bullet
 4$. This is the main motivation for introducing the relation `$\teq$'
 below, which is not an ordering (as it lacks transitivity) but it is compatible with the addition.
  
An advantage of strict inequalities is the validity of
  \[
  a \oplus b > c \Leftrightarrow a > c \ominus b \enspace .
  \]
  The analogous reformulation
  \[
  a \oplus b \geq c \Leftrightarrow a \geq c \ominus b \enspace .
  \]
  is wrong in general.
  For example, $2 \oplus 5 \geq 5$ but $2$ is incomparable with $5 \ominus 5 = \bullet 5$.
  However, such reformulations hold for the relation `$\teq$', which we show in Lemma~\ref{lem:basic+properties+teq}\eqref{item:teq+other+side}. 
\end{remark}

\subsubsection{Balanced relations} \label{subsubsec:non-strict-relation}

The balance relation '$\Delta$' was introduced in~\cite{Plus:90}; we
will use the notation $\beq$ in this paper. We define 
\[
x\beq y \quad \Leftrightarrow \quad x \ominus y \in \Tzero \enspace .
\]
The following characterizations are immediate from the definitions.
For more properties of $\beq$, we refer to~\cite[{\S IV}]{Plus:90}.
\begin{lemma} \label{lem:basic+properties+beq} Let $a, b \in \TSS$.
  \begin{enumerate}[(a)]
\item\label{item:beq+char}
  $a \beq b$ is equivalent to $\left(a \in \Tzero, |a| \geq |b|\right) \vee \left(b \in \Tzero, |b| \geq |a|\right)\vee \left(a=b\right)$.
\item\label{item:beq+U} If $b\in \TTpm$, then $a\beq b$ is equivalent to
  $b\in \Uncomp(a)$.
\end{enumerate}
\end{lemma}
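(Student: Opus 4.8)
The plan is to prove both characterizations directly from the definition $x \beq y \Leftrightarrow x \ominus y \in \Tzero$, by unfolding the addition rule~\eqref{eq:balanced+addition} and the reflection map $\ominus$. For part~\eqref{item:beq+char}, I would analyze when $a \ominus b = a \oplus (\ominus b)$ lands in the balanced copy $\Tzero$. Recall that $\Tzero$ consists of the strictly balanced numbers together with the tropical zero $\Zero$; an element of $\TSS$ is balanced exactly when either it is genuinely balanced or it equals $\Zero$. The addition $a \oplus (\ominus b)$ produces a balanced result precisely in two situations governed by the case distinction in~\eqref{eq:balanced+addition}: either the $\argmax$ over absolute values is attained by a term that is itself already balanced (so $|\chi| \neq 1$ because a balanced sign is present), or the maximum is attained by both $a$ and $\ominus b$ with genuinely opposite signs so that their tropical signs do not agree.

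First I would handle the trivial disjunct: if $a = b$ then $a \ominus b = a \oplus (\ominus a)$, and since $\ominus a$ has the opposite sign with the same norm, the $\argmax$ is attained at both terms with clashing signs, yielding a balanced result (or $\Zero$ when $a = b = \Zero$); conversely this is subsumed. Next, suppose $a \in \Tzero$ with $|a| \geq |b|$. Then $|\ominus b| = |b| \leq |a|$, so the $\argmax$ of $(|a|, |\ominus b|)$ includes $a$, which carries the balanced sign $\bullet$; hence $|\chi| \neq 1$ and the sum is balanced, giving $a \beq b$. The symmetric case $b \in \Tzero$, $|b| \geq |a|$ follows by the reflection symmetry, using that $\ominus$ is a semiring homomorphism so $a \ominus b$ is balanced iff $\ominus(a \ominus b) = \ominus a \oplus b = b \ominus a$ is balanced. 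For the reverse implication I would argue by contrapositive or by exhausting the sign cases: if none of the three disjuncts holds, then both $a,b \in \TTpm$ with $a \neq b$, and one checks the tropical sign of $a \oplus (\ominus b)$ is $\oplus$ or $\ominus$ (never $\bullet$, never $\Zero$) by comparing norms — the strictly larger norm wins cleanly, and in the tie case $|a| = |b|$ with $a \neq b$ the two signed terms actually agree after applying $\ominus$ only if $a = b$, a contradiction, so the signs match and the result is signed, not balanced.

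For part~\eqref{item:beq+U}, I would assume $b \in \TTpm$ and show $a \beq b \Leftrightarrow b \in \Uncomp(a)$. The clean route is to use part~\eqref{item:beq+char} together with the definition of $\Uncomp(a)$ in~\eqref{eq:incomparable+elements}. When $a \in \TTpm$ is itself signed, $\Uncomp(a) = \{a\}$, and~\eqref{item:beq+char} forces $a \beq b$ (with both signed) to reduce to $a = b$, matching $b \in \{a\}$. When $a \in \Tzero$, the set of $b$ incomparable to $a$ under~\eqref{eq:extended+partial+order} is exactly the interval $\Uncomp(a) = [\ominus|a|, |a|]$, i.e.\ those $b \in \TTpm$ with $\ominus|a| \leq b \leq |a|$, which by the order~\eqref{eq:order+signed+numbers} is precisely $|b| \leq |a|$; and~\eqref{item:beq+char} with $a \in \Tzero$ says $a \beq b \Leftrightarrow |a| \geq |b|$. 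These coincide.

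I expect the main obstacle to be the careful bookkeeping of the tie case $|a| = |b|$ in the reverse direction of~\eqref{item:beq+char}, where I must verify that two signed numbers of equal norm sum (after the $\ominus$) to a balanced number if and only if their signs genuinely clash, which is exactly the condition $a = b$ failing — so I must make sure the definition of $\chi$ in~\eqref{eq:balanced+addition} is applied correctly to distinguish $a = b$ (signs agree, sum is signed) from $a \ne b$ with equal norm (signs clash, sum balanced). The reflection symmetry via the homomorphism property of $\ominus$ should let me avoid duplicating the two symmetric balanced cases, keeping the argument short.
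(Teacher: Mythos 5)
Your overall strategy---unfolding $a\beq b \Leftrightarrow a\ominus b\in\Tzero$ through the addition rule~\eqref{eq:balanced+addition}, then deriving part~\eqref{item:beq+U} from part~\eqref{item:beq+char} and the definition of $\Uncomp$---is sound, and matches the spirit of the paper, which states the lemma as immediate from the definitions and gives no written proof. Part~\eqref{item:beq+U} as you argue it is correct. But your proof of part~\eqref{item:beq+char} contains one repairable misstatement and one genuine gap. The misstatement: you assert (twice) that the presence of a balanced term among the maximizers forces $|\chi|\neq 1$. That is false: if $a\in\Tzero$ and $|a|>|b|$, the argmax is $\{a\}$ alone, so $\chi=\{\bullet\}$ and $|\chi|=1$. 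Your conclusion survives only because in that case the \emph{first} branch of~\eqref{eq:balanced+addition} returns the argmax element $a$ itself, which is balanced; the correct observation is that $\bullet\in\chi$ yields a balanced sum under either branch, not that it forces the second branch.

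The genuine gap is in the converse direction. You claim that if none of the three disjuncts holds, then both $a,b\in\TTpm$. This is not true: take $a=\bullet 1\in\Tzero$ and $b=2\in\TTpm$. Then the first disjunct fails since $|a|=1<2=|b|$, the second fails since $b\notin\Tzero$, and $a\neq b$; yet $a\notin\TTpm$. So your case analysis omits the mixed cases in which exactly one of $a,b$ is genuinely balanced and has strictly smaller absolute value than the other, signed, element. These cases are easy to close---the signed term of strictly larger absolute value is the unique maximizer, so $a\ominus b$ is signed and nonzero, hence not in $\Tzero$, exactly as the contrapositive requires---but as written your proof never reaches them, and the false intermediate claim is what hides them. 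Replace that claim by a three-way split (both signed; one genuinely balanced and the other signed of strictly larger absolute value; both genuinely balanced, which is vacuous since then one of the first two disjuncts must hold) and the argument is complete.
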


\begin{remark}\label{rem:not+equivalence}
  Note that 
$\beq$ is {\em not} an equivalence relation, as the example
  \[
  1 \beq \bullet 6, \quad \bullet 6 \beq 3, \quad \text{ but } 1 \not\beq 3
  \]
  shows.
\end{remark}

We introduce the binary relation
\begin{equation}
  x \teq y \qquad \Leftrightarrow \quad   x > y \text{ or } x \beq y\quad
\Leftrightarrow  \qquad x \ominus y \in \Tgeq \cup \Tzero \enspace .
\end{equation}
Note that $a \beq b$ is equivalent to $(a\teq b)\vee (a\seq
b)$. Remark~\ref{rem:not+equivalence} shows that $\teq$ is {\em not} a
partial order.

Recall from Remark~\ref{rem:reformulate+inequality+order} that bringing terms to the other side of a non-strict inequality with `$\geq$' is not valid in general. 
The next lemma shows, among other simple properties, that `$\teq$' is
compatible with the semiring operations\iffull.\else; the proof is deferred to the Appendix.\fi

\goodbreak
\begin{restatable}{lemma}{bpteq} \label{lem:basic+properties+teq}
  Let $a,b,c,d \in \TSS$.
  \begin{enumerate}[(a)]
  \item \label{item:teq+other+side} $a \oplus c \teq b \Leftrightarrow a \teq b \ominus c$
  \item \label{item:teq+summing} $a \teq b \quad \wedge \quad c \teq d \quad \Rightarrow \quad a \oplus c \teq b \oplus d$.
  \item \label{item:restricted+signed+transitivity} If $c \in \TTpm$, then $b \teq c$ and $c \teq a$ imply $b \teq a$.
  \item \label{item:teq+affine+monotony} $a \teq b$ implies $c \odot a
    \oplus d \teq c \odot b \oplus d$ for $c \in \Tgeq$ and $c \odot b
    \oplus d \teq c \odot a \oplus d$ for $c \in \Tleq$.
  \end{enumerate}
\end{restatable}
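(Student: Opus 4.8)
The plan is to reduce all four parts to the defining reformulation $x \teq y \Leftrightarrow x \ominus y \in \Tgeq \cup \Tzero$, supported by two elementary facts that I would record first. The first is that $\ominus$ is an involution and distributes over $\oplus$, so differences may be rearranged freely; the second is the key \emph{closure fact}: the set $\Tgeq \cup \Tzero$ of non-negative-or-balanced elements is closed under $\oplus$. This is immediate from~\eqref{eq:balanced+addition}, since $\tsgn(x \oplus y)$ is always one of $\tsgn(x)$, $\tsgn(y)$, or $\bullet$; hence if neither summand lies in $\Tlt$ then neither does the sum. I would also note the multiplicative analogue needed for~\ref{item:teq+affine+monotony}: an element of $\Tgeq \cup \Tzero$ multiplied by an element of $\Tgeq$ remains in $\Tgeq \cup \Tzero$, by the sign table of $\odot$ (and symmetrically $\Tleq \cup \Tzero$ times $\Tleq$ lands in $\Tgeq \cup \Tzero$).

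For~\ref{item:teq+other+side} I would verify the identity $(a \oplus c) \ominus b = a \ominus (b \ominus c)$ in $\TSS$ using involutivity and distributivity of $\ominus$ over $\oplus$ together with commutativity of $\oplus$; both sides of the claimed equivalence then reduce to the \emph{same} membership condition in $\Tgeq \cup \Tzero$. Part~\ref{item:teq+summing} follows the same template: $\ominus(b \oplus d) = \ominus b \oplus \ominus d$ gives $(a \oplus c) \ominus (b \oplus d) = (a \ominus b) \oplus (c \ominus d)$, and since both summands lie in $\Tgeq \cup \Tzero$ by hypothesis, the closure fact places their sum there too. For~\ref{item:teq+affine+monotony} I would first note reflexivity $d \teq d$ (because $d \ominus d = \bullet|d| \in \Tzero$), so by~\ref{item:teq+summing} it suffices to prove the scaling statement $c \odot a \teq c \odot b$ for $c \in \Tgeq$ (and the reversed statement for $c \in \Tleq$). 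Here the identity $c \odot a \ominus c \odot b = c \odot (a \ominus b)$, obtained from distributivity of $\odot$ over $\oplus$ and $\ominus(c \odot b) = c \odot \ominus b$, reduces the claim to the multiplicative closure fact.

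The real work is~\ref{item:restricted+signed+transitivity}, and the hypothesis $c \in \TTpm$ is essential. The naive idea of adding $b \teq c$ and $c \teq a$ via~\ref{item:teq+summing} only yields $b \oplus \bullet|c| \teq a$, and the \emph{balanced} term $\bullet|c| = c \ominus c$ is precisely what destroys transitivity in general --- it is the mechanism behind the failure exhibited in Remark~\ref{rem:not+equivalence}. So the balanced middle term must be controlled through the magnitude of $c$. I would first observe that $\teq$ enjoys the reflection symmetry $x \teq y \Leftrightarrow \ominus y \teq \ominus x$ (since $x \ominus y = (\ominus y) \ominus (\ominus x)$), which lets me assume $c \in \Tgeq$. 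The case $c = \Zero$ is immediate from the closure fact, as $b \teq \Zero$ and $\Zero \teq a$ say exactly $b \notin \Tlt$ and $\ominus a \notin \Tlt$, whence $b \ominus a = b \oplus \ominus a \notin \Tlt$. For $c \in \Tgt$ I would extract from~\eqref{eq:balanced+addition} the clean characterizations that $b \teq c$ holds iff $b \in \Tgt \cup \Tzero$ with $|b| \geq |c|$, while $c \teq a$ \emph{fails} only when $a \in \Tgt$ with $|a| > |c|$. I then split on $a$: if $a \notin \Tgt$ then $\ominus a \notin \Tlt$ and the closure fact gives $b \ominus a \notin \Tlt$; if $a \in \Tgt$ the characterization of $c \teq a$ forces $|a| \leq |c| \leq |b|$, so in $b \oplus \ominus a$ the term $b \in \Tgt \cup \Tzero$ dominates or ties $\ominus a$, again landing in $\Tgeq \cup \Tzero$. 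This yields $b \teq a$ in every case, and $c \in \Tlt$ follows from the $c \in \Tgt$ case by the reflection symmetry.

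I expect~\ref{item:restricted+signed+transitivity} to be the main obstacle: it is the only part confronting the relation's lack of transitivity directly, and it is exactly why the statement restricts to a signed middle element. The remaining three parts are formal, resting solely on the closure facts and the algebraic identities for $\ominus$ and $\odot$.
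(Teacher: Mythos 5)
Your proposal is correct, and for parts (a), (b), and (d) it coincides with the paper's proof essentially verbatim: both unfold $\teq$ into membership of the difference in $\Tgeq \cup \Tzero$, prove (b) by closure of that set under $\oplus$, and reduce (d) to the scaling identity $c \odot (a \ominus b) = c \odot a \ominus c \odot b$ combined with (b) applied to $d \teq d$. The genuine divergence is in part (c), which both you and the paper treat as the substantive claim. The paper argues by contradiction: assuming $b \ominus a \in \Tlt$, it splits into the cases $|b| > |a|$, $|a| > |b|$, and $|a| = |b|$, and in each case uses the signedness of $c$ to force $c \in \Tlt$ and $c \in \Tgt$ simultaneously, or a magnitude contradiction along the chain $|c| \geq |b| > |a|$. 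You instead give a direct proof: the reflection symmetry $x \teq y \Leftrightarrow \ominus y \teq \ominus x$ normalizes to $c \in \Tgeq$, the hypothesis $b \teq c$ (for $c \in \Tgt$) is characterized as $b \in \Tgt \cup \Tzero$ with $|b| \geq |c|$, the failure of $c \teq a$ is characterized as $a \in \Tgt$ with $|a| > |c|$, and a final split on whether $a \in \Tgt$ finishes via the closure fact or the magnitude chain $|a| \leq |c| \leq |b|$. I checked your two characterizations against the definition of $\oplus$ and they are correct, and the reflection step does correctly reduce $c \in \Tlt$ to $c \in \Tgt$ (with the roles of $a$ and $b$ swapped), so there is no gap. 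The two routes are of comparable length; yours buys a cleaner logical structure (direct rather than by contradiction, and the reflection trick halves the sign cases), while the paper's contradiction argument avoids stating explicit characterizations and keeps $a$ and $b$ symmetric throughout.
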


\newcommand{\proofbpteq}{\begin{proof}
  \eqref{item:teq+other+side} The claim follows directly from the definition and the properties of the semiring $\TSS$. 

  \smallskip

  \eqref{item:teq+summing} Using the definition, we obtain $a \ominus b, c \ominus d \in \Tgeq \cup \Tzero$. This implies already $a \oplus c \ominus b \ominus d \in \Tgeq \cup \Tzero$.

  \smallskip
  
  \eqref{item:restricted+signed+transitivity} 
For a contradiction, assume $b\not\teq a$, that is, $b\ominus a\in
\Tlt$.

\noindent{\sl Case I}: $|b|>|a|$. In this case, $b\in \Tlt$. 
 Since $b\ominus c\in \Tgeq \cup
\Tzero$, it follows that $c\in \Tlt$ and $|c|\ge |b|$, using $c \in \TTpm$. We now get a
contradiction to $c\ominus a\in \Tgeq \cup
\Tzero$, since $|c|\ge |b|>|a|$.

\noindent{\sl Case II}: $|a|>|b|$. This case follows by an analogous
argument. With $a\in \Tgt$, the condition $c\ominus a\in \Tgeq \cup
\Tzero$ implies $c\in \Tgt$ and $|c|\ge |a|>|b|$. This contradicts
$b\ominus c\in \Tgeq \cup
\Tzero$.

\noindent{\sl Case III}: $|a|=|b|$. In this case, we must have $b\in
\Tlt$ and $a\in \Tgt$. 
We thus obtain  $c\in \Tlt$ as
in case I, but also $c\in \Tgt$ as in case II, a contradiction.

  \smallskip

  \eqref{item:teq+affine+monotony} The expression $c \odot (a \ominus
  b) = c \odot a \ominus c \odot b$ is in $\Tgeq$ for $c \in \Tgeq$
  and in $\Tleq$ for $c \in \Tleq$. Now, the statement follows
  from~\eqref{item:teq+summing} with $d \teq d$.

 \end{proof}
}
\iffull\proofbpteq
\fi

\section{Tropical convexity of signed numbers}

\subsection{Signed tropical convex combinations}
Let us recall the notation that for a matrix $A\in \Tgeq^{d \times n}$,
and a vector $x\in \Tgeq^n$, we denote by 
$A\odot x\in \Tgeq^n$ the tropical matrix product.
The \emph{tropical convex hull} $\tconv(A)$ of the columns of a matrix
$A\in  \Tgeq^{d \times n}$, studied in~\cite{BriecHorvath:2004,CohenGaubertQuadrat:2005,DevelinSturmfels:2004}, is defined as
\begin{equation} \label{eq:positive+convex+hull}
 \tconv(A) = \SetOf{A \odot x}{ x\in \Tgeq^n, \bigoplus_{j \in [n]} x_j = 0 } \subseteq \Tgeq^d
\enspace .
\end{equation}
%
In this definition it is essential that all columns of $A$ lie in the
non-negative orthant $\Tgeq^d$. For general matrices in
$\TTpm^{d\times n}$, the product $A\odot x$ may contain balanced entries.
We now extend the notion of the tropical convex hull to
$\TTpm^d$.
Note that we switch freely between a matrix and its set of columns. 
%
%

\begin{figure} 
  \begin{tikzpicture}

  \Koordinatenkreuz{-2.1}{2.1}{-1.2}{2.3}{$x_1$}{$x_2$}

  \coordinate (A) at (1.4,1.4){};
  \coordinate (B) at (-1.7,-0.9){};
  \coordinate (C) at (-1.0,-0.5){};
  \coordinate (AB) at (-1.7,1.4){};
  \coordinate (AC1) at (-1.0,1.0){};
  \coordinate (AC2) at (1.0,1.0){};
  \coordinate (BC) at (-1.3,-0.5){};

  \draw[fill,Linesegment,opacity=0.4] (A) -- (AB) -- (B) -- (BC) -- (C) -- (AC1) -- (AC2) -- cycle;

  \node[Endpoint] at (A){};
  \node[Endpoint] at (B){};
  \node[Endpoint] at (C){};
   
\end{tikzpicture}
  \caption{The signed tropically convex hull of $\{(3,3),(\ominus 1, \ominus 0),(\ominus 4, \ominus 2)\}$. We omit labels for the axes as the origin is $(-\infty,-\infty)$ and therefore infinitely far away. }
  \label{fig:first+hull}
\end{figure}
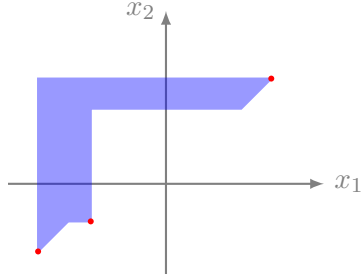

\begin{definition}[Inner hull] \label{def:inner+hull}
  The \emph{(signed) tropical convex hull} of the columns of the
  matrix $A \in \TTpm^{d \times n}$ is defined as
  \begin{equation} \label{eq:hull+union+intervals}
  \tconv(A) =\SetOf{z\in \TTpm^d}{z\beq A\odot x, x\in \Tgeq^n, \bigoplus_{j \in [n]} x_j = 0 } \subseteq \TTpm^d\enspace .
  \end{equation}
  Such a set is a \emph{(signed) tropical polytope}. 
  The tropical convex hull of an arbitrary set $M \subseteq \TTpm^d$ is the union
  \[
  \tconv(M) = \bigcup_{V \subseteq M, V \text{ finite }} \tconv(V) \enspace .
  \]
A subset $M \subseteq \TTpm^d$ is \emph{tropically convex} if
$M=\tconv(M)$.
\end{definition}
This hull construction generalizes~\eqref{eq:positive+convex+hull}
because if $A\in \Tgeq^{d\times n}$ then $A\odot x\in \TTpm^d$. In this case, Lemma~\ref{lem:basic+properties+beq}\eqref{item:beq+char} implies that $z\beq
A\odot x$ holds only for $z=A\odot x$.

Using Lemma~\ref{lem:basic+properties+beq}\eqref{item:beq+U}, we can
write \eqref{eq:hull+union+intervals} equivalently as 
\begin{equation}\label{eq:hull+U}
   \tconv(A) =\bigcup \SetOf{\Uncomp(A\odot x)}{ x\in \Tgeq^n, \bigoplus_{j \in [n]} x_j = 0 } \subseteq \TTpm^d\enspace .
  \end{equation}

\begin{example} \label{ex:first+convex+hull+critical+points}
  The combinations of the tropical convex hull depicted in Figure~\ref{fig:first+hull}, where balanced numbers occur, can be calculated via
  \begin{equation*}
    \begin{aligned}
      (-3) \odot \pvec{3}{3} \oplus \pvec{\ominus 1}{\ominus 0} &= \pvec{\ominus 1}{\bullet 0} \;,\qquad 
      (-2) \odot \pvec{3}{3} \oplus \pvec{\ominus 1}{\ominus 0} &=& \pvec{\bullet 1}{1} \\
      \pvec{3}{3} \oplus (-1) \odot \pvec{\ominus 4}{\ominus 2} &= \pvec{\bullet 3}{3} \;, \qquad
      (-1) \odot \pvec{3}{3} \oplus \pvec{\ominus 4}{\ominus 2} &=& \pvec{\ominus 4}{\bullet 2} \enspace .
    \end{aligned}
  \end{equation*}
      A more precise way, how these points can be used to determine the signed tropical convex hull, via the tropical convex hull of the intersection with each orthant is given in Theorem~\ref{thm:generators+for+all+orthants}. 
\end{example}

\begin{remark}
  There is no unique minimal generating set in the usual sense as one can see from $\tconv((0,0),(\ominus 0, \ominus 0)) = \tconv((0,\ominus 0),(\ominus 0, 0))$.
\end{remark}

We now derive some elementary properties of this convexity notion. The
following are immediate from the definition, as~\eqref{eq:hull+U} is just a componentwise construction. 

\begin{proposition} \label{prop:immediate+properties+convex+sets} \quad
  \begin{enumerate}[(a)] 
  \item \label{item:intersection+convex+sets}
  The intersection of tropically convex sets is tropically convex.
  \item \label{item:projection+convex+sets}
  The coordinate projection of tropically convex sets is tropically convex.
  \end{enumerate}
\end{proposition}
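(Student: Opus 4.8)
The plan is to reduce both parts to two soft facts about the hull operator. First, $\tconv$ is monotone: if $M\subseteq M'$ then $\tconv(M)\subseteq\tconv(M')$, which is immediate since $\tconv$ of an arbitrary set is defined as the union of $\tconv(V)$ over finite subsets $V$. Second, $M\subseteq\tconv(M)$ always holds, because for a single point $z\in\TTpm^d$ we have $\tconv(\{z\})=\Uncomp(z\odot 0)=\Uncomp(z)=\{z\}$, using that $z$ is signed so each factor $\Uncomp(z_i)$ is a singleton. Consequently a set $M$ is tropically convex precisely when $\tconv(M)\subseteq M$, so in both parts only this one inclusion needs checking.

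For part~\eqref{item:intersection+convex+sets} I would argue directly (the same argument works verbatim for an arbitrary family). Given tropically convex $M_1,M_2$, monotonicity applied to $M_1\cap M_2\subseteq M_i$ yields $\tconv(M_1\cap M_2)\subseteq\tconv(M_i)=M_i$ for $i=1,2$, hence $\tconv(M_1\cap M_2)\subseteq M_1\cap M_2$. Together with the automatic reverse containment this gives convexity of $M_1\cap M_2$.

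For part~\eqref{item:projection+convex+sets} the one substantive step is a commutation lemma: for the coordinate projection $\pi$ onto a subset $S\subseteq[d]$ of coordinates and any finite $V\subseteq\TTpm^d$,
\[
\pi\bigl(\tconv(V)\bigr)=\tconv\bigl(\pi(V)\bigr)\enspace .
\]
Here the componentwise description~\eqref{eq:hull+U} does the work: writing $\tconv(V)=\bigcup_x\Uncomp(V\odot x)=\bigcup_x\prod_{i\in[d]}\Uncomp\bigl((V\odot x)_i\bigr)$ over $x$ in the tropical simplex, the projection commutes with the union and drops exactly the factors outside $S$, and $(V\odot x)_i=(\pi(V)\odot x)_i$ for $i\in S$ since this entry depends only on row $i$ of the matrix. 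This gives $\pi(\tconv(V))=\bigcup_x\Uncomp(\pi(V)\odot x)=\tconv(\pi(V))$. Granting the lemma, I finish as follows: take $w\in\tconv(\pi(M))$, so there is a finite $V'\subseteq\pi(M)$ with $w\in\tconv(V')$; choosing a preimage in $M$ for each column of $V'$ produces a finite $V\subseteq M$ with $\pi(V)=V'$, whence $w\in\tconv(\pi(V))=\pi(\tconv(V))$. Picking $z\in\tconv(V)$ with $\pi(z)=w$ and using monotonicity plus convexity, $z\in\tconv(M)=M$, so $w=\pi(z)\in\pi(M)$.

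The only genuinely nonroutine point is the commutation lemma, and even there the difficulty is pure bookkeeping rather than any new idea: one must verify that projecting the box $\prod_{i\in[d]}\Uncomp((V\odot x)_i)$ returns $\prod_{i\in S}\Uncomp((V\odot x)_i)$, i.e.\ that the image of a product set is the product of the retained factors. This requires each coordinate interval to be nonempty, which holds because $\Uncomp(a)$ is always either a nonempty interval $[\ominus|a|,|a|]$ or a singleton. All other steps are formal manipulations with monotonicity and the two soft facts above.
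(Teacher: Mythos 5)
Your proposal is correct and takes essentially the same route as the paper: the paper declares both parts immediate from the fact that \eqref{eq:hull+U} is a componentwise construction, and your argument is exactly a fleshed-out version of that observation — monotonicity of $\tconv$, singleton hulls of signed points, and the commutation of coordinate projection with the product-of-intervals structure $\Uncomp(A\odot x)=\prod_i\Uncomp((A\odot x)_i)$. The details you supply (nonemptiness of each $\Uncomp$ factor, the fact that $(V\odot x)_i$ depends only on row $i$) are precisely the bookkeeping the paper leaves implicit, so there is no gap.
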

    
Next, we show that convexity follows already by showing the
containment of line segments
(Proposition~\ref{prop:generation+by+line+segments}), and that 
$\tconv(.)$ is a closure operator, i.e., 
the convex hull of a set is a tropically convex set
(Proposition~\ref{prop:convex+hull+closure}). The following technical
lemma will be needed for these proofs. \iffull\else The next three
proofs are deferred to the Appendix.\fi

\begin{restatable}{lemma}{beqcomb}\label{lem:beq+comb} \quad
\begin{enumerate}[(a)]
\item \label{item:u+reduce} Let $a\in \TSS$, $b\in \TTpm$, and $z\in \Uncomp(a\oplus
  b)$. Then there exists an $a'\in \Uncomp(a)$ such that $z\in
  \Uncomp(a'\oplus b)$.
 \item \label{item:u+contain} 
If $a \in \Uncomp(x)$, $b\in \Uncomp(y)$, and $c\in \TTpm$, then
$\Uncomp(c\odot a\oplus b)\subseteq \Uncomp(c\odot x\oplus y)$.
\end{enumerate}
\end{restatable}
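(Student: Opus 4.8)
The plan is to reduce both parts to the elementary behaviour of $\oplus$ on norms and signs. Recall that for $w\in\TSS$ we have $\Uncomp(w)=\{w\}$ if $w\in\TTpm$ and $\Uncomp(w)=[\ominus|w|,|w|]$ if $w$ is balanced, so in all cases every $v\in\Uncomp(w)$ satisfies $|v|\le|w|$. Moreover $|u\oplus v|=|u|\oplus|v|$, and by~\eqref{eq:balanced+addition} the sum $u\oplus v$ is signed exactly when the larger norm is attained with a single sign, and is balanced otherwise. Thus everything comes down to deciding, for a given configuration of the norms $|a|,|b|,|z|$ (resp.\ $|p|,|P|,|Q|$), whether the relevant sum is a point or a full symmetric interval.

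For part~\eqref{item:u+reduce} I would argue by cases. If $a\in\TTpm$, then $\Uncomp(a)=\{a\}$, and $a'=a$ works since $\Uncomp(a'\oplus b)=\Uncomp(a\oplus b)\ni z$. Otherwise $a$ is balanced, so $\Uncomp(a)=[\ominus|a|,|a|]$ and $|z|\le|a\oplus b|=|a|\oplus|b|$. If $|b|>|a|$ then $a\oplus b=b$, forcing $z=b$; here $a'=\Zero\in\Uncomp(a)$ gives $a'\oplus b=b$, so $z\in\Uncomp(b)$. If $|b|\le|a|$ and $|z|>|b|$, I take $a'=z$: then $|z|\le|a|$ so $a'\in\Uncomp(a)$, and $|a'|=|z|>|b|$ makes $a'\oplus b=z$ signed, whence $z\in\Uncomp(a'\oplus b)=\{z\}$. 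Finally, if $|b|\le|a|$ and $|z|\le|b|$, I take $a'=\ominus b$: then $|\ominus b|=|b|\le|a|$ so $a'\in\Uncomp(a)$, and $a'\oplus b=\ominus b\oplus b=\bullet|b|$ is balanced, giving $\Uncomp(a'\oplus b)=[\ominus|b|,|b|]\ni z$. The genuinely nonobvious case is the last one: a ``small'' target $z$ can only be recovered by deliberately manufacturing the balanced sum $\ominus b\oplus b$ whose incomparability interval covers all of $[\ominus|b|,|b|]$.

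For part~\eqref{item:u+contain} I would first record a multiplicative observation: if $a\in\Uncomp(x)$ and $c\in\TTpm$, then $c\odot a\in\Uncomp(c\odot x)$, because $\odot$ adds norms and never turns signed factors into a balanced product, so $|c\odot a|=|c|+|a|\le|c|+|x|=|c\odot x|$, with $c\odot a$ signed precisely when $x$ is. Next I would prove the key sub-lemma: for $p\in\TTpm$ with $p\in\Uncomp(P)$ and any $Q\in\TSS$, $\Uncomp(p\oplus Q)\subseteq\Uncomp(P\oplus Q)$. If $P\in\TTpm$ then $p=P$ and this is an equality; if $P$ is balanced, then for $|Q|>|P|$ both sides equal $\Uncomp(Q)$, while for $|Q|\le|P|$ the balanced $P$ absorbs $Q$ so that $P\oplus Q$ is balanced of norm $|P|$ and $\Uncomp(P\oplus Q)=[\ominus|P|,|P|]$; since $|p\oplus Q|=|p|\oplus|Q|\le|P|$, every element of $\Uncomp(p\oplus Q)$ has norm at most $|P|$ and so lies in $[\ominus|P|,|P|]$. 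Granting the sub-lemma, part~\eqref{item:u+contain} follows by two applications: first with $p=c\odot a$, $P=c\odot x$, $Q=b$ (using the observation) to get $\Uncomp(c\odot a\oplus b)\subseteq\Uncomp(c\odot x\oplus b)$, then with $p=b$, $P=y$, $Q=c\odot x$ together with commutativity of $\oplus$ to get $\Uncomp(c\odot x\oplus b)\subseteq\Uncomp(c\odot x\oplus y)$; composing the two inclusions yields the claim. I expect the main obstacle in both parts to be exactly this balanced bookkeeping---the equal-norm ties and the way a balanced term absorbs everything of no larger norm---together with checking the degenerate value $\Zero$, which is routine but must not be overlooked.
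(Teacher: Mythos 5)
Your proposal is correct in both parts. For part (a) you follow essentially the same route as the paper: the same case split ($a$ signed; $a$ balanced with $|b|>|a|$; then $|z|>|b|$ versus $|z|\le|b|$) and the same witnesses $a'=z$ and $a'=\ominus b$ in the two nontrivial subcases; the only cosmetic difference is that in the subcase $|b|>|a|$ you pick $a'=\Zero$ where the paper picks $a'=|a|$, and both choices work. For part (b), however, you take a genuinely different and arguably cleaner route. The paper argues directly by cases on the sum: if $c\odot x\oplus y$ is balanced the norm bound $|c\odot a\oplus b|\le|c\odot x\oplus y|$ settles it, and if not, it analyzes which of $x,y$ is balanced and shows both sums collapse to the same signed value. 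You instead factor the statement into two reusable ingredients: the compatibility of $\Uncomp$ with signed scaling ($a\in\Uncomp(x)$, $c\in\TTpm$ imply $c\odot a\in\Uncomp(c\odot x)$), and a one-argument monotonicity sub-lemma (for signed $p\in\Uncomp(P)$ and arbitrary $Q\in\TSS$, one has $\Uncomp(p\oplus Q)\subseteq\Uncomp(P\oplus Q)$), which you then apply twice, once in each argument, composing the inclusions. Your sub-lemma's case analysis ($P$ signed; $P$ balanced with $|Q|>|P|$; $P$ balanced with $|Q|\le|P|$) is correct, including the key absorption fact that a balanced $P$ of maximal norm makes $P\oplus Q$ balanced of norm $|P|$. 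The trade-off: the paper's argument is shorter and self-contained, while your decomposition isolates the actual mechanism (one-coordinate substitution inside $\Uncomp$) as a standalone tool that would also streamline other arguments in the paper that repeat similar balanced-bookkeeping, such as the proof of Proposition~\ref{prop:convex+hull+closure}.
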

\newcommand{\proofbeqcomb}{
\begin{proof}
\eqref{item:u+reduce} If $a\in \TTpm$, then $a'=a$ satisfies the requirements.
For the rest of the proof, we assume $a\in \Tzero$.
If $|b|>|a|$, then we can set $a'=|a|$.
In this case, $a'\oplus b=a\oplus b=b\in \TTpm$.
Consider now the case $|a|\ge |b|$, which implies $a\oplus b=a$.
Then $z\in \Uncomp(a \oplus b)$ if and only if $|z|\le |a|$.
For $|a|\ge |z| > |b|$, we set $a'=z$.
If $|a|\ge |b|\ge |z|$, then we set $a'=\ominus b$. In both
  cases it is easy to see that  $z\in
  \Uncomp(a'\oplus b)$.

\smallskip
\eqref{item:u+contain} Note that
$|a|\le |x|$ and $|b|\le |y|$, and consequently, $|c\odot a\oplus
b|\le |c\odot x\oplus y|$. If $c\odot x\oplus y$ is balanced, then the
claim follows: $\Uncomp(c\odot x\oplus y)$ contains all $r\in
\TTpm$ with $|r|\le |c\odot x\oplus y|$; this holds for all  $r\in\Uncomp(c\odot a\oplus
b)$.

Hence, assume that $c\odot x\oplus y$ is not balanced.
In particular, $x$ or $y$ is not balanced. If both $x,y\in \TTpm$, then $a=x$
and $b=y$ and thus the claim is immediate. The remaining case is when
exactly one of $x$ and $y$ is balanced.
Let us  assume $y\in \TTpm$;
the case $x\in \TTpm$ follows similarly. Now we have $b=y$, and we
must also have $|y|>|c\odot x|$ as otherwise $c\odot x\oplus y$ would
be balanced. Consequently, $c\odot x\oplus y=y$. On the other hand,
$|a|\le |x|$ and $b=y$ imply $|c\odot a|<|b|$, and therefore $c\odot a\oplus
b=y$, and the claim follows.
\end{proof}}

\iffull\proofbeqcomb\fi

\begin{restatable}{proposition}{genlineseg} \label{prop:generation+by+line+segments}
  An arbitrary subset $M \subseteq \TTpm^d$ is tropically convex if and only if $\tconv(\{p,q\}) \subseteq M$ for all $p,q \in M$. 
\end{restatable}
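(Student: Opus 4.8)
The plan is to prove the two implications separately, with the forward one being immediate and the reverse one proved by induction on the number of generators. For the \emph{easy direction}, suppose $M$ is tropically convex, so that $M=\tconv(M)$. For any $p,q\in M$ the pair $\{p,q\}$ is a finite subset of $M$, so by the very definition of $\tconv(M)$ as the union of $\tconv(V)$ over finite $V\subseteq M$ we get $\tconv(\{p,q\})\subseteq \tconv(M)=M$.

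For the \emph{converse}, assume $\tconv(\{p,q\})\subseteq M$ for all $p,q\in M$. Since $\tconv(\{p\})=\Uncomp(p)=\{p\}$ for $p\in\TTpm$, we always have $M\subseteq\tconv(M)$, so it suffices to show $\tconv(M)\subseteq M$; and as $\tconv(M)$ is the union of the $\tconv(V)$ over finite $V\subseteq M$, it is enough to prove $\tconv(V)\subseteq M$ for every finite $V\subseteq M$. I would argue by induction on $|V|$, the cases $|V|\le 2$ being exactly the singleton observation above and the line-segment hypothesis. For the inductive step, write $V=\{v_1,\dots,v_{k+1}\}$ and take $z\in\tconv(V)$, so by~\eqref{eq:hull+U} we have $z\in\Uncomp\!\big(\bigoplus_{j=1}^{k+1}x_j\odot v_j\big)$ for some $x\in\Tgeq^{k+1}$ with $\bigoplus_{j}x_j=0$. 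Set $s=\bigoplus_{j=1}^{k}x_j$. If $s=\Zero$ then $\bigoplus_j x_j\odot v_j=v_{k+1}$ and $z\in\{v_{k+1}\}\subseteq M$; otherwise $s\in\Tgt$, and rescaling so that $s\odot y_j=x_j$ for $j\le k$ (tropical division by the positive real $s$) yields $\bigoplus_{j=1}^k y_j=0$, so that $w:=\bigoplus_{j=1}^k y_j\odot v_j$ is a valid combination with $\Uncomp(w)\subseteq\tconv(\{v_1,\dots,v_k\})\subseteq M$ by the inductive hypothesis. By construction $\bigoplus_{j=1}^{k+1}x_j\odot v_j=(s\odot w)\oplus(x_{k+1}\odot v_{k+1})$.

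Now I would invoke Lemma~\ref{lem:beq+comb}\eqref{item:u+reduce} coordinatewise with $a=s\odot w$ and $b=x_{k+1}\odot v_{k+1}\in\TTpm^d$: from $z\in\Uncomp\!\big((s\odot w)\oplus(x_{k+1}\odot v_{k+1})\big)$ it produces some $a'\in\Uncomp(s\odot w)$ with $z\in\Uncomp\!\big(a'\oplus(x_{k+1}\odot v_{k+1})\big)$. The key observation is that multiplication by the positive scalar $s$ commutes with forming $\Uncomp$, that is, $\Uncomp(s\odot w)=s\odot\Uncomp(w)$ coordinatewise, so I may write $a'=s\odot w'$ with $w'\in\Uncomp(w)\subseteq M$. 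Then $z\in\Uncomp\!\big((s\odot w')\oplus(x_{k+1}\odot v_{k+1})\big)$ exhibits $z$ as a point of $\tconv(\{w',v_{k+1}\})$, since the coefficients satisfy $s\oplus x_{k+1}=\bigoplus_{j}x_j=0$. As $w',v_{k+1}\in M$, the line-segment hypothesis gives $z\in\tconv(\{w',v_{k+1}\})\subseteq M$, completing the induction.

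The main obstacle is that the intermediate combination $w$ of the first $k$ generators may be \emph{balanced}, so it is not itself a point of $\TTpm^d$ and cannot directly serve as an endpoint of a line segment; only $\Uncomp(w)$ is known to lie in $M$. Lemma~\ref{lem:beq+comb}\eqref{item:u+reduce} is precisely the device that selects a genuine signed point $w'\in\Uncomp(w)$ for which $z$ still belongs to the two-point hull. A secondary subtlety is the normalization $\bigoplus_j x_j=0$: factoring out the scalar $s$ is what restores a sum-to-$0$ combination of $v_1,\dots,v_k$ so that the inductive hypothesis applies, while the identity $\Uncomp(s\odot w)=s\odot\Uncomp(w)$ ensures that rescaling back by $s$ does not leave $M$.
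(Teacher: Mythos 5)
Your proof is correct and follows essentially the same route as the paper's: induction on the number of generators, splitting off the last point, and invoking Lemma~\ref{lem:beq+comb}\eqref{item:u+reduce} componentwise to replace the possibly balanced partial combination by a genuine signed point of $M$ before applying the two-point hypothesis. The only cosmetic difference is that the paper normalizes by permuting so that $x_1 = 0$ (making the first $n-1$ coefficients already a valid convex combination), whereas you rescale by the scalar $s = \bigoplus_{j \le k} x_j$ and use the identity $\Uncomp(s \odot w) = s \odot \Uncomp(w)$; both achieve the same normalization.
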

\newcommand{\proofgenlineseg}{
\begin{proof}
  For a tropically convex set, the tropical convex hull of all two-element subsets is contained by definition.
In the converse direction, we show by induction on $n$ that if we
select any $n$ vectors from $M$ as the columns of a matrix $A\in \TTpm^{d\times
  n}$, then $\Uncomp(A\odot x)\subseteq M$ for any $x\in \Tgeq^n$, $\bigoplus_{j \in [n]} x_j = 0$.
 The case $n=2$ follows by the
assumption; consider now $n\ge 3$ and assume that the claim holds for $n-1$.

Let $z\in \Uncomp(A\odot x)$.
  Without loss of generality, we can assume that $x_1=0$. We set $s
  =\bigoplus_{\ell = 1}^{n-1} x_\ell \odot a^{(\ell)} \in \TSS^d$,
  where $a^{(\ell)}$ is the $\ell$-th column of $A$. We let $q =  a^{(n)}$.
Then, $A\odot x=s\oplus x_n\odot q$. 

We can apply Lemma~\ref{lem:beq+comb}\eqref{item:u+reduce} to each
component of $z$, $s$, and $x_n\odot q$. Thus, we obtain a vector
$p\in \Uncomp(s)$ such that $z\in \Uncomp(p\oplus x_n\odot q)$. By
induction, $p\in M$, and thus $z\in \tconv(\{p,q\})\subseteq M$ by the
assumption. This completes the proof.
\end{proof}}
\iffull\proofgenlineseg\fi

\begin{restatable}{proposition}{conhc}\label{prop:convex+hull+closure}
For any matrix $A\in \TTpm^{d\times n}$, the convex hull
$\tconv(A)$ is tropically convex. Consequently, $\tconv(\tconv(A))=\tconv(A)$. 
\end{restatable}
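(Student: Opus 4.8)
The plan is to apply Proposition~\ref{prop:generation+by+line+segments}, which reduces tropical convexity of $M := \tconv(A)$ to verifying that $\tconv(\{p,q\}) \subseteq M$ for every pair $p,q \in M$. So I first fix $p,q \in \tconv(A)$. By the reformulation~\eqref{eq:hull+U} there exist combination vectors $x,y \in \Tgeq^n$ with $\bigoplus_{j} x_j = \bigoplus_{j} y_j = 0$ such that $p \in \Uncomp(A\odot x)$ and $q \in \Uncomp(A\odot y)$. I then take an arbitrary $z \in \tconv(\{p,q\})$; again by~\eqref{eq:hull+U} this means $z \in \Uncomp(\lambda \odot p \oplus \mu \odot q)$ for some $\lambda,\mu \in \Tgeq$ with $\lambda \oplus \mu = 0$. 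The goal is to exhibit a single combination vector of the columns of $A$ whose $\Uncomp$-set contains $z$.

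The natural candidate is $w := \lambda \odot x \oplus \mu \odot y$. Using distributivity of $\odot$ over $\oplus$ in the semiring $\TSS$, one checks componentwise that $A\odot w = \lambda \odot (A\odot x) \oplus \mu \odot (A\odot y)$. Moreover $w \in \Tgeq^n$, since $\Tgeq$ is closed under $\oplus$ and $\odot$, and $\bigoplus_{j} w_j = \lambda \odot 0 \oplus \mu \odot 0 = \lambda \oplus \mu = 0$, so $w$ is a legitimate combination vector. Consequently $\Uncomp(A\odot w) \subseteq \tconv(A) = M$, and it only remains to show $z \in \Uncomp(A\odot w)$.

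This last containment I would establish coordinatewise via Lemma~\ref{lem:beq+comb}\eqref{item:u+contain}. Fix a coordinate $i$. From $p_i \in \Uncomp((A\odot x)_i)$, $q_i \in \Uncomp((A\odot y)_i)$, together with the elementary observation that $\Uncomp$ commutes with scaling by a non-negative scalar (so that $\mu \odot q_i \in \Uncomp(\mu \odot (A\odot y)_i)$), I apply the lemma with $c = \lambda \in \Tgeq \subseteq \TTpm$, $a = p_i$, and $b = \mu \odot q_i$ to obtain
\[
\Uncomp(\lambda \odot p_i \oplus \mu \odot q_i) \subseteq \Uncomp\bigl(\lambda \odot (A\odot x)_i \oplus \mu \odot (A\odot y)_i\bigr) = \Uncomp\bigl((A\odot w)_i\bigr) \enspace .
\]
Taking the product over all coordinates $i$ yields $z \in \Uncomp(\lambda \odot p \oplus \mu \odot q) \subseteq \Uncomp(A\odot w) \subseteq M$, so $\tconv(\{p,q\}) \subseteq M$ and $M$ is tropically convex. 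The final assertion $\tconv(\tconv(A)) = \tconv(A)$ is then immediate from Definition~\ref{def:inner+hull}, since a tropically convex set $M$ satisfies $M = \tconv(M)$ by definition.

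The main obstacle is the bookkeeping in the third paragraph: Lemma~\ref{lem:beq+comb}\eqref{item:u+contain} handles only a single scalar multiplication of the form $c\odot a \oplus b$, whereas I face the two-scalar combination $\lambda \odot p_i \oplus \mu \odot q_i$. The trick is to absorb $\mu$ into the second summand using that $\Uncomp$ is compatible with non-negative scaling, thereby reducing to one application of the lemma; verifying this scaling compatibility and the distributivity identity $A\odot w = \lambda \odot (A\odot x) \oplus \mu \odot (A\odot y)$ are the small but essential technical points, and one must also take care that the degenerate cases $\lambda = \Zero$ or $\mu = \Zero$ are absorbed gracefully by these same identities.
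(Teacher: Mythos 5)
Your proof is correct and follows essentially the same route as the paper: reduction to line segments via Proposition~\ref{prop:generation+by+line+segments}, forming the combined weight vector from the two combination vectors, and concluding coordinatewise with Lemma~\ref{lem:beq+comb}\eqref{item:u+contain} plus semiring distributivity. The only difference is cosmetic: the paper normalizes one coefficient of the segment to $0$ so that the lemma applies verbatim, whereas you keep both scalars $\lambda,\mu$ and absorb $\mu$ via the (correct) observation that $\Uncomp$ is compatible with non-negative scaling.
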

\newcommand{\proofconhc}{
\begin{proof}
Using Proposition~\ref{prop:generation+by+line+segments}, it suffices
to show that if $p,q\in \tconv(A)$, $\lambda\in \Tgeq$, $\lambda\le
0$, then $\Uncomp(p\oplus \lambda\odot q)\subseteq \tconv(A)$. 

Let $x,y\in \Tgeq^n$, $\bigoplus_{j \in [n]} x_j = 0$, $\bigoplus_{j
  \in [n]} y_j = 0$ such that $p\in\Uncomp(A\odot x)$ and $q\in\Uncomp(A\odot y)$.
We let $z=x\oplus\lambda\odot q$; clearly, $z\in \Tgeq^n$ and
$\bigoplus_{j \in [n]} z_j = 0$.
From 
Lemma~\ref{lem:beq+comb}\eqref{item:u+contain}, we obtain
that 
 \[\Uncomp(p\oplus \lambda\odot q)\subseteq \Uncomp\left((A\odot x)\oplus \lambda \odot
(A\odot y)\right)= \Uncomp(A\odot z)\subseteq \tconv(A)\enspace .\]
\end{proof}
}
\iffull\proofconhc\fi


\begin{example} \label{ex:coordinate+hyperplane}
 A {\em (signed) tropical hyperplane} is of the form
  \[
  \Hyp(a) = \SetOf{x \in \TTpm^d}{a \odot x \in \Tzero^d} \enspace .
  \]
It is easy to see that this set is tropically convex if
$|\supp(a)|=1$, where $\supp(a) = \SetOf{i \in [d]}{a_i \neq \Zero}$. Therefore, using
Proposition~\ref{prop:immediate+properties+convex+sets}\eqref{item:intersection+convex+sets}, 
we see that 
for a subset $I \subseteq [d]$ and a point $b \in \TTpm^d$, the set 
  \[
  \SetOf{z \in \TTpm^d}{z_i = b_i \text{ for all } i \in I}
  \]
  is tropically convex.

  On the other hand, if $|\supp(a)|>1$, then $\Hyp(a)$ is {\em not}
tropically convex.
 To see this, let us assume that $\supp(a) \supseteq \{1,2\}$. Then $p = (\ominus a_2, a_1,\Zero,\ldots,\Zero), q = (a_2, \ominus a_1,\Zero,\ldots,\Zero) \in \Hyp(a)$.
  Because of $p \oplus q = (\bullet a_2,\bullet a_1, \Zero,\ldots,\Zero)$, the point $(a_2,a_1, \Zero,\ldots,\Zero)$ is contained in $\tconv(p,q)$.
  However, it is not an element of $\Hyp(a)$.
\end{example}

\begin{example} \label{ex:open+halfspace+convex}
  For a vector $(a_0,a_1,\dots,a_d) \in \TTpm^{d+1}$ we define the \emph{open signed (affine) tropical halfspace}
\begin{equation}
  \HC^+(a) = \SetOf{x \in \TTpm^d}{a \odot \begin{pmatrix} 0 \\ x \end{pmatrix} > \Zero} \enspace .
\end{equation}
An open signed tropical halfspace is tropically convex.
Let $c \in \TTpm^d, c_0 \in \TTpm$, $p,q \in \TTpm^d$ and $\lambda,\mu \in \Tleq$ with $\lambda \oplus \mu = 0$. For $p$ and $q$ contained in the halfspace, we have $c \odot p \oplus c_0 > \Zero$ and $c \odot q \oplus c_0 > \Zero$, and by Lemma~\ref{lem:adding+strict+inequalities},
\begin{equation} \label{eq:example+sum+strict+inequalities}
  c \odot (\lambda \odot p \oplus \mu \odot q) \oplus c_0 = \lambda \odot (c \odot p \oplus c_0) \oplus \mu \odot (c \odot q \oplus c_0) > \Zero \enspace .
  \end{equation}
If $\lambda \odot p \oplus \mu \odot q$ has a balanced component $b \in \Tzero$, then the value of $c \odot (\lambda \odot p \oplus \mu \odot q) \oplus c_0$ cannot depend on this component as it is positive. 
Hence, we can replace that component by an element in $\Uncomp(b)$ and preserve the inequality~\eqref{eq:example+sum+strict+inequalities}. 
\end{example}

\begin{remark} \label{rem:transformation+group}
Let $\mathbb{P} \subset \TTpm^{d \times d}$ be the set of permutation
matrices with $0$ as one and $\Zero$ as zero, and let $\mathbb{D}
\subset \TTpm^{d \times d}$ be the set of matrices with diagonal
entries from $\TTpm$ and $\Zero$ else. Their union generates the
multiplicative \emph{group of signed tropical transformations}
$\STT$. This group is the natural group of transformations which
leaves the combinatorial structure of a subset of $\TTpm^d$
unchanged. It provides a useful tool to simplify technical constructions. 
\end{remark}

\begin{example}
  We want to describe the line segment $\tconv(p,q)$ for two points $p,q \in \TTpm^d$.
  By suitable scaling with elements from $\STT$, we can assume that $p = (0,\dots,0)$, and that the entries of $q$ are ordered by increasing absolute value. 

  Analogous to the description in~\cite{DevelinSturmfels:2004}, one obtains a piecewise-linear structure where the breakpoints are determined by the absolute values of the components of $q$.
  As an additional phenomenon, the line segments flip to another orthant at each tropically negative entry of $q$.
  If the sign changes in $\ell$ coordinates at once, the line segment has dimension $\ell$. 
  We visualize several examples for the two-dimensional case in Figure~\ref{fig:line+segments}. 
  
\end{example}

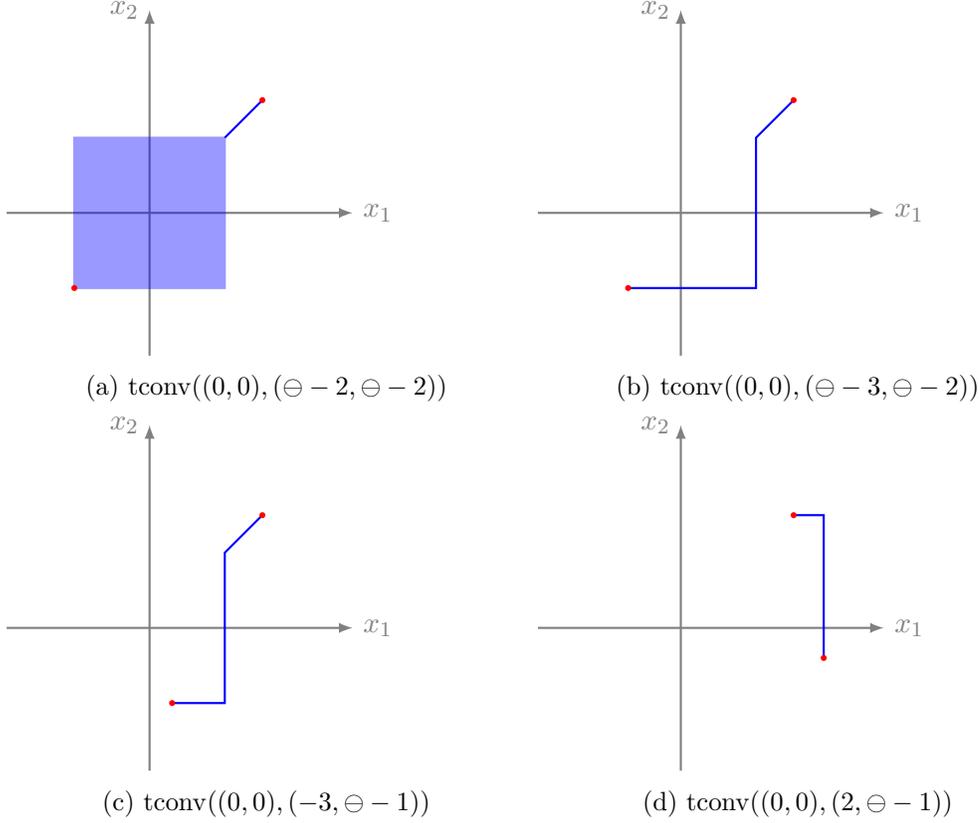
\begin{figure} 
  \newcommand\xmin{-1.9}
  \newcommand\xmax{2.7}
  \newcommand\ymin{-1.9}
  \newcommand\ymax{2.7}
  \begin{subfigure}[h]{0.42\textwidth}
    \begin{tikzpicture}

  \Koordinatenkreuz{\xmin}{\xmax}{\ymin}{\ymax}{$x_1$}{$x_2$}

  \coordinate (center) at (1.5,1.5){};
  \coordinate (pp) at (1,1){};
  \coordinate (mp) at (-1,1){};
  \coordinate (pm) at (1,-1){};
  \coordinate (mm) at (-1,-1){};
  
  \draw[Linesegment] (center) -- (pp);
  \draw[fill,Linesegment,opacity=0.4] (pp) -- (mp) -- (mm) -- (pm) -- cycle;

  \node[Endpoint] at (center){};
  \node[Endpoint] at (mm){};
   
\end{tikzpicture}
    \caption{$\tconv((0,0),(\ominus -2, \ominus -2))$}
    \label{subfig:tropical+exploded+line+segment}
  \end{subfigure}
  \begin{subfigure}[h]{0.42\textwidth}
    \begin{tikzpicture}

  \Koordinatenkreuz{\xmin}{\xmax}{\ymin}{\ymax}{$x_1$}{$x_2$}

  \coordinate (center) at (1.5,1.5){};
  \coordinate (pp) at (1,1){};
  \coordinate (pm) at (1,-1){};
  \coordinate (mm) at (-0.7,-1){};
  
  \draw[Linesegment] (center) -- (pp) -- (pm) -- (mm);

  \node[Endpoint] at (center){};
  \node[Endpoint] at (mm){};
  
\end{tikzpicture}
    \caption{$\tconv((0,0),(\ominus -3, \ominus -2))$}
  \end{subfigure}
  \begin{subfigure}[h]{0.42\textwidth}
    \begin{tikzpicture}

  \Koordinatenkreuz{\xmin}{\xmax}{\ymin}{\ymax}{$x_1$}{$x_2$}
  
  \coordinate (center) at (1.5,1.5){};
  \coordinate (pp) at (1,1){};
  \coordinate (pm) at (1,-1){};
  \coordinate (pm2) at (0.3,-1){};
  
  \draw[Linesegment] (center) -- (pp) -- (pm) -- (pm2);

  \node[Endpoint] at (center){};
  \node[Endpoint] at (pm2){};
  
\end{tikzpicture}
        \caption{$\tconv((0,0),(-3, \ominus -1))$}
  \end{subfigure}
  \begin{subfigure}[h]{0.42\textwidth}
    \begin{tikzpicture}

  \Koordinatenkreuz{\xmin}{\xmax}{\ymin}{\ymax}{$x_1$}{$x_2$}

  \coordinate (center) at (1.5,1.5){};
  \coordinate (pp) at (1.9,1.5){};
  \coordinate (pm) at (1.9,-0.4){};
  
  \draw[Linesegment] (center) -- (pp) -- (pm);

  \node[Endpoint] at (center){};
  \node[Endpoint] at (pm){};
  
\end{tikzpicture}
        \caption{$\tconv((0,0),(2, \ominus -1))$}
  \end{subfigure}
  \caption{Several tropical line segments in the plane}
  \label{fig:line+segments}
\end{figure}


\begin{definition}[Conic hull] 
  The \emph{signed tropical conic hull} of the columns of $A$ is
  \begin{equation} 
    \tcone{A} = \bigcup_{\lambda \in \Tgeq^n} \Uncomp(A \odot \lambda) \enspace .
  \end{equation}
\end{definition}

The definition together with Proposition~\ref{prop:convex+hull+closure} yields the following. 

\begin{corollary}
  The conic hull of a subset of $\TTpm^d$ is tropically convex. 
\end{corollary}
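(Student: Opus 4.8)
The plan is to exhibit the conic hull as an ordinary signed tropical convex hull and then read off convexity from Proposition~\ref{prop:convex+hull+closure}. Concretely, I would set
\[
\hat A = \bigcup_{j \in [n]} \SetOf{t \odot a^{(j)}}{t \in \Tgeq}\enspace ,
\]
the union of the tropical rays spanned by the columns $a^{(j)}$ of $A$ (this set contains $\Zero$, taken with $t = \Zero$), and prove the identity $\tcone{A} = \tconv(\hat A)$. Since the right-hand side is a convex hull, it is tropically convex, and the corollary follows. Because $\hat A$ is infinite, I first record the observation that $\tconv(M)$ is tropically convex for an \emph{arbitrary} set $M$: given $p,q \in \tconv(M)$ there are finite $V_1, V_2 \subseteq M$ with $p \in \tconv(V_1)$ and $q \in \tconv(V_2)$, hence $p,q \in \tconv(V_1 \cup V_2)$; the latter is tropically convex by Proposition~\ref{prop:convex+hull+closure}, so $\tconv(\{p,q\}) \subseteq \tconv(V_1 \cup V_2) \subseteq \tconv(M)$, and Proposition~\ref{prop:generation+by+line+segments} then yields convexity of $\tconv(M)$.

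For the inclusion $\tcone{A} \subseteq \tconv(\hat A)$ I would take $z \in \Uncomp(A \odot \lambda)$ for some $\lambda \in \Tgeq^n$ and consider the finite set $V = \SetOf{\lambda_j \odot a^{(j)}}{j \in [n]} \subseteq \hat A$. Choosing all coefficients equal to $0$, which satisfies the normalization $\bigoplus_j 0 = 0$, forms the combination $\bigoplus_j 0 \odot (\lambda_j \odot a^{(j)}) = A \odot \lambda$, so $z \in \Uncomp(A \odot \lambda) \subseteq \tconv(V) \subseteq \tconv(\hat A)$ (the degenerate $\lambda = \Zero$ gives $z = \Zero \in \hat A$ directly). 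For the reverse inclusion, any $z \in \tconv(\hat A)$ lies in $\tconv(V)$ for some finite $V \subseteq \hat A$ whose elements have the form $t_k \odot a^{(j_k)}$; a convex combination $\bigoplus_k \mu_k \odot (t_k \odot a^{(j_k)})$ regroups, by associativity and distributivity of $\odot$ and $\oplus$, into $A \odot \nu$ with $\nu_j = \bigoplus_{k : j_k = j} \mu_k \odot t_k \in \Tgeq$, so $z \in \Uncomp(A \odot \nu) \subseteq \tcone{A}$.

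The main point to get right is that, in contrast with the classical situation, the conic hull is \emph{not} simply $\tconv(A \cup \{\Zero\})$: the normalization constraint $\bigoplus_j x_j = 0$ in the convex hull pins down the largest coordinate, so appending only the origin captures the scaled-down part of the cone but not the arbitrarily scaled-up points $t \odot (A \odot \lambda)$ with $t > \Zero$. This is exactly why the generating set must be enlarged to the full rays $\hat A$ rather than a single point. Once the identity $\tcone{A} = \tconv(\hat A)$ is in place, the collapse of any $\hat A$-combination back to the form $A \odot \nu$ (and conversely) is routine rewriting using only the semiring axioms, and tropical convexity is inherited for free from Proposition~\ref{prop:convex+hull+closure}.
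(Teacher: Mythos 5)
Your proof is correct, but it takes a genuinely different route from the paper's. The paper treats the corollary as immediate from the definition: $\tcone{A}=\bigcup_{\lambda\in\Tgeq^n}\Uncomp(A\odot\lambda)$ differs from $\tconv(A)$ only in dropping the normalization $\bigoplus_j\lambda_j=0$, and since $\Tgeq^n$ is closed under the operation $x\oplus\lambda\odot y$ appearing in the proof of Proposition~\ref{prop:convex+hull+closure}, that proof (via Lemma~\ref{lem:beq+comb}\eqref{item:u+contain} and Proposition~\ref{prop:generation+by+line+segments}) carries over verbatim to the cone. You instead keep Proposition~\ref{prop:convex+hull+closure} as a black box and reduce the cone to a hull: you prove $\tcone{A}=\tconv(\hat A)$ for the union of rays $\hat A$, after first upgrading Proposition~\ref{prop:convex+hull+closure} to arbitrary (infinite) generating sets by a finite-subset argument combined with Proposition~\ref{prop:generation+by+line+segments}. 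Both steps of the reduction check out: the regrouping of an $\hat A$-combination into $A\odot\nu$ with $\nu_j=\bigoplus_{k:j_k=j}\mu_k\odot t_k\in\Tgeq$ is valid semiring algebra, and you handle the degenerate all-$\Zero$ case. What your route buys: it uses only the \emph{statements} of the two propositions rather than reopening their proofs, it records the independently useful facts that $\tconv(M)$ is tropically convex for arbitrary $M$ and that the cone is the hull of its rays, and your closing observation that $\tcone{A}\neq\tconv(A\cup\{\Zero\})$ in general (the normalization pins the top coefficient at $0$, so scaling up is lost) is a genuine subtlety that the paper leaves implicit. What it costs: length --- the paper's adaptation is essentially a two-line computation, replacing the normalized coefficient vectors by arbitrary ones in $\Tgeq^n$.
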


\subsection{Image of Puiseux lifts} \label{subsec:puiseux+lifts}

The aim of this section is to relate our concept of convexity over $\TTpm$ to convexity over $\RR$. To achieve this, we move to another ordered field, the field of real Puiseux series $\KK = \pseries{\RR}{t}$.
This has proven to be a helpful concept in the study of tropical numbers with signs, see~\cite{Viro:2010,AllamigeonBenchimolGaubertJoswig:2015,JellScheidererYu:2018}.
It is formed by formal Laurent series with exponents in $\RR$ and coefficients in $\RR$. 
The exponent sequence is strictly decreasing and it has no accumulation point. 
This ordered field is equipped with a non-archimedean valuation $\val$ which maps all non-zero elements to their leading exponent and zero to $\Zero = -\infty$.
Additionally, the map $\sgn \colon \KK \to \{\ominus,\Zero,\oplus\}$ yields the sign of an element.
 This gives rise to the \emph{signed valuation} $\sval \colon \KK \to \TTpm$ which maps an element $k \in \KK$ to $\sgn(k) \val(k)$.
It is enough to think of Puiseux series as polynomials in $t$ with arbitrary exponents and coefficients in $\RR$. 

The \emph{tropicalization} of structured sets over $\KK$, i.e., the study of the image of a subset of $\KK^d$ is a technique which is widely used in tropical geometry.
We introduced a concept purely on the tropical side.
We will see in Theorem~\ref{thm:lift+hull}, that signed tropically convex sets are not the image of the valuation of a single convex hull but of a whole union, ranging over the fibers of tropical points.

In some sense, this is complementary to the main result in~\cite{JellScheidererYu:2018}.
While they consider semialgebraic sets over $\KK$ in general, polytopes, i.e., the convex hull of finitely many points in $\KK^d$, can be considered as a special case.
They show that one has to tropicalize all semialgebraic relations fulfilled by a set to describe its image under the signed valuation map.

Recall that for our concept of tropical convexity over $\TTpm$ the image of a single polytope under the signed valuation may not be tropically convex as the Example~\ref{ex:different+convexities} shows.
It is subject to further work to study the special case of polytopes (as semialgebraic sets) from~\cite{JellScheidererYu:2018} and to see which properties such a notion of signed tropical polytopes could provide.




Note that the next statement is valid for more general fields with a
non-trivial non-archimedean valuation $\val$ which is surjective onto
$\Tgeq$. \iffull\else The proof is given in the Appendix.\fi


\begin{restatable}{theorem}{lifthull} \label{thm:lift+hull}
  The signed hull $\tconv(A)$ is the union of the signed valuations for all possible lifts
  \[
  \tconv{A} = \bigcup_{\sval(\mathbf{A}) = A} \sval(\conv(\mathbf{A})) \enspace .
  \]
\end{restatable}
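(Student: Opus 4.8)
We must show
\[
\tconv(A) = \bigcup_{\sval(\mathbf A)=A} \sval(\conv(\mathbf A)),
\]
where $A\in\TTpm^{d\times n}$, the union ranges over all lifts $\mathbf A\in\KK^{d\times n}$ with $\sval(\mathbf A)=A$ entrywise, and $\conv(\mathbf A)$ is the ordinary convex hull over $\KK$ of the columns of $\mathbf A$.

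**Plan of proof.** I would prove the two inclusions separately, and the whole argument reduces to understanding a single column $A\odot x$. Let me set up notation: a point in $\conv(\mathbf A)$ is $\mathbf A\lambda$ with $\lambda\in\KK^n$, $\lambda\ge 0$, $\sum_j\lambda_j=1$, and its signed valuation is $\sval(\mathbf A\lambda)$. The key observation connecting the two worlds is that for each coordinate $i$,
\[
\sval\!\Big(\sum_{j}\mathbf A_{ij}\lambda_j\Big)\in\Uncomp\!\Big(\bigoplus_j A_{ij}\odot\sval(\lambda_j)\Big),
\]
that is, the signed valuation of a classical sum lands in the incomparability interval of the tropical sum of the signed valuations. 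This is the tropical ``law of signs'' for the signed valuation under addition: when no cancellation occurs the valuation is exactly the tropical max and its sign is the sign of the dominating term, and when cancellation of the two leading terms occurs (which is exactly the balanced case $\bigoplus_j A_{ij}\odot\sval(\lambda_j)\in\Tzero$) the true valuation can be anything of absolute value at most that of the balanced entry. In view of the description \eqref{eq:hull+U} of $\tconv(A)$ via $\bigcup\Uncomp(A\odot x)$, this single inclusion is essentially the whole of the ``$\supseteq$''-free bookkeeping.

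\textbf{The inclusion $\bigcup\sval(\conv(\mathbf A))\subseteq\tconv(A)$.} Take any lift $\mathbf A$ and any $\mathbf A\lambda\in\conv(\mathbf A)$. Set $x_j=\sval(\lambda_j)=\val(\lambda_j)\in\Tgeq$ (the $\lambda_j$ are non-negative, so their signed valuations are non-negative), and normalize: since $\sum_j\lambda_j=1$ we have $\max_j\val(\lambda_j)=0$, hence $\bigoplus_j x_j=0$, so $x$ is an admissible coefficient vector. Applying the law-of-signs inclusion coordinatewise gives $\sval(\mathbf A\lambda)\in\Uncomp(A\odot x)\subseteq\tconv(A)$ by \eqref{eq:hull+U}.

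\textbf{The inclusion $\tconv(A)\subseteq\bigcup\sval(\conv(\mathbf A))$.} Conversely, fix $z\in\tconv(A)$, so by \eqref{eq:hull+U} there is an admissible $x\in\Tgeq^n$, $\bigoplus_j x_j=0$, with $z\in\Uncomp(A\odot x)$. The task is to construct one lift $\mathbf A$ of $A$ and one coefficient vector $\lambda\ge 0$, $\sum_j\lambda_j=1$, such that $\sval(\mathbf A\lambda)=z$ exactly. First choose $\lambda_j=c_j\,t^{-x_j}$ for suitable positive reals $c_j$ with $\max_j x_j=0$ handled so that $\sum_j\lambda_j=1$; the natural choice is $\lambda_j$ supported on the indices achieving the maxima. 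Then choose the lift $\mathbf A_{ij}=\tsgn(A_{ij})\,t^{|A_{ij}|}$ in the non-balanced coordinates, and in each coordinate $i$ where $A\odot x$ is \emph{balanced} use the freedom in the leading real coefficients of the competing terms to tune the leading coefficient of $\sum_j\mathbf A_{ij}\lambda_j$ so that, after the two dominant terms cancel, the surviving valuation equals exactly $z_i$ with the correct sign. This is where the construction does real work: in the balanced coordinates we must realize an \emph{arbitrary} prescribed value $z_i$ with $|z_i|\le|(A\odot x)_i|$ and arbitrary sign by perturbing the sub-leading Puiseux data, and we must do so simultaneously across all balanced coordinates using a single $\lambda$ and a single lift.

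\textbf{Main obstacle.} The hard part is precisely this simultaneous realization in the balanced coordinates. One must verify that tuning the sub-leading coefficients to produce the desired cancellation in one coordinate does not destroy the already-correct (non-balanced) valuations in other coordinates, and that two prescribed target values in different balanced coordinates can be realized by a common choice of $\lambda$. The clean way around this is to exploit that $\conv(\mathbf A)$ is taken over all lifts in the union: rather than realizing all targets with one $\lambda$ from one fixed lift, I would choose the lift $\mathbf A$ coordinate-by-coordinate \emph{depending on} $z$, perturbing the real leading coefficients $\mathbf A_{ij}\mapsto \tsgn(A_{ij})(1+\varepsilon_{ij})t^{|A_{ij}|}$ by independent small reals $\varepsilon_{ij}$, so that the decoupled degrees of freedom across coordinates suffice to hit each $z_i$ independently; because the union quantifies over all lifts, I am free to pick whichever perturbed lift makes the single point $\mathbf A\lambda$ valuate to $z$. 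Establishing that the available coefficient freedom is rich enough to hit every $z_i\in\Uncomp((A\odot x)_i)$ — including producing a leading exponent strictly below the tropical maximum, which requires making the top-order terms cancel to high enough order — is the only genuinely delicate step, and I would isolate it as the coordinatewise lemma asserting that every element of $\Uncomp(a\oplus b)$ is $\sval$ of $\alpha+\beta$ for suitable lifts $\alpha,\beta$ of $a,b$, then assemble the full statement from it.
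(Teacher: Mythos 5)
Your first inclusion is correct and is exactly the paper's argument: pull back $x=\sval(\lambda)$, note $\bigoplus_j x_j=0$, and use the coordinatewise ``law of signs'' $\sval\bigl(\sum_j\mathbf A_{ij}\lambda_j\bigr)\in\Uncomp\bigl((A\odot x)_i\bigr)$. The overall plan for the hard inclusion --- fix a monomial $\lambda$ determined by $x$, then choose the lift \emph{depending on} the target $z$ --- is also the paper's plan, and your worry about simultaneity across coordinates is in fact a non-issue, for the simplest possible reason: coordinate $i$ of $\mathbf A\lambda$ depends only on row $i$ of $\mathbf A$, so once $\lambda$ is fixed the rows can be corrected completely independently.

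The genuine gap is the mechanism you propose for those row corrections. You suggest realizing $z_i$ in a balanced coordinate by perturbing the real leading coefficients, $\mathbf A_{ij}\mapsto\tsgn(A_{ij})(1+\varepsilon_{ij})t^{|A_{ij}|}$, keeping $\lambda$ (essentially) monomial. This cannot work: with such a lift, the numerator of $(\mathbf A\lambda)_i$ is a finite sum of real multiples of the monomials $t^{|A_{ij}|+x_j}$, so its valuation lies in the \emph{finite} set $\SetOf{|A_{ij}|+x_j}{j\in[n]}\cup\{\Zero\}$; cancellation among leading terms can only make the valuation drop to the next exponent actually present, never to a value in between, and ``cancel to high enough order'' is meaningless for pure monomials. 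Meanwhile $z_i$ ranges over the continuum $\Uncomp\bigl((A\odot x)_i\bigr)=[\ominus c_i,c_i]$, so almost every target is unreachable by any choice of the $\varepsilon_{ij}$. What is needed (and what the paper does) is an \emph{exact} correction that introduces a new monomial: take $\lambda_j=t^{x_j}\cdot\bigl(\sum_k t^{x_k}\bigr)^{-1}$ --- note that your $\lambda_j=c_j t^{-x_j}$ has the wrong sign in the exponent, since with the paper's convention one needs $\val(\lambda_j)=x_j$ --- pick one index $\ell_i$ attaining $|a_{i\ell_i}|+x_{\ell_i}=c_i$, and add to the single entry $\mathbf a_i^{(\ell_i)}$ the series $\alpha_i=\lambda_{\ell_i}^{-1}\bigl(-\sum_k\tsgn(a_{ik})t^{|a_{ik}|+x_k}+\tsgn(z_i)t^{|z_i|}\bigr)$, which annihilates the entire row sum and replaces it by $\tsgn(z_i)t^{|z_i|}$; the inequalities $|z_i|\le c_i$ and $|a_{ij}|+x_j\le c_i$ ensure $\val(\alpha_i)\le|a_{i\ell_i}|$, so the corrected entry still lifts $a_{i\ell_i}$. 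Your closing coordinatewise lemma (every element of $\Uncomp(a\oplus b)$ is $\sval(\alpha+\beta)$ for suitable lifts $\alpha,\beta$) is true and is the right thing to isolate, but its proof already requires adding such a sub-leading monomial rather than rescaling leading coefficients; with it stated that way, the row-wise assembly is immediate and your appeal to the union over all lifts is then legitimate.
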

\newcommand{\prooflifthull}{
\begin{proof}
  We start with the inclusion `$\supseteq$'. 
  Let $A \in \TTpm^{d \times n}$ and fix a lift $\bA$ of $A$, this means a matrix $\bA \in \KK^{d \times n}$ with $\sval(\bA) = A$.
  For a vector $\blambda \in \KK_{\geq 0}$ with $\sum_{j = 1}^{n} \blambda_i = 1$ the valuation $x = \sval(\blambda)$ is in $\Tgeq^n$ and fulfills $\bigoplus_{j = 1}^{n} x_i = 0$.
  We want to show that $b = \sval(\lambda_1 \cdot \mathbf{a^{(1)}} + \dots + \lambda_n \cdot \mathbf{a^{(n)}}) \in \tconv{A}$.
  For each $i \in [d]$, let $c_i = \max\SetOf{|\sval(\lambda_j \cdot \mathbf{a^{(j)}_i})|}{j \in [n]}$. 
  Furthermore, we define $p = A \odot x = \bigoplus_{j \in [n]} x_j \odot a^{(j)} = \bigoplus_{j \in [n]} \sval(\lambda_j \cdot \mathbf{a^{(j)}})$.
  We fix an $i \in [d]$ and we want to show that $b_i \in \Uncomp(p_i)$.
  Note that $|p_i| = c_i$. 
  If $p_i$ is not balanced, we already have $b_i = p_i$.
  Otherwise, we get $|b_i| \leq c_i$ and consequently $b_i \in [\ominus c_i, c_i]$. This finishes the proof of the inclusion '$\supseteq$'.

  \smallskip

  For the other direction, we fix $b \in \Uncomp(A \odot x)$ for some $x \in \Tgeq^n, \bigoplus_{j \in [n]} x_j = 0$.
  We define
  \[
  \lambda_j = t^{x_j} \cdot  \left(\sum_{k \in [n]}t^{x_k}\right)^{-1} \quad \text{ for each } j \in [n] \enspace .
  \]
  With this, we get $\lambda \geq 0$ and $\sum_{k \in [n]} \lambda_k = 1$.
  
  For each row $i \in [d]$, we denote by $J^+_i$ the set of indices of the positive elements in the set $\argmax\SetOf{a^{(i)}_j \odot x_j}{j \in [n]}$, and by $J^-_i$ analogously for negative elements.

  We set $\ell_i$ to be an arbitrary index in $\argmin\{|J^+_i|,|J^-_i|\}$, and define
  \[
  \mathbf{a_i^{(j)}} = 
  \begin{cases}
    \tsgn(a_{ij})t^{|a_{ij}|} + \alpha_i & \text{ for } j = \ell_i \\
    \tsgn(a_{ij})t^{|a_{ij}|} &  \text{ else } \enspace , 
  \end{cases}
  \]
  where
  \[
  \alpha_i = \dfrac{1}{\lambda_{\ell_i}}\left(-\sum_{k \in [d]} \tsgn(a_{ik})t^{|a_{ik}|+x_k} + \tsgn(b_i)t^{|b_i|}\right) \enspace .
  \]
  Note that $|b_i| \leq |a_{i\ell_i}| + x_{\ell_i}$ and $|a_{ij}| + x_j - x_{\ell_i} \leq |a_{i\ell_i}|$ for all $j \in [n]$. 
  Therefore, $\sval(\mathbf{a_i^{(j)}}) = a_{ij}$ for all $i \in [d]$ and $j \in [n]$.
  Furthermore, we get
  \begin{equation*}
    \begin{aligned}
      \mathbf{a_i} \cdot \lambda &= \sum_{k \in [d] \setminus \{\ell_i\}} \lambda_k\mathbf{a_i^{(k)}} + \lambda_{\ell_i}\mathbf{a_i^{(\ell_i)}}\\  &= \sum_{k \in [d]} \tsgn(a_{ik})t^{|a_{ik}|+x_k} -\sum_{k \in [d]} \tsgn(a_{ik})t^{|a_{ik}|+x_k} + \tsgn(b_i)t^{|b_i|} \\
      &= \tsgn(b_i)t^{|b_i|} \enspace .
    \end{aligned}
  \end{equation*}
  Hence, we have $\sval(\mathbf{a_i} \cdot \lambda) = b_i$ for all $i \in [d]$.
  This concludes the proof. 
\end{proof}
}
\iffull\prooflifthull\fi

\begin{remark}
  Theorem~\ref{thm:lift+hull} generalizes~\cite[Proposition 2.1]{DevelinYu:2007}, since $\val$ is a semiring homomorphism from $\KK_{\geq 0}$ to $\TTmax = \Tgeq$.
\end{remark}

\begin{corollary}
  The tropical convex hull is the union of the convex hulls of the lifts, i.e., 
  \[
  \tconv(A) = \sval( \conv( \sval^{-1} (A) )) \enspace .
  \]
\end{corollary}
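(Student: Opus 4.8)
The corollary reformulates Theorem~\ref{thm:lift+hull} using the more compact notation $\sval(\conv(\sval^{-1}(A)))$ in place of the union $\bigcup_{\sval(\bA)=A}\sval(\conv(\bA))$. So my plan is essentially to show these two expressions are literally the same set, after which the corollary follows immediately from the theorem. The work is to unpack the preimage $\sval^{-1}(A)$ correctly and match it against the indexed union in the theorem.

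**Unpacking the notation.** Here $A\in\TTpm^{d\times n}$ is a matrix, so $\sval^{-1}(A)\subseteq\KK^{d\times n}$ is the set of all matrices $\bA$ with $\sval(\bA)=A$ entrywise; these are exactly the "lifts" $\bA$ appearing under the union in the theorem. Then $\conv(\sval^{-1}(A))$ should denote the convex hull of this set of lift-matrices, viewed as a collection of point-configurations—but I need to be careful about what "$\conv$" of a set of matrices means and confirm it agrees with $\bigcup_{\sval(\bA)=A}\conv(\bA)$.

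The plan is as follows. First I would observe that, by Theorem~\ref{thm:lift+hull}, it suffices to prove the set identity
\[
\bigcup_{\sval(\bA)=A}\sval(\conv(\bA)) \;=\; \sval\bigl(\conv(\sval^{-1}(A))\bigr).
\]
For the inclusion $\subseteq$, fix a lift $\bA$ with $\sval(\bA)=A$; then $\bA\in\sval^{-1}(A)$, so every column of $\bA$ lies in $\sval^{-1}(A)$ and hence $\conv(\bA)\subseteq\conv(\sval^{-1}(A))$, giving $\sval(\conv(\bA))\subseteq\sval(\conv(\sval^{-1}(A)))$; taking the union over all lifts yields the claim.

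**The reverse inclusion.** For $\supseteq$, I would take a point $y\in\conv(\sval^{-1}(A))$, so $y=\sum_j \blambda_j \mathbf{v}^{(j)}$ is a finite convex combination of points $\mathbf{v}^{(j)}\in\sval^{-1}(A)$ (with $\blambda_j\ge 0$, $\sum_j\blambda_j=1$). Here the subtlety is that each $\mathbf{v}^{(j)}$ is a vector in $\KK^d$ whose signed valuation equals \emph{some column} of $A$, and different $\mathbf{v}^{(j)}$ may valuate to different columns. I would assemble these $\mathbf{v}^{(j)}$ into a single lift-matrix by assigning each to its corresponding column of $A$; since convex combinations of points each valuating to a column of $A$ can be regrouped as a convex combination of the $n$ columns (collecting the weights $\blambda_j$ that fall on each column), I obtain a matrix $\bA$ with $\sval(\bA)=A$ and $y\in\conv(\bA)$, whence $\sval(y)\in\sval(\conv(\bA))$. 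The one point needing care is that a column index might receive several distinct lift-vectors; I would handle this by noting that a convex combination of several lifts of the same column is itself a lift of that column (the signed valuation of a nonnegative combination of series all valuating to the same signed value again valuates to that value, since no cancellation of leading terms occurs), so the regrouping collapses to a genuine convex combination of $n$ columns.

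**Main obstacle.** The crux is precisely this regrouping/collapsing step: verifying that a convex combination (over $\KK_{\ge 0}$) of several distinct Puiseux lifts of the \emph{same} signed tropical value is again a lift of that value. This requires that the leading terms do not cancel, which holds because all the summands share the same sign and the same valuation, so their $t^{|a_{ij}|}$-terms add with a common sign and a strictly positive combined coefficient. Once this is established the identity of the two set-expressions is formal, and the corollary is immediate from Theorem~\ref{thm:lift+hull}.
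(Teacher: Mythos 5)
Your proposal is correct and takes the same route the paper intends: the paper states this corollary with no proof, treating it as an immediate notational restatement of Theorem~\ref{thm:lift+hull}, and your argument is precisely that deduction. The one point the paper leaves implicit---that a point of $\conv(\sval^{-1}(A))$ mixing several distinct lifts of the same column can be collapsed into a convex combination of the columns of a \emph{single} lift matrix, using that nonnegative combinations of same-sign, same-valuation Puiseux series incur no leading-term cancellation---is exactly the point you identify and justify correctly.
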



\subsection{Convex hull from hyperoperation} \label{subsec:hull+hyperoperation}
As we shall see in Proposition~\ref{prop:image+hyperoperation}, our signed tropical convex hull from Definition~\ref{def:inner+hull} is a natural generalization of the classical convex hull as image of a simplex to hyperoperations.
In recent years, hyperfields found their way into matroid theory due to the work~\cite{BakerBowler:2016} building on~\cite{Krasner:1983} and~\cite{Viro:2010}.
While hyperfields have been used in tropical
geometry~\cite{JellScheidererYu:2018} from an algebraic point of view,
they were not used to describe intrinsically defined geometric objects
before. 

We introduce the necessary notions for hyperfields to define a signed convex hull and compare our binary operations with hyperfield operations~\eqref{eq:semialgebraic+order+hyperfield}.
Let us briefly introduce the \emph{real plus-tropical hyperfield} $\STH$, see~\cite{Viro:2010}.
It has the multiplicative group $(\TTpm, \odot)$ and its additive hyperoperation on $\TTpm$ is given by
\begin{equation} \label{hyperfield+addition}
  x \boxplus y = \begin{cases}
    \argmax_{x,y}(|x|,|y|) & \text{ if } \chi \subseteq \{+,\Zero\} \text{ or } \chi = \{-\} \\
    [\ominus |x|, |x|] & \text{ else } \enspace .
  \end{cases}
\end{equation}
We see that the latter addition for non-balanced numbers $x,y \in \TTpm$ differs from the Definition in~\eqref{eq:balanced+addition} in that it has a multi-valued result in the powerset of $\TTpm$. A balanced outcome $z \in \Tzero$ is replaced with the interval $\Uncomp(z) = [-|z|,|z|]$.
One can extend the operations again componentwise and use the symbol $\boxdot$ for the product of two matrices or vectors.
In particular, the operation $\boxdot$ agrees with $\odot$ on $\TTpm$. 
The addition is set-valued in $\STH$ if and only if it would be balanced in $\TSS$.
It agrees with $\oplus$ on $\Tgeq$. 

We recall the order relations used in~\cite{JellScheidererYu:2018} for the multiplicative real tropical hyperfield. Note that they use the multiplication $\odot = $`$\cdot$' instead of our approach with `$+$'. 

A polynomial over the real tropical hyperfield is a formal expression
\[
F(x) = \boxplus_{d_1,\ldots,d_n \in \ZZ} c_{d_1,\ldots,d_n} x_1^{d_1}\cdots x_n^{d_n}
\]
which can be evaluated at an element $\zeta \in \STH^n$. This yields a subset
\begin{equation} \label{eq:multivalued+evaluation}
F(\zeta) = \boxplus_{d_1,\ldots,d_n \in \ZZ} c_{d_1,\ldots,d_n} \zeta_1^{d_1}\cdots \zeta_n^{d_n} \subseteq \STH^n \enspace .
\end{equation}
Note that we mainly deal with linear polynomials, where the exponent vector $(d_1,\dots,d_n) \in \ZZ^d$ is just a unit vector. 

One can define the sets
\begin{equation} \label{eq:semialgebraic+order+hyperfield}
  \begin{aligned}
    \{F = 0\} &:= \{(x_1,\ldots,x_n) \in \TTpm^n \colon \Zero \in F(x_1,\ldots,x_n) \} \\
    \{F \geq 0\} &:= \{(x_1,\ldots,x_n) \in \TTpm^n \colon F(x_1,\ldots,x_n) \cap \Tgeq \neq \emptyset \} \\
    \{F > 0\} &:= \{(x_1,\ldots,x_n) \in \TTpm^n \colon F(x_1,\ldots,x_n) \in \Tgt \} \enspace . 
  \end{aligned}
\end{equation}
Observe that $F = 0$ is indeed equivalent to $F \geq 0 \vee -F \leq 0$ due to the structure of the set~\eqref{eq:multivalued+evaluation}.
Translating~\eqref{eq:semialgebraic+order+hyperfield} to the symmetrized semiring $\TSS$ yields the relations `$\beq$', `$\teq$' and `$>$'.

To motivate the next construction, we consider a tropical polytope generated by $V \in \Tgeq^{d \times k}$ as the image of the tropical standard simplex in the sense that
\[
\tconv(V) = \{V \odot \lambda \mid \bigoplus_{\ell \in [k]} \lambda_{\ell} = 0 \} \enspace .
\]

For a matrix $A \in \TTpm^{d \times n}$, we define the \emph{balanced image} of the tropical standard simplex $\Delta_d = \{\lambda \mid \bigoplus_{\ell \in [k]} \lambda_{\ell} = 0 \}$ by
\begin{equation} 
 A \odot \Delta_d := \SetOf{A \odot x}{\bigoplus_{j \in [n]} x_j = 0, x \geq \Zero } \subset \TSS^d \enspace .
\end{equation}
With this notion, one can write $\tconv(A) = \bigcup_{z \in A \odot \Delta_d} \Uncomp(z)$.

By using the hyperoperations in $\STH$, we can naturally consider the image of the tropical standard simplex $\Delta_d = \{\lambda \mid \bigoplus_{\ell \in [k]} \lambda_{\ell} = 0 \}$ with respect to matrix multiplication by $V \in \TTpm^{d \times k}$ as a subset of $\TTpm^d$.

\begin{proposition} \label{prop:image+hyperoperation}
\[
V \boxdot \Delta_d = \tconv(V) \enspace .
\]
\end{proposition}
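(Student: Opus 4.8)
The plan is to show the two set inclusions $V \boxdot \Delta_d \subseteq \tconv(V)$ and $\tconv(V) \subseteq V \boxdot \Delta_d$ by unwinding both sides to componentwise statements about the hyperoperation $\boxplus$ versus the interval operator $\Uncomp(\cdot)$ applied to the balanced sum. The key observation is that both constructions are built coordinate-by-coordinate from the same underlying tropical linear combination $V \odot x$ (computed in the symmetrized semiring $\TSS$), and the only difference is how a \emph{balanced} outcome is interpreted: in $\TSS$ a balanced entry $z \in \Tzero$ is expanded via $\Uncomp(z) = [\ominus|z|, |z|]$, whereas in $\STH$ the hyperoperation $\boxplus$ directly returns the interval $[\ominus|x|,|x|]$ in exactly the balanced case. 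So the heart of the matter is a per-coordinate comparison.

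First I would fix $x \in \Tgeq^n$ with $\bigoplus_{j} x_j = 0$ and fix a coordinate $i \in [d]$, and compare the $i$-th coordinate of $V \boxdot x$ with $\Uncomp\bigl((V \odot x)_i\bigr)$. By definition, $(V \odot x)_i = \bigoplus_{j} V_{ij} \odot x_j$ evaluated in $\TSS$, and $(V \boxdot x)_i = \boxplus_{j} V_{ij} \odot x_j$ evaluated in $\STH$. Writing the scalars $w_j := V_{ij}\odot x_j \in \TTpm$, I would use the defining formulas \eqref{eq:balanced+addition} and \eqref{hyperfield+addition}: the $\TSS$-sum equals $\argmax_{j}(|w_j|)$ when the sign pattern among the maximizers is nondegenerate (giving a signed number $s$, whose $\Uncomp$ is the singleton $\{s\}$), and equals a balanced number $\bullet c$ with $c = \max_j |w_j|$ when the maximizers contain both a positive and a negative entry (whose $\Uncomp$ is $[\ominus c, c]$). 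The $\STH$-hyperoperation returns precisely the same singleton $\{s\}$ in the first case via the condition $\chi \subseteq \{+,\Zero\}$ or $\chi = \{-\}$, and precisely the interval $[\ominus c, c]$ in the second case via the ``else'' branch. This per-coordinate identity gives $(V \boxdot x)_i = \Uncomp\bigl((V \odot x)_i\bigr)$ for every $i$, so that the full evaluation satisfies $V \boxdot x = \prod_{i} \Uncomp\bigl((V \odot x)_i\bigr) = \Uncomp(V \odot x)$.

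With this pointwise identity in hand, the rest is bookkeeping. Taking the union over all $x \in \Delta_d$, the right-hand description $\tconv(V) = \bigcup_{z \in V \odot \Delta_d}\Uncomp(z)$ (the reformulation \eqref{eq:hull+U} supplied just before the statement) matches exactly with $V \boxdot \Delta_d = \bigcup_{x \in \Delta_d} V \boxdot x$, since $V \boxdot \Delta_d$ is by definition the union of the multivalued evaluations $F(\lambda)$ over $\lambda \in \Delta_d$ where $F$ is the linear ``polynomial'' with coefficient matrix $V$. I would make sure to align the indexing conventions (the simplex lives in $\TTpm^k$ with $\bigoplus_{\ell} \lambda_\ell = 0$) and to note that the multivalued matrix-vector product $V \boxdot \lambda$ is, coordinatewise, exactly the hyperoperation sum treated above.

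The step I expect to require the most care is the sign-pattern analysis matching \eqref{eq:balanced+addition} to \eqref{hyperfield+addition}: one must verify that the two conditions governing ``balanced versus signed'' outcomes really coincide. Concretely, the $\TSS$-formula uses $\chi = \{\tsgn(\xi) : \xi \in \argmax(|x|,|y|)\}$ and branches on $|\chi| = 1$, while the $\STH$-formula branches on $\chi \subseteq \{+,\Zero\}$ or $\chi = \{-\}$; I would check that these describe the same dichotomy (a single surviving sign, possibly with ties of equal sign, versus genuinely opposing signs among the maximizers), including the edge cases where the maximum is attained at $\Zero$ or where ties of equal magnitude but equal sign occur. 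Once this correspondence is nailed down for the binary operation and extended associatively to $n$-fold sums, the proposition follows immediately.
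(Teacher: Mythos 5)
Your proof is correct and takes essentially the same route as the paper's: the paper's one-sentence proof likewise rests on the observation that the hyperoperation \eqref{hyperfield+addition} returns exactly the interval $\Uncomp(z) = [\ominus|z|,|z|]$ precisely when the corresponding sum in $\TSS$ would be balanced, so that \eqref{eq:hull+union+intervals} coincides coordinatewise with the multivalued evaluation $V \boxdot \lambda$, and then one takes the union over $\lambda \in \Delta_d$. Your write-up simply makes explicit the per-coordinate sign-pattern comparison and the extension to $n$-fold hyperoperation sums that the paper leaves implicit.
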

\begin{proof}
  This follows directly by the definition of the set-valued addition in~\eqref{hyperfield+addition} from~\eqref{eq:hull+union+intervals} with $\Uncomp(z) = [\ominus |z|, |z|]$ for $z \in \Tzero$. 
\end{proof}

\newcommand{
Parallel to the development of tropical convexity, the more general notion of $\BB$-convexity was developed starting with~\cite{BriecHorvath:2004}.
The notion of $\BB$-convexity boils down to convexity defined over the semiring $\RR_{\geq 0}$ with operations `$\oplus$' $=$ `$\max$' and `$\odot$' $=$ `$\cdot$', see~\cite[Theorem 2.1.1]{BriecHorvath:2004}.
Taking logarithms transforms these operations to `$\oplus$' $=$ `$\max$' and `$\odot$' $=$ `$+$' on $\RR \cup \{-\infty\}$.
This gives rise to a transferred version of $\BB$-convexity on $\TTpm$ by considering the images of $\BB$-convex sets in $\RR^d$ under the map $\slog \colon x \mapsto \sgn(x) \log(|x|)$.

 The following example shows that our notion of signed tropical convexity is an even more restrictive notion than $\BB$-convexity and $\BB^{\sharp}$-convexity~\cite{Briec:2015}.
\begin{example} \label{ex:different+convexities}
  The tropical convex hull of $A = \{(\ominus 2, \ominus 1), (2,1)\}$ is the set
  \[
  [\ominus 2, 2] \times [\ominus 1,1] \enspace . 
  \]
  However, the set $\Co^r(A)$ is
  \[
  L = \SetOf{(2\odot\lambda,\lambda)}{\lambda \in [\ominus 1,1]} \enspace .
  \]
  for all $r \in \NN$.
  In particular, also $\Co^{\infty}(A)$ equals $L$.
  This implies that $\BB(L) = L$.
  We depict both in Figure~\ref{fig:BB+convex+trop+convex}.
   Hence, $\tconv(A)$ strictly contains $\BB(A)$. 
   Furthermore,~\cite[Corollary 4.2.4]{Briec:2015} shows that $L$ is also $\BB^{\sharp}$-convex.

   Interestingly, the set $L$ is also the image under the signed valuation of the set
  \[
  \conv\left( \pvec{-t^2}{-t}, \pvec{t^2}{t} \right) \enspace .
  \]
  Here, we mean the convex hull over the Puiseux series $\pseries{R}{t}$.
  So $L$ is the tropicalization of a single line segment while our hull construction yields the union of line segments whose spanning points tropicalize to $A$, as we saw in Section~\ref{subsec:puiseux+lifts}.
  For example, we get the set $\{\ominus 2\} \times [\ominus 1, 1] \cup [\ominus 2, 2] \times \{1\}$ as the tropicalization of $\conv\left(\pvec{-2t^2}{-t},\pvec{t^2}{2t}\right)$.
\end{example}

\begin{figure}[ht]
  \begin{tikzpicture}

  \Koordinatenkreuz{-3.2}{3.3}{-2.2}{2.3}{$x_1$}{$x_2$}
  \Gitter{-3.1}{3.1}{-2.1}{2.1}

  \coordinate (pp) at (2,1){};
  \coordinate (mp) at (-2,1){};
  \coordinate (pm) at (2,-1){};
  \coordinate (mm) at (-2,-1){};
  
  \draw[fill,Linesegment,opacity=0.4] (pp) -- (mp) -- (mm) -- (pm) -- cycle;

  \node[Endpoint] at (pp){};
  \node[Endpoint] at (mm){};

  \draw[purple, very thick] (pp) -- (mm);
  
\end{tikzpicture}
  \caption{Distinction between $\BB$-convex line and tropical line segment through the origin. }
  \label{fig:BB+convex+trop+convex}
\end{figure}

 \begin{remark} \label{rem:cancellative+sum}
It is tempting to define a \emph{cancellative sum} for two numbers $a, b \in \TTpm$ by
\[
a \overline{\oplus} b =
\begin{cases}
  a & |a| > |b| \\
  b & |b| > |a| \\
  a & a = b \\
  \Zero & a = \ominus b
\end{cases}
\enspace .
\]
This can be extended componentwise to $\TTpm^d$.

An iterative version of this construction is used in~\cite{Briec:2015}.
A conceptional drawback of the cancellative sum is that it is not associative, as the example
  \[
  0 \cancsum (\ominus 0 \cancsum -1) = 0 \cancsum \ominus 0 = \Zero \neq -1 = \Zero \cancsum -1 = ( 0 \cancsum \ominus 0) \cancsum -1 
  \]
  shows.
\end{remark}

   \begin{remark}
     Recall that Theorem~\ref{thm:generators+for+all+orthants} gave a way to determine the whole tropical convex hull from the tropical convex hull in each orthant.
     For sufficiently generic matrices, one can use the cancellative sum from Remark~\ref{rem:cancellative+sum}.
  If there are no antipodal points, no balanced coefficients arise in the elimination and the map $\xi$ used for Theorem~\ref{thm:replace+balanced} is just the identity. 
  Hence, the iterative construction of a single intersection point with a coordinate hyperplane suffices.      
   \end{remark}

}{
Parallel to the development of tropical convexity, the more general notion of $\BB$-convexity was developed starting with~\cite{BriecHorvath:2004}.
The notion of $\BB$-convexity boils down to convexity defined over the semiring $\RR_{\geq 0}$ with operations `$\oplus$' $=$ `$\max$' and `$\odot$' $=$ `$\cdot$', see~\cite[Theorem 2.1.1]{BriecHorvath:2004}.
Taking logarithms transforms these operations to `$\oplus$' $=$ `$\max$' and `$\odot$' $=$ `$+$' on $\RR \cup \{-\infty\}$.
This gives rise to a transferred version of $\BB$-convexity on $\TTpm$ by considering the images of $\BB$-convex sets in $\RR^d$ under the map $\slog \colon x \mapsto \sgn(x) \log(|x|)$.

 The following example shows that our notion of signed tropical convexity is an even more restrictive notion than $\BB$-convexity and $\BB^{\sharp}$-convexity~\cite{Briec:2015}.
\begin{example} \label{ex:different+convexities}
  The tropical convex hull of $A = \{(\ominus 2, \ominus 1), (2,1)\}$ is the set
  \[
  [\ominus 2, 2] \times [\ominus 1,1] \enspace . 
  \]
  However, the set $\Co^r(A)$ is
  \[
  L = \SetOf{(2\odot\lambda,\lambda)}{\lambda \in [\ominus 1,1]} \enspace .
  \]
  for all $r \in \NN$.
  In particular, also $\Co^{\infty}(A)$ equals $L$.
  This implies that $\BB(L) = L$.
  We depict both in Figure~\ref{fig:BB+convex+trop+convex}.
   Hence, $\tconv(A)$ strictly contains $\BB(A)$. 
   Furthermore,~\cite[Corollary 4.2.4]{Briec:2015} shows that $L$ is also $\BB^{\sharp}$-convex.

   Interestingly, the set $L$ is also the image under the signed valuation of the set
  \[
  \conv\left( \pvec{-t^2}{-t}, \pvec{t^2}{t} \right) \enspace .
  \]
  Here, we mean the convex hull over the Puiseux series $\pseries{R}{t}$.
  So $L$ is the tropicalization of a single line segment while our hull construction yields the union of line segments whose spanning points tropicalize to $A$, as we saw in Section~\ref{subsec:puiseux+lifts}.
  For example, we get the set $\{\ominus 2\} \times [\ominus 1, 1] \cup [\ominus 2, 2] \times \{1\}$ as the tropicalization of $\conv\left(\pvec{-2t^2}{-t},\pvec{t^2}{2t}\right)$.
\end{example}

\begin{figure}[ht]
  \begin{tikzpicture}

  \Koordinatenkreuz{-3.2}{3.3}{-2.2}{2.3}{$x_1$}{$x_2$}
  \Gitter{-3.1}{3.1}{-2.1}{2.1}

  \coordinate (pp) at (2,1){};
  \coordinate (mp) at (-2,1){};
  \coordinate (pm) at (2,-1){};
  \coordinate (mm) at (-2,-1){};
  
  \draw[fill,Linesegment,opacity=0.4] (pp) -- (mp) -- (mm) -- (pm) -- cycle;

  \node[Endpoint] at (pp){};
  \node[Endpoint] at (mm){};

  \draw[purple, very thick] (pp) -- (mm);
  
\end{tikzpicture}
  \caption{Distinction between $\BB$-convex line and tropical line segment through the origin. }
  \label{fig:BB+convex+trop+convex}
\end{figure}

 \begin{remark} \label{rem:cancellative+sum}
It is tempting to define a \emph{cancellative sum} for two numbers $a, b \in \TTpm$ by
\[
a \overline{\oplus} b =
\begin{cases}
  a & |a| > |b| \\
  b & |b| > |a| \\
  a & a = b \\
  \Zero & a = \ominus b
\end{cases}
\enspace .
\]
This can be extended componentwise to $\TTpm^d$.

An iterative version of this construction is used in~\cite{Briec:2015}.
A conceptional drawback of the cancellative sum is that it is not associative, as the example
  \[
  0 \cancsum (\ominus 0 \cancsum -1) = 0 \cancsum \ominus 0 = \Zero \neq -1 = \Zero \cancsum -1 = ( 0 \cancsum \ominus 0) \cancsum -1 
  \]
  shows.
\end{remark}

   \begin{remark}
     Recall that Theorem~\ref{thm:generators+for+all+orthants} gave a way to determine the whole tropical convex hull from the tropical convex hull in each orthant.
     For sufficiently generic matrices, one can use the cancellative sum from Remark~\ref{rem:cancellative+sum}.
  If there are no antipodal points, no balanced coefficients arise in the elimination and the map $\xi$ used for Theorem~\ref{thm:replace+balanced} is just the identity. 
  Hence, the iterative construction of a single intersection point with a coordinate hyperplane suffices.      
   \end{remark}

}

\iffull
\subsection{Connection with $\BB$-convexity} \label{subsec:limit+hull}

Parallel to the development of tropical convexity, the more general notion of $\BB$-convexity was developed starting with~\cite{BriecHorvath:2004}.
The notion of $\BB$-convexity boils down to convexity defined over the semiring $\RR_{\geq 0}$ with operations `$\oplus$' $=$ `$\max$' and `$\odot$' $=$ `$\cdot$', see~\cite[Theorem 2.1.1]{BriecHorvath:2004}.
Taking logarithms transforms these operations to `$\oplus$' $=$ `$\max$' and `$\odot$' $=$ `$+$' on $\RR \cup \{-\infty\}$.
This gives rise to a transferred version of $\BB$-convexity on $\TTpm$ by considering the images of $\BB$-convex sets in $\RR^d$ under the map $\slog \colon x \mapsto \sgn(x) \log(|x|)$.

 The following example shows that our notion of signed tropical convexity is an even more restrictive notion than $\BB$-convexity and $\BB^{\sharp}$-convexity~\cite{Briec:2015}.
\begin{example} \label{ex:different+convexities}
  The tropical convex hull of $A = \{(\ominus 2, \ominus 1), (2,1)\}$ is the set
  \[
  [\ominus 2, 2] \times [\ominus 1,1] \enspace . 
  \]
  However, the set $\Co^r(A)$ is
  \[
  L = \SetOf{(2\odot\lambda,\lambda)}{\lambda \in [\ominus 1,1]} \enspace .
  \]
  for all $r \in \NN$.
  In particular, also $\Co^{\infty}(A)$ equals $L$.
  This implies that $\BB(L) = L$.
  We depict both in Figure~\ref{fig:BB+convex+trop+convex}.
   Hence, $\tconv(A)$ strictly contains $\BB(A)$. 
   Furthermore,~\cite[Corollary 4.2.4]{Briec:2015} shows that $L$ is also $\BB^{\sharp}$-convex.

   Interestingly, the set $L$ is also the image under the signed valuation of the set
  \[
  \conv\left( \pvec{-t^2}{-t}, \pvec{t^2}{t} \right) \enspace .
  \]
  Here, we mean the convex hull over the Puiseux series $\pseries{R}{t}$.
  So $L$ is the tropicalization of a single line segment while our hull construction yields the union of line segments whose spanning points tropicalize to $A$, as we saw in Section~\ref{subsec:puiseux+lifts}.
  For example, we get the set $\{\ominus 2\} \times [\ominus 1, 1] \cup [\ominus 2, 2] \times \{1\}$ as the tropicalization of $\conv\left(\pvec{-2t^2}{-t},\pvec{t^2}{2t}\right)$.
\end{example}

\begin{figure}[ht]
  \begin{tikzpicture}

  \Koordinatenkreuz{-3.2}{3.3}{-2.2}{2.3}{$x_1$}{$x_2$}
  \Gitter{-3.1}{3.1}{-2.1}{2.1}

  \coordinate (pp) at (2,1){};
  \coordinate (mp) at (-2,1){};
  \coordinate (pm) at (2,-1){};
  \coordinate (mm) at (-2,-1){};
  
  \draw[fill,Linesegment,opacity=0.4] (pp) -- (mp) -- (mm) -- (pm) -- cycle;

  \node[Endpoint] at (pp){};
  \node[Endpoint] at (mm){};

  \draw[purple, very thick] (pp) -- (mm);
  
\end{tikzpicture}
  \caption{Distinction between $\BB$-convex line and tropical line segment through the origin. }
  \label{fig:BB+convex+trop+convex}
\end{figure}

 \begin{remark} \label{rem:cancellative+sum}
It is tempting to define a \emph{cancellative sum} for two numbers $a, b \in \TTpm$ by
\[
a \overline{\oplus} b =
\begin{cases}
  a & |a| > |b| \\
  b & |b| > |a| \\
  a & a = b \\
  \Zero & a = \ominus b
\end{cases}
\enspace .
\]
This can be extended componentwise to $\TTpm^d$.

An iterative version of this construction is used in~\cite{Briec:2015}.
A conceptional drawback of the cancellative sum is that it is not associative, as the example
  \[
  0 \cancsum (\ominus 0 \cancsum -1) = 0 \cancsum \ominus 0 = \Zero \neq -1 = \Zero \cancsum -1 = ( 0 \cancsum \ominus 0) \cancsum -1 
  \]
  shows.
\end{remark}

   \begin{remark}
     Recall that Theorem~\ref{thm:generators+for+all+orthants} gave a way to determine the whole tropical convex hull from the tropical convex hull in each orthant.
     For sufficiently generic matrices, one can use the cancellative sum from Remark~\ref{rem:cancellative+sum}.
  If there are no antipodal points, no balanced coefficients arise in the elimination and the map $\xi$ used for Theorem~\ref{thm:replace+balanced} is just the identity. 
  Hence, the iterative construction of a single intersection point with a coordinate hyperplane suffices.      
   \end{remark}

\else
In Appendix~\ref{app:b-conv}, we compare our convexity notion to
$\BB$-convexity studied in the literature ~\cite{BriecHorvath:2004},
as well as a cancellative sum construction used in \cite{Briec:2015}.
\fi

\section{Farkas' Lemma and Fourier-Motzkin elimination} \label{sec:Farkas+FM}

\subsection{Convexity and tropical linear feasibility} \label{subsec:dual+notions}
For a matrix $A =(a_{ij})\in \TTpm^{d \times n}$, we define the \emph{non-negative kernel}
\begin{equation} \label{eq:non+negative+kernel}
  \begin{aligned}
    \nnker(A) &= \SetOf{x \in \Tgeq^n \setminus \{\Zero\}}{A \odot x
      \beq \Zero}
  \end{aligned}
\end{equation}
This corresponds to the classical definition of a polyhedral cone in
the form $Ax = 0, x \geq 0, x\neq 0$. We replace `$=$' by `$\beq$' and
`$\geq$' by `$\teq$'.
In terms of the non-negative kernel, we can express containment in the
convex hull as follows.

\begin{restatable}{proposition}{eqhullsol} \label{prop:equivalence+hull+solution}
  For $A \in \TTpm^{d \times n}$ and $b \in \TTpm^d$ we have
  \[
  b \in \tconv(A) \Leftrightarrow
  \nnker
  \begin{pmatrix}
    A & \ominus b \\
    0 & \ominus 0
  \end{pmatrix}
  \neq \emptyset \enspace .
  \]
\end{restatable}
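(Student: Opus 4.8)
The statement asserts an equivalence between membership $b\in\tconv(A)$ and nonemptiness of a non-negative kernel of an enlarged matrix. The plan is to unfold both sides into their defining conditions and match them up. First I would recall from Definition~\ref{def:inner+hull} (in the form~\eqref{eq:hull+union+intervals}) that $b\in\tconv(A)$ means precisely that there exists $x\in\Tgeq^n$ with $\bigoplus_{j\in[n]}x_j=0$ such that $b\beq A\odot x$. On the other side, unfolding~\eqref{eq:non+negative+kernel}, a vector $\binom{x}{x_0}\in\Tgeq^{n+1}\setminus\{\Zero\}$ lies in the non-negative kernel of the displayed $(d+1)\times(n+1)$ matrix exactly when
\[
A\odot x \ominus b\odot x_0 \beq \Zero \quad\text{and}\quad \bigl(\textstyle\bigoplus_{j\in[n]}x_j\bigr)\ominus (0\odot x_0)\beq \Zero \enspace .
\]

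The core of the argument is to show these two systems describe the same existence question. For the direction ``$\Rightarrow$'', given a convex combination witness $x$ with $\bigoplus_j x_j=0$ and $b\beq A\odot x$, I would set $x_0=0$ and append it, so the new vector is nonzero and lies in $\Tgeq^{n+1}$. The second balance condition becomes $0\ominus 0=\bullet 0\beq\Zero$, which holds by Lemma~\ref{lem:basic+properties+beq}\eqref{item:beq+char}. For the first balance condition, $b\beq A\odot x$ rewrites (using that $\beq$ is defined via $\ominus$ and is symmetric, together with the compatibility of $\beq$ with the semiring operations) as $A\odot x\ominus b\beq\Zero$, which is exactly what is needed with $x_0=0$.

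For the converse ``$\Leftarrow$'', I start from a nonzero $\binom{x}{x_0}$ in the kernel. The essential step is to argue $x_0\ne\Zero$: the second balance condition $\bigoplus_j x_j\beq 0\odot x_0=x_0$ forces, via Lemma~\ref{lem:basic+properties+beq}\eqref{item:beq+char} and the fact that $\bigoplus_j x_j\in\Tgeq$, that $|{\bigoplus_j x_j}|=|x_0|$ when one side is genuinely non-balanced; since all $x_j,x_0\in\Tgeq$ are honest (non-balanced) signed numbers, the balance $\bigoplus_j x_j\beq x_0$ collapses to equality $\bigoplus_j x_j=x_0$. If $x_0=\Zero$ then $\bigoplus_j x_j=\Zero$ too, making the whole vector $\Zero$, contradicting nonemptiness; hence $x_0\in\Tgt$. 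Then I rescale by $(\ominus x_0)=x_0^{\odot(-1)}$ (an element of the group $\STT$ from Remark~\ref{rem:transformation+group}), replacing $x$ by $x_0^{\odot -1}\odot x$, which preserves both the non-negativity and the balance relations and normalizes $\bigoplus_j x_j=0$. The first balance condition $A\odot x\ominus b\odot x_0\beq\Zero$ then rescales to $A\odot(x_0^{\odot -1}\odot x)\ominus b\beq\Zero$, i.e.\ $b\beq A\odot x'$ for the normalized $x'$, exhibiting $b\in\tconv(A)$.

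The step I expect to be the main obstacle is the converse rescaling argument, specifically verifying that multiplying a balance relation $u\beq\Zero$ by a scalar $x_0^{\odot -1}\in\Tgt$ is valid and preserves the relation. This is not automatic because $\beq$ is not an equivalence relation (Remark~\ref{rem:not+equivalence}), but it does hold for scalar multiplication by a positive element: since $\beq$ is defined through membership of $\ominus$-differences in $\Tzero$, and $\odot$ by a positive scalar is an order-and-sign-preserving bijection commuting with the absolute value, the balance is preserved; I would state this cleanly, leaning on the homomorphism property of $\ominus$ and on Lemma~\ref{lem:basic+properties+teq}\eqref{item:teq+affine+monotony} (its $\beq$-analogue) rather than re-deriving it componentwise. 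Handling balanced intermediate entries of $A\odot x$ requires no extra care here, since the balance relation $\beq$ is exactly the relation tolerant of such entries, which is why it appears in the definition of both $\tconv$ and $\nnker$.
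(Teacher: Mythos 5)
Your proposal is correct and follows essentially the same route as the paper's proof: unfold both definitions, append the coordinate $x_0=0$ for the forward direction, and for the converse show that the last coordinate cannot be $\Zero$ (else the whole kernel vector would be $\Zero$) and then rescale so that it equals $0$, reading off the witnessing convex combination. One cosmetic slip: the scaling factor is the tropical multiplicative inverse $x_0^{\odot(-1)}$ (classically $-x_0$, still a positive signed number), not $\ominus x_0$, which in this paper denotes the sign flip; since the rest of your argument uses $x_0^{\odot -1}$ consistently, nothing breaks.
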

\newcommand{\proofeqhullsol}{
\begin{proof}
  The condition $b \in \tconv(A)$ is equivalent to the existence of an
  element $x \in \Tgeq^n$ with $\bigoplus_{j \in [n]} x_j = 0$ and
  $A\odot x\beq b$.

Let $(x,t)$ be a vector in the non-negative kernel, where $x\in
\Tgeq^n$ and $t\in \Tgeq$ denotes the last component. First, we claim
that $t\neq\Zero$. Indeed, $t=\Zero$ would yield $\bigoplus_{j \in
  [n]} x_j \beq \Zero$, which implies $x_j=\Zero$ for all $j\in [n]$. Thus, we
obtain $(x,t)=\Zero$, a contradiction. Since $t\neq\Zero$,  we can
scale $(x,t)$ such that $t=0$. In this case, the definition of the
kernel gives $A\odot x\ominus b\beq \Zero$ and $\bigoplus_{j \in [n]}
x_j\ominus 0 \beq \Zero$. The latter inequality yields $\bigoplus_{j
  \in [n]} x_j=0$.
This is the same as the combination witnessing $b \in \tconv(A)$ as
above.
\end{proof}}
\iffull\proofeqhullsol\fi

\begin{corollary} \label{cor:zero+in+convex+hull}
  The origin $\Zero$ is in the convex hull $\tconv(A)$ if and only if the non-negative kernel $\nnker(A)$ is not empty. 
\end{corollary}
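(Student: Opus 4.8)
The plan is to deduce Corollary~\ref{cor:zero+in+convex+hull} as the special case $b = \Zero$ of Proposition~\ref{prop:equivalence+hull+solution}, which is already available. First I would substitute $b = \Zero$ into the statement of the proposition. Since $\ominus \Zero = \Zero$, the augmented matrix becomes
\[
\begin{pmatrix} A & \ominus \Zero \\ 0 & \ominus 0 \end{pmatrix}
= \begin{pmatrix} A & \Zero \\ 0 & \ominus 0 \end{pmatrix} \enspace ,
\]
so the proposition tells us that $\Zero \in \tconv(A)$ if and only if this $(d+1) \times (n+1)$ matrix has a nonempty non-negative kernel. The remaining task is to show that the non-negative kernel of this augmented matrix is nonempty exactly when $\nnker(A)$ itself is nonempty.

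For the forward direction, I would take a vector $(x,s) \in \Tgeq^{n+1} \setminus \{\Zero\}$ in the kernel of the augmented matrix, where $s$ is the coordinate corresponding to the appended column. The last row reads $0 \odot x_1 \oplus \cdots \oplus 0 \odot x_n \oplus (\ominus 0)\odot s \beq \Zero$, i.e.\ $\bigl(\bigoplus_{j} x_j\bigr) \ominus s \beq \Zero$, which forces $\bigoplus_j x_j = s$ by Lemma~\ref{lem:basic+properties+beq}\eqref{item:beq+char} (a balance with a signed right-hand side is an equality). In particular $x \neq \Zero$, since otherwise $s = \Zero$ and the whole vector would vanish. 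The first $d$ rows give $A \odot x \oplus \Zero \odot s = A \odot x \beq \Zero$, so $x$ lies in $\nnker(A)$, proving it nonempty. Conversely, given $x \in \nnker(A)$, I would scale it (using that the kernel is invariant under multiplication by positive scalars) so that $\bigoplus_j x_j = 0$, and then set $s = 0$; the pair $(x,s)$ satisfies both the top block ($A \odot x \beq \Zero$ by assumption, and the appended column contributes $\Zero \odot s = \Zero$) and the bottom row ($\bigoplus_j x_j \ominus 0 = 0 \ominus 0 = \bullet 0 \beq \Zero$), hence lies in the augmented kernel.

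I do not anticipate a serious obstacle here, as this is essentially a bookkeeping corollary of the proposition; the only point requiring care is confirming that the appended column $(\Zero, \ominus 0)^{\top}$ behaves as expected under the tropical operations, namely that $\Zero \odot s = \Zero$ absorbs the contribution of $s$ to the first $d$ rows while the $\ominus 0$ entry still ties $s$ to $\bigoplus_j x_j$ in the last row. Alternatively, and perhaps more cleanly, one can bypass the augmented matrix entirely: the defining condition $\Zero \in \tconv(A)$ unwinds via~\eqref{eq:hull+union+intervals} to the existence of $x \in \Tgeq^n$ with $\bigoplus_j x_j = 0$ and $A \odot x \beq \Zero$, and since $\nnker(A)$ is scale-invariant this is equivalent to the existence of some nonzero $x \in \Tgeq^n$ with $A \odot x \beq \Zero$, which is precisely $\nnker(A) \neq \emptyset$. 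Either route gives the claim in a few lines.
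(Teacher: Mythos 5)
Your proposal is correct and takes essentially the same route as the paper: the paper's proof is exactly the one-line reduction "set $b = \Zero$ in Proposition~\ref{prop:equivalence+hull+solution} and compare with the definition~\eqref{eq:non+negative+kernel}," leaving implicit the bookkeeping (augmented kernel nonempty iff $\nnker(A)$ nonempty, via the last row forcing $\bigoplus_j x_j = s$ and scale-invariance) that you spell out. Your details, including the direct unwinding of~\eqref{eq:hull+union+intervals} as an alternative, are accurate and fill in precisely what the paper omits.
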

\begin{proof}
  Setting $b = \Zero$ in Proposition~\ref{prop:equivalence+hull+solution} implies the equivalence with the definition from~\eqref{eq:non+negative+kernel}. 
\end{proof}

We now define the \emph{open tropical cone} as the dual to the non-negative kernel \eqref{eq:non+negative+kernel}.
\begin{equation} \label{eq:def+sep}
  \sep(A) = \{y \in \TTpm^d \mid \trans{y} \odot A > \Zero \} \enspace .
\end{equation}
The name is motivated by the use of the elements of $\sep(A)$ as separators of the columns of $A$ from the origin. 
Note that the condition `$ > \Zero$', in particular, means that the product `$\trans{y} \odot A$' is comparable with $\Zero$ and, equivalently, is in $\Tgt$.

\smallskip

We can also define $\nnker(A)$ and $\sep(A)$ for $A \in \TSS^{d \times n}$.
However, this does not provide a wider class of objects.
This follows by replacing a balanced number by $\Zero$ in $\nnker(A)$ and applying Theorem~\ref{thm:replace+balanced} for $\sep(A)$.
We still extend the definition to these more general matrices, as it will lead to simplified arguments. 

\begin{example}
In the next example we work with the semiring
$(\RR_{\geq 0},\max,\cdot)$, as
it provides a more natural picture of the behaviour around the origin
in the different orthants.
  Instead of the semiring defined on $\RR \cup \{-\infty\}$ with operations `$\max$' and `$+$', one can isomorphically use the semiring $\RR_{> 0} \cup \{0\}$ with `$\max$' and `$\cdot$'.
  The exponentiation map
  \[
  \exp \colon \RR \cup \{-\infty\} \to \RR_{> 0} \cup \{0\}
  \]
  yields an isomorphism between these two semirings.
  One can further extend this mapping to $\TTpm$ by setting
  \begin{equation*}
    \begin{aligned}
      x \mapsto \qquad \tsgn(x)\exp(|x|) \enspace .
    \end{aligned}
  \end{equation*}
  Note that the image of $\TSS$ involves a balanced version of positive numbers as it decomposes into the union $\RR_{> 0} \cup \RR_{<0} \cup \bullet\RR_{>0} \cup \{0\}$.
  
  The dotted black shape in Figure~\ref{fig:separators+plane} depicts the $(\max,\cdot)$-convex hull of the columns of the matrix $A = \begin{pmatrix} 3 & 1 & -2 \\ 1 & 4 & 3 \end{pmatrix}$.
  The combinations, where balanced numbers occur, are
  \[
  \frac{1}{2} \cdot \pvec{-2}{3} \oplus \pvec{1}{4} = \pvec{\bullet 1}{4} \quad \text{ and } \quad \frac{2}{3} \cdot \pvec{3}{1} \oplus \pvec{-2}{3} = \pvec{\bullet 2}{3} \enspace .
  \]
  The blue shaded area is $\sep(A)$.
  The configuration visualizes Theorem~\ref{thm:weak+duality}, as $\sep(A)$ is not empty and the $(\max,\cdot)$-convex hull of $A$ does not contain the origin. 
\end{example}

\begin{figure}
  \begin{tikzpicture}[scale = 0.9]

  \newcommand\mmeps{0.1}

  \Koordinatenkreuz{-3.3}{4.4}{-0.5}{5.35}{$x_1$}{$x_2$}
  \Gitter{-3.2}{4.1}{-0.3}{5.1}

  \coordinate (A) at (3,1){};
  \coordinate (B) at (1,4){};
  \coordinate (C) at (-2,3){};

  \coordinate (canc1) at (0,4){};
  \coordinate (canc2) at (0,3){};
  \coordinate (break1) at (3,4){};
  \coordinate (break2) at (-2,4){};
  
  \coordinate (H1) at (4.2,4.2/3){};
  \coordinate (H2) at (-3.1,3.1*3/2){};

  \coordinate (O1) at (-1.7,1.7*3){};
  \coordinate (O2) at (4.1,4.1*2/3){};

  \coordinate (origin) at (0,0) {};
  
  \draw[dashed, gray] (origin) -- (H1);
  \draw[dashed, gray] (origin) -- (H2);

  \draw[dashed, blue] (origin) -- (O1);
  \draw[dashed, blue] (origin) -- (O2);

  \draw[fill,blue,opacity=0.2] (origin) -- (O1) -- (4.1,5.1) -- (O2) -- cycle;

  \draw[thick,pattern=dots] (A) -- (break1) -- (break2) -- (C) -- (3,3) -- cycle;

  \node[Endpoint] at (A){};
  \node[Endpoint] at (B){};
  \node[Endpoint] at (C){};
   
\end{tikzpicture}
  \caption{An open tropical cone visualized with the operations `$\oplus$' $=$ `$\max$' and `$\odot$' $=$ `$\cdot$'. }
  \label{fig:separators+plane}
\end{figure}
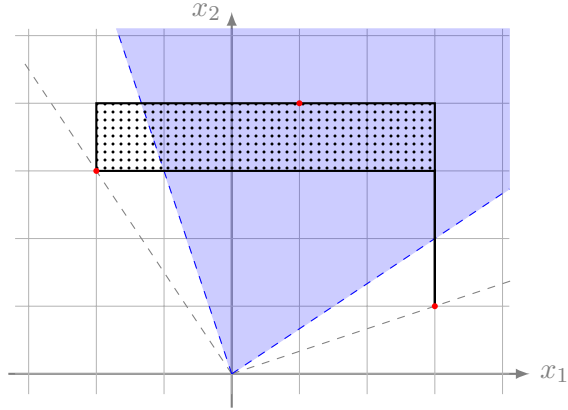

Let us now show that weak duality holds between the non-negative kernel and the open tropical cone:
\begin{theorem}[Weak duality] \label{thm:weak+duality}
  For a matrix $A \in \TTpm^{d \times n}$ at least one of the sets $\nnker(A)$ and $\sep(A)$ is empty. 
  Equivalently, for an arbitrary vector $y \in \TTpm^d$ and a non-negative vector $x \in \Tgeq^n \setminus \{\Zero\}$ at most one of $A \odot x \in \TT_{\bullet}^d$ and $\trans{y} \odot A > \Zero$ can hold.
\end{theorem}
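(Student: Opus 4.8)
The plan is to assume for contradiction that neither set is empty, fix witnesses $x\in\nnker(A)$ and $y\in\sep(A)$, and then derive an impossible classification of the single scalar $\trans{y}\odot A\odot x\in\TSS$ by evaluating it under two different bracketings. The whole argument rests on the associativity of matrix multiplication over the semiring $\TSS$, which lets me read this product either as $\trans{y}\odot(A\odot x)$ or as $(\trans{y}\odot A)\odot x$.

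First I would use the bracketing $\trans{y}\odot(A\odot x)$. By the defining property of $\nnker(A)$, every component of $A\odot x$ lies in $\TT_{\bullet}$, i.e.\ is balanced or equals $\Zero$. Multiplying such an entry by any signed scalar $y_i$ stays in $\TT_{\bullet}$, since $\tsgn$ of a balanced number is $\bullet$ and $\bullet$ is absorbing under $\odot$ (multiplication by $\Zero$ yields $\Zero$, which is also in $\TT_{\bullet}$); moreover a tropical sum $\oplus$ of elements of $\TT_{\bullet}$ remains in $\TT_{\bullet}$, because two balanced numbers add to a balanced number. Hence $\trans{y}\odot A\odot x\in\TT_{\bullet}$.

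Next I would use the bracketing $(\trans{y}\odot A)\odot x$. Because $y\in\sep(A)$, every entry of the row vector $\trans{y}\odot A$ is strictly positive, i.e.\ lies in $\Tgt$. Since $x\in\Tgeq^n\setminus\{\Zero\}$, each product $(\trans{y}\odot A)_j\odot x_j$ lies in $\Tgeq$ — it is $\Zero$ when $x_j=\Zero$ and in $\Tgt$ otherwise — and at least one $x_j$ is strictly positive. Inside the non-negative orthant $\Tgeq$ the addition $\oplus$ is simply $\max$, so no balancing can occur, and a sum containing at least one strictly positive term is strictly positive. Therefore $\trans{y}\odot A\odot x\in\Tgt$. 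Combining this with the previous paragraph contradicts the disjointness of $\Tgt$ and $\TT_{\bullet}$ as copies inside $\TSS$, proving that $x$ and $y$ cannot coexist; this yields both the pointwise formulation and the emptiness of one of the two sets.

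The crux I expect is the asymmetry between the two bracketings, and keeping it honest is the only delicate part. The $\nnker$-grouping forces the product into the balanced copy through the absorbing behaviour of $\bullet$ under both $\odot$ and $\oplus$, whereas the $\sep$-grouping confines everything to $\Tgeq$, where addition is cancellation-free so that strict positivity survives the summation. I would therefore check carefully that $\Tgeq$ is closed under $\oplus$ without ever producing a balanced entry (which is exactly why the non-negativity $x\in\Tgeq^n$ is essential) and that the associativity step is legitimate in $\TSS$; everything else is routine bookkeeping with $\tsgn$ and $|\cdot|$.
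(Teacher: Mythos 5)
Your proof is correct and takes essentially the same approach as the paper: it evaluates the scalar $\trans{y} \odot A \odot x$ under the two bracketings, using on one side that $\TT_{\bullet}$ is closed under $\oplus$ and absorbing under $\odot$ (the paper phrases this as $\TT_{\bullet}$ being a left- and right-ideal of $\TSS$, citing the literature), and on the other side that strict positivity of $\trans{y}\odot A$ survives multiplication by a nonnegative, nonzero $x$. The only difference is cosmetic — you verify the ideal/closure properties by hand rather than citing them, which is fine.
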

\begin{proof}
  The claim follows from the fact that $\TT_{\bullet}$ is a left- and right-ideal of $\TSS$, see~\cite[Definition 2.6]{AkianGaubertGuterman:2014}.
  This means that for $x \in \nnker(A)$, the product $\trans{y} \odot A \odot x$ is in $\TT_{\bullet}$ while $\trans{y} \odot A > \Zero \Rightarrow \trans{y} \odot A \odot x > \Zero$. 
\end{proof}

\begin{remark}
  We give a direct proof of the former statement. 
  We consider the product $\trans{y} \odot A \odot x$.
  Scaling the rows of $A$ by arbitrary numbers in $\TTpm$ does not change whether $x \in \nnker(A)$, just as scaling the columns of $A$ by a non-negative number in $\Tgeq$ does not change whether $y \in \sep(A)$.
  Hence, we can assume that $x$ and $y$ only have the entries $0$ or $\Zero$.
  Let $a_{pq}$ be the entry of $A$ with maximal absolute value.
  For $x \in \nnker(A)$, there is an index $r \in [n]$ such that $a_{pr} = \ominus a_{pq}$.
  We can assume that $a_{pq} > \Zero > a_{pr}$.
  From this we conclude that $y \not\in \sep(A)$ since the $r$th column of $\trans{y} \odot A$ cannot be positive.
\end{remark}

The key result of this section will be showing the appropriate version
of Farkas' lemma. The proof will follow via Fourier-Motzkin elimination. 
\begin{theorem}[Farkas' lemma] \label{thm:farkas+lemma}
  For a matrix $A \in \TTpm^{d \times n}$ exactly one of the sets $\nnker(A)$ and $\sep(A)$ is nonempty. 
\end{theorem}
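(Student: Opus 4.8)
The plan is to combine the already-established weak duality (Theorem~\ref{thm:weak+duality}) with a constructive use of signed Fourier--Motzkin elimination. Weak duality tells us that $\nnker(A)$ and $\sep(A)$ are never simultaneously nonempty, so it only remains to prove that at least one of them is nonempty. Concretely, I would prove the implication: if $\sep(A)=\emptyset$, then $\nnker(A)\neq\emptyset$. This is the signed tropical analogue of \emph{Gordan's theorem}---the homogeneous alternative between a strictly separating covector $y$ and a balanced non-negative combination $x$ of the columns.

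First I would phrase $\sep(A)\neq\emptyset$ as a feasibility question: does the homogeneous system of strict inequalities $(\trans{y}\odot A)_j>\Zero$, $j\in[n]$, have a solution $y\in\TTpm^d$? Then I would eliminate the variables $y_1,\dots,y_d$ one at a time using the signed Fourier--Motzkin elimination of Theorem~\ref{thm:balanced+fourier+motzkin} and Corollary~\ref{cor:signed+fourier+motzkin}. The two properties I need from the elimination are (i) feasibility is preserved at each step, and (ii) every inequality produced is a combination of the original inequalities with coefficients in $\Tgeq$; tracking these coefficients records a vector $x\in\Tgeq^n$ for which the derived constraint is the corresponding combination $\bigoplus_j x_j\odot a^{(j)}=A\odot x$ of the columns, read off the remaining variables.

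After all $d$ variables are eliminated, the system is variable-free. If it is feasible, then by repeated application of property (i) the original system is feasible and $\sep(A)\neq\emptyset$, contradicting our assumption; hence the terminal system must be infeasible. Since the whole system is homogeneous, an infeasible variable-free strict inequality can only be the impossible statement $\Zero>\Zero$, and the coefficients accumulated along the elimination assemble into a vector $x\in\Tgeq^n$ that is not identically $\Zero$ (the elimination genuinely used the inequalities). The cancellation of every $y_i$-term forces each coordinate of $A\odot x$ to be balanced, i.e.\ $A\odot x\beq\Zero$, which is precisely an element of $\nnker(A)$.

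The main obstacle is the bookkeeping inside the signed elimination: when two strict inequalities are combined to cancel a variable $y_i$, a positive and a negative term of equal magnitude may collapse to a balanced coefficient, so the intermediate systems live over $\TSS$ rather than $\TTpm$. The point is that in the strict regime such balanced coefficients can always be resolved back to signed ones by Theorem~\ref{thm:replace+balanced} without changing the solution set, which is exactly why Farkas' lemma needs only the strict version of Fourier--Motzkin and not the far more delicate non-strict resolution of Proposition~\ref{prop:resolving+balanced+coefficiens+teq}. The remaining care is to verify that the terminal cancellation yields the balance relation $A\odot x\beq\Zero$ rather than a strict one, and that the tracked multiplier $x$ is genuinely nonzero, so that $x\in\nnker(A)$ as required.
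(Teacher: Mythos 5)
Your skeleton is exactly the paper's: weak duality (Theorem~\ref{thm:weak+duality}) settles that at most one of the two sets is nonempty, and Fourier--Motzkin elimination turns infeasibility of the strict system $\trans{y}\odot A>\Zero$ into a kernel certificate. The paper packages this as an induction on $d$: the base case $d=1$ is done by hand, and the inductive step applies Theorem~\ref{thm:balanced+fourier+motzkin} to get $\sep(A_{-d}\odot T)=\emptyset$, takes $z\in\nnker(A_{-d}\odot T)$ by induction, and back-substitutes $T\odot z\in\nnker(A)$ --- which is precisely your forward elimination with coefficient tracking, unrolled, since the composite of the matrices $S^{(i)}\odot T^{(i)}$ is your tracked multiplier.

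There is, however, one step in your write-up that fails as stated: the combination of tracking property (ii) with Theorem~\ref{thm:replace+balanced}. The map $\xi$ acts on the \emph{derived} coefficient vectors, splitting a column with a balanced entry $\bullet\alpha$ into two signed columns with entries $\alpha$ and $\ominus\alpha$; but both halves can only inherit the same tracked multiplier $x$, and $A\odot x$ reproduces the balanced value $\bullet\alpha$, not either signed resolution. So after an application of $\xi$, a derived inequality is in general \emph{not} of the form $A\odot x$ for any $x\in\Tgeq^n$ whose eliminated coordinates stay balanced, and property (ii) is false. The repair is to do what the paper's proof does: never invoke $\xi$ at all. The definitions of $\sep$ and $\nnker$ and Theorem~\ref{thm:balanced+fourier+motzkin} are deliberately set up for matrices over $\TSS$, so the elimination may carry balanced coefficients along; then every derived system is the literal product $A_{-i}\odot S^{(i)}\odot T^{(i)}$, your property (ii) holds verbatim, and the eliminated rows of $A\odot x$ remain in $\Tzero$ throughout because $\Tzero$ is an ideal of $\TSS$ (the same fact behind weak duality). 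Alternatively one can keep $\xi$ but weaken the invariant to ``$A\odot x$ agrees with the derived coefficients up to balancing,'' which still yields $A\odot x\beq\Zero$ at the end since balancedness only improves. With either repair your argument coincides with the paper's proof; nonzeroness of $x$ is as you say, since every column of each $T^{(i)}$ has a nonzero entry.
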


\begin{remark} \label{rem:connection+nonnegative+Farkas}
  Theorem~\ref{thm:farkas+lemma} is similar to~\cite[Corollary~3.12]{GrigorievPodolskii:2018}.
  Through a suitable replacement of the balanced coefficients and a careful analysis of the occuring signs, Theorem~\ref{thm:farkas+lemma} may be deduced from~\cite{GrigorievPodolskii:2018}.
  Note however, that we allow for unconstrained variables in the definition of $\sep(A)$ which is not directly covered by~\cite[Corollary~3.12]{GrigorievPodolskii:2018}.  
\end{remark}

\subsection{Technical properties of the order relations}

The next lemma is a version of transitivity and it is a preparation for the elimination of a variable in a system of inequalities in Section~\ref{subsec:fourier-motzkin}. 
\iffull\else The proofs of the next two propositions are given in the Appendix.\fi
\begin{restatable}{proposition}{elimstrict} \label{prop:elimination+system}
  Let $A, B \subset \TSS$ be two finite sets. 
  There is an element $c \in \TSS$ with
  \begin{equation} \label{eq:pairs+with+element}
    c \ominus a > \Zero \text{ and } b \ominus c > \Zero \text{ for all } a \in A, b \in B
  \end{equation}
  if and only if
  \begin{equation} \label{eq:pairs+greater+zero}
    b \ominus a > \Zero \text{ for all } (a,b) \in A \times B \enspace .
  \end{equation}
  Furthermore, the element $c$ can chosen to be signed. 
\end{restatable}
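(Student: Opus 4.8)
The plan is to treat the two directions separately: the forward (``only if'') implication is a transitivity argument obtained by adding strict inequalities, while the reverse (``if'') implication, together with the signedness of $c$, is an interpolation argument carried out inside the totally ordered set $\TTpm$.

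For the ``only if'' direction I would assume such a $c$ exists and fix an arbitrary pair $(a,b)\in A\times B$, so that $c\ominus a\in\Tgt$ and $b\ominus c\in\Tgt$. Since $\ominus$ is a semiring homomorphism and $x>y\Leftrightarrow x\ominus y\in\Tgt$, the strict analogue of the reflection rule \eqref{eq:symmetry+reflection+order} gives that $b\ominus c>\Zero$ is the same as $\ominus b<\ominus c$. Adding $a<c$ and $\ominus b<\ominus c$ via Lemma~\ref{lem:adding+strict+inequalities} yields $a\ominus b< c\ominus c$, where $c\ominus c$ equals $\bullet|c|$ (or $\Zero$ when $c=\Zero$). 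The only elements strictly below a balanced number $\bullet|c|$ are the signed numbers lying below $\ominus|c|$, so $a\ominus b\in\Tlt$; applying $\ominus$ gives $b\ominus a\in\Tgt$, which is exactly \eqref{eq:pairs+greater+zero}. The case $c=\Zero$ is immediate. This is the routine direction.

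The harder direction is ``if''. The first observation is that \eqref{eq:pairs+greater+zero} already forbids balanced elements in \emph{both} $A$ and $B$: if $a,b\in\Tzero$ then no pair in $\Tzero\times\Tzero$ is comparable, so $b\ominus a\notin\Tgt$. Hence at least one of $A,B$ is contained in $\TTpm$. I would then pass to surrogates, replacing each balanced $a=\bullet|a|\in A$ by $|a|\in\Tgt$ and each balanced $b\in B$ by $\ominus|b|\in\Tlt$, producing finite sets $A',B'\subseteq\TTpm$. By the case distinction in \eqref{eq:extended+partial+order}, for any prospective \emph{signed} $c$ the constraints $c>a$ and $c<b$ are equivalent to $c>a'$ and $c<b'$; and checking the three surviving sign combinations (both signed; $a$ balanced and $b$ signed; $a$ signed and $b$ balanced) shows that \eqref{eq:pairs+greater+zero} translates precisely into $a'<b'$ for all $a'\in A'$, $b'\in B'$.

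Finally, $\TTpm$ is a totally ordered set, being two copies of $\RR\cup\{-\infty\}$ glued at $\Zero$, and it is densely ordered with neither a least nor a greatest element. Thus $\max A'<\min B'$ (with the evident conventions when a set is empty) bounds a nonempty open interval, and any signed $c$ in this interval satisfies $a'<c<b'$ for all surrogates, hence $a<c<b$ for the original elements; this $c$ is signed, giving the ``furthermore'' clause at once. I expect the main obstacle to be the bookkeeping of the sign cases in the surrogate translation, and in particular verifying that the incomparability of balanced numbers is handled correctly and that the dense order on $\TTpm$ leaves no gap even across the gluing point $\Zero$.
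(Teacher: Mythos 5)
Your proof is correct and follows essentially the same route as the paper's: the forward direction adds the two strict inequalities via Lemma~\ref{lem:adding+strict+inequalities} and compares $b \ominus a$ with the balanced element $c \ominus c$, and the backward direction observes that balanced elements cannot occur in both $A$ and $B$ and then interpolates a signed $c$ in the dense total order on $\TTpm$. The only cosmetic difference is that the paper invokes the symmetry~\eqref{eq:symmetry+reflection+order} to assume $B \subset \TTpm$ and thus only replaces balanced elements of $A$ by their absolute values, whereas you replace balanced elements on both sides by the signed surrogates $|a|$ and $\ominus|b|$.
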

\newcommand{\proofelimstrict}{
\begin{proof}
  For each pair $(a,b) \in A \times B$, we add the inequalities in~\eqref{eq:pairs+with+element} using Lemma~\ref{lem:adding+strict+inequalities} and obtain $b \ominus a \oplus c \ominus c > \Zero$.
  This implies $b \ominus a > |c| \geq \Zero$ and, hence,~\eqref{eq:pairs+greater+zero}.
  
  For the other direction, note that $A \subset \TTpm$ or $B \subset \TTpm$, as two balanced elements are not comparable by '$>$'.
  Because of the symmetry~\eqref{eq:symmetry+reflection+order}, we can assume that $B \subset \TTpm$.
  Let $\beta$ denote the minimum of $B$.
  Furthermore, we define $\alpha$ as the maximum of $A \cap \TTpm$ and $\{|a| \mid a \in A\cap \TT_{\bullet}\}$, where either of these two sets could also be empty.
  We obtain from~\eqref{eq:pairs+greater+zero} that $\beta > \alpha$, where we use that $b > a \Leftrightarrow b > |a|$ for $a \in \Tzero$.
  An arbitrary element $c$  with $\alpha<c<\beta$
  fulfills~\eqref{eq:pairs+with+element}.
  As the elements in $B$ are totally ordered, the claim for the inequalities involving $B$ follows immediately.
  Distinguishing the balanced and signed elements yields the claim for the inequalities involving $A$. 
\end{proof}
}
\iffull\proofelimstrict\fi
\begin{corollary} \label{cor:non-strict-relation-transitive}
  The non-strict relation $x\ge y$ defined in~\eqref{eq:extended+partial+order+non+strict} is a partial order. 
\end{corollary}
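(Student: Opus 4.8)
The plan is to verify directly that $\geq$ satisfies the three axioms of a partial order—reflexivity, antisymmetry, and transitivity—with transitivity being the only substantive step, which I would deduce from Proposition~\ref{prop:elimination+system}.

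First I would dispatch reflexivity: by definition~\eqref{eq:extended+partial+order+non+strict}, the case $x = x$ gives $x \geq x$ for every $x \in \TSS$. Next, for antisymmetry, I would assume $x \geq y$ and $y \geq x$ with $x \neq y$ and derive a contradiction. The assumption forces $x > y$ and $y > x$, that is, $x \ominus y \in \Tgt$ and $y \ominus x \in \Tgt$ by~\eqref{eq:extended+partial+order+simple}. Since $\ominus$ is a semiring homomorphism with $\ominus\ominus = \mathrm{id}$, one has $y \ominus x = \ominus(x \ominus y)$, and $\ominus$ sends $\Tgt$ into $\Tlt$; this contradicts $y \ominus x \in \Tgt$, so in fact $x = y$.

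The core step is transitivity, which I would reduce to transitivity of the strict relation: it suffices to show that $x > y$ and $y > z$ imply $x > z$, after which the non-strict case follows by splitting off the possibility that one of the two relations is an equality (if $x = y$ the conclusion is just $y \geq z$, if $y = z$ it is just $x \geq y$, and otherwise both are strict). To prove the strict transitivity I would invoke the forward direction of Proposition~\ref{prop:elimination+system} with the singletons $A = \{z\}$, $B = \{x\}$ and the witness $c = y$: the hypotheses $c \ominus a = y \ominus z > \Zero$ and $b \ominus c = x \ominus y > \Zero$ are precisely $y > z$ and $x > y$, so the proposition delivers $b \ominus a = x \ominus z > \Zero$, that is, $x > z$.

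I do not expect any genuine obstacle here, since the difficulty has been front-loaded into Proposition~\ref{prop:elimination+system}; the corollary is essentially its repackaging. The only point requiring a moment's care is that the middle element $y$ may be balanced, but the forward implication of Proposition~\ref{prop:elimination+system} places no restriction on the witness $c$, so substituting $c = y$ is legitimate regardless of $\tsgn(y)$.
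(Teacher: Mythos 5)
Your proposal is correct and follows essentially the same route as the paper: reflexivity and antisymmetry read off from the definitions \eqref{eq:extended+partial+order+non+strict} and \eqref{eq:extended+partial+order}, and transitivity obtained from the forward direction of Proposition~\ref{prop:elimination+system} applied to singletons with the middle element as the witness $c$ (which, as you rightly note, is allowed to be balanced in that direction). The paper states this in two terse sentences; you have merely supplied the details it leaves implicit.
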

\begin{proof}
  Reflexivity and antisymmetry follow directly from~\eqref{eq:extended+partial+order+non+strict} and~\eqref{eq:extended+partial+order}.
  Proposition~\ref{prop:elimination+system} implies transitivity. 
\end{proof}

\begin{restatable}{proposition}{elimnonstrict} \label{prop:elimination+system+non+strict}
  Let $A, B \subset \TSS$ be two finite sets. 
  There is an element $c \in \TTpm$ with
  \begin{equation} \label{eq:pairs+with+element+non+strict}
    c \ominus a \teq \Zero \text{ and } b \ominus c \teq \Zero \text{ for all } a \in A, b \in B
  \end{equation}
  if and only if
  \begin{equation} \label{eq:pairs+greater+zero+non+strict}
    b \ominus a \teq \Zero \text{ for all } (a,b) \in A \times B \enspace .
  \end{equation}
\end{restatable}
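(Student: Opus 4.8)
The plan is to translate the conditions into the partial order on signed numbers and then reduce to finding a point of $\TTpm$ inside an interval on the totally ordered line. First I note that subtracting $\Zero$ changes nothing, so $c \ominus a \teq \Zero$ is the same as $c \teq a$, while $b \ominus c \teq \Zero$ is $b \teq c$ and $b \ominus a \teq \Zero$ is $b \teq a$; thus the statement asks precisely for a signed $c$ that is $\teq$-above every $a \in A$ and $\teq$-below every $b \in B$, equivalently the pairwise relation $b \teq a$.

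For the direction from the existence of $c$ to the pairwise condition~\eqref{eq:pairs+greater+zero+non+strict}, I would simply invoke the restricted transitivity through a signed element, Lemma~\ref{lem:basic+properties+teq}\eqref{item:restricted+signed+transitivity}: since $c \in \TTpm$, the relations $b \teq c$ and $c \teq a$ yield $b \teq a$ for every pair. This is the easy direction.

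The substance lies in the converse. The key device is to assign to every $s \in \TSS$ two signed numbers, a lower endpoint $\underline{s}$ and an upper endpoint $\overline{s}$, by $\underline{s}=\overline{s}=s$ when $s \in \TTpm$ and $\underline{s}=\ominus|s|$, $\overline{s}=|s|$ when $s \in \Tzero$; that is, a balanced number is replaced by the two ends of its incomparability interval $\Uncomp(s)=[\ominus|s|,|s|]$. A short check of the four sign-combinations, using the explicit descriptions of $>$ in~\eqref{eq:extended+partial+order} and of $\beq$ in Lemma~\ref{lem:basic+properties+beq}, then gives, for a \emph{signed} $c$, the characterizations $c \teq a \Leftrightarrow c \geq \underline{a}$ and $b \teq c \Leftrightarrow c \leq \overline{b}$, together with the pairwise identity $b \teq a \Leftrightarrow \overline{b} \geq \underline{a}$. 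With these in hand I set $\alpha = \max_{a \in A}\underline{a}$ and $\beta = \min_{b \in B}\overline{b}$, both signed since all endpoints are signed. Hypothesis~\eqref{eq:pairs+greater+zero+non+strict} is exactly $\overline{b} \geq \underline{a}$ for all pairs, i.e. $\beta \geq \alpha$, and then any signed $c$ with $\alpha \leq c \leq \beta$ — for instance $c=\alpha$ — satisfies both families in~\eqref{eq:pairs+with+element+non+strict}. The degenerate cases $A=\emptyset$ or $B=\emptyset$ are immediate, taking $c$ to be any signed number $\leq \beta$, respectively $\geq \alpha$, or arbitrary if both are empty.

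I expect the converse to be the main obstacle, and specifically the bookkeeping of balanced elements: unlike the strict version (Proposition~\ref{prop:elimination+system}), where at most one of $A,B$ can contain balanced elements because two balanced numbers are never comparable by $>$, here both may, since any two balanced numbers satisfy $b \teq a$. The endpoint substitution is exactly what absorbs this, converting the non-transitive relation $\teq$ into ordinary comparisons on the totally ordered set $\TTpm$; the only care required is to verify the sign-combination cases in the characterizations above and to ensure that the chosen $c$ is genuinely signed rather than balanced, which the construction via $\alpha,\beta$ guarantees.
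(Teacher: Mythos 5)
Your proof is correct and follows essentially the same route as the paper's: your endpoints $\underline{a}=\ominus|a|$, $\overline{b}=|b|$ for balanced elements and the quantities $\alpha=\max_{a\in A}\underline{a}$, $\beta=\min_{b\in B}\overline{b}$ coincide exactly with the paper's $\overline{\alpha}=\max(\ominus|\alpha_0|,\alpha_1)$ and $\underline{\beta}=\min(|\beta_0|,\beta_1)$, and both arguments then pick any signed $c$ in the interval $[\overline{\alpha},\underline{\beta}]$. Your explicit characterizations $c\teq a\Leftrightarrow c\geq\underline{a}$ and $b\teq c\Leftrightarrow c\leq\overline{b}$ merely make the paper's concluding ``check all possibilities'' step more systematic.
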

\newcommand{\proofelimnonstrict}{
\begin{proof}
  
  The first direction from~\eqref{eq:pairs+with+element+non+strict} to~\eqref{eq:pairs+greater+zero+non+strict} follows from Lemma~\ref{lem:basic+properties+teq}\eqref{item:teq+other+side} and~\ref{lem:basic+properties+teq}\eqref{item:restricted+signed+transitivity} because of $c \in \TTpm$.  

  \smallskip

  For the other direction, let
  \begin{equation} \label{eq:possible+bounds+balanced+signed}
    \begin{aligned}
      \alpha_0 &= \argmin\SetOf{|a|}{a \in A \cap \Tzero} \enspace , \\
      \alpha_1 &= \max\SetOf{a}{a \in A \cap \TTpm} \enspace , \\
      \beta_0 &= \argmin\SetOf{|b|}{b \in B \cap \Tzero} \enspace , \\
      \beta_1 &= \min\SetOf{b}{b \in B \cap \TTpm} \enspace ,
    \end{aligned}
  \end{equation}
  with respect to the ordering '$\geq$'. 
  By construction, we get from~\eqref{eq:pairs+greater+zero+non+strict} the relation $\beta_1 \ominus \alpha_1 \teq \Zero$, which yields $\beta_1 \geq \alpha_1$.
  Furthermore, we obtain $\beta_1 \ominus \alpha_0 \teq \Zero$ implying $\beta_1 \geq \ominus |\alpha_0|$ and $\beta_0 \ominus \alpha_1 \teq \Zero$ implying $|\beta_0| \geq \alpha_1$.
  We conclude that
    \[
  \underline{\beta} := \min(|\beta_0|,\beta_1) \geq \max(\ominus|\alpha_0|,\alpha_1) =: \overline{\alpha} \enspace ,
  \]
  using also the trivial inequality $|\beta_0| \geq \ominus |\alpha_0|$.
  Let $\gamma$ be an arbitrary element in the interval
  \[
  \SetOf{x \in \TTpm}{\underline{\beta} \geq x \geq \overline{\alpha}} \neq \emptyset .
  \]
  By checking all possibilities arising from the list in~\eqref{eq:possible+bounds+balanced+signed}, we see that the element $\gamma$ fulfills $b \ominus \gamma \teq \Zero$ and $\gamma \ominus a \teq \Zero$ for all $a \in A, b \in B$. 
\end{proof}
}

\iffull\proofelimnonstrict\fi

To deal with geometric objects in $\TTpm^d$, we will use balanced numbers because this allows for explicit calculations in the semiring $\TSS$.
However, as we are only interested in the signed part of the sets. 
We provide a first tool to resolve balanced numbers in inequalities.
While this is for strict inequalities, Proposition~\ref{prop:resolving+balanced+coefficiens+teq} provides a tool for the relation $\teq$. 

\begin{lemma} \label{lem:resolve+balanced}
  For $a,b \in \TSS$, we have an equivalence of
  \begin{enumerate}
  \item \label{eq:inequality+balanced} $a \ominus a \oplus b > \Zero $
  \item \label{eq:split+balanced} $a \oplus b > \Zero \text{ and } \ominus a \oplus b > \Zero $
  \item \label{eq:all+between+balanced} For all $c \in [\ominus |a|, |a|] = \Uncomp(a \ominus a)$, it holds $c \oplus b > \Zero $. 
  \end{enumerate}
\end{lemma}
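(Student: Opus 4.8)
My plan is to show that each of \eqref{eq:inequality+balanced}, \eqref{eq:split+balanced}, and \eqref{eq:all+between+balanced} is equivalent to the single clean condition $b > |a|$ (equivalently, $b \in \Tgt$ with $|b| > |a|$), and to pass between the three using the reformulation of strict inequalities $x \oplus y > z \Leftrightarrow x > z \ominus y$ recorded in Remark~\ref{rem:reformulate+inequality+order}. Two preliminary observations set this up. First, directly from the addition rule~\eqref{eq:balanced+addition}, the summands $a$ and $\ominus a$ have equal absolute value and either opposite signs or both lie in $\Tzero$; hence $a \ominus a = \bullet|a|$ is balanced with $|a \ominus a| = |a|$, confirming $\Uncomp(a \ominus a) = [\ominus|a|, |a|]$ as asserted in the statement. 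Second, $\ominus$ is an involution on $\TSS$ and $\Zero \ominus x = \ominus x$.

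For condition \eqref{eq:inequality+balanced}, isolating $b$ via Remark~\ref{rem:reformulate+inequality+order} gives $a \ominus a \oplus b > \Zero \Leftrightarrow b > \Zero \ominus(a \ominus a) = \bullet|a|$, and by the middle case of the order~\eqref{eq:extended+partial+order} (a signed number against a balanced one) this is exactly $b > |a|$. For condition \eqref{eq:split+balanced}, the same reformulation rewrites $a \oplus b > \Zero$ as $b > \ominus a$ and $\ominus a \oplus b > \Zero$ as $b > \ominus \ominus a = a$; thus \eqref{eq:split+balanced} says $b > a$ and $b > \ominus a$. When $a \in \TTpm$ the pair $\{a, \ominus a\}$ equals $\{|a|, \ominus|a|\}$, whose maximum in the total order on $\TTpm$ is $|a|$, so the conjunction collapses to $b > |a|$ by transitivity; when $a \in \Tzero$ we have $a = \ominus a = \bullet|a|$, and again $b > \bullet|a| \Leftrightarrow b > |a|$ by the same case of~\eqref{eq:extended+partial+order}.

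For condition \eqref{eq:all+between+balanced}, reformulating each instance gives $c \oplus b > \Zero \Leftrightarrow b > \ominus c$, so the condition reads: $b > \ominus c$ for every $c \in [\ominus|a|, |a|]$. The reflection symmetry~\eqref{eq:symmetry+reflection+order} shows $\ominus$ carries the symmetric interval $[\ominus|a|,|a|]$ bijectively onto itself, so \eqref{eq:all+between+balanced} is equivalent to $b > c'$ for all $c' \in [\ominus|a|,|a|]$; since that interval has maximum $|a|$ and $\TTpm$ is totally ordered, this is once more $b > |a|$. The only genuinely delicate points are the bookkeeping between the signed and balanced cases — in particular the comparisons against a balanced number, which rely on $b > \bullet|a| \Leftrightarrow b > |a|$ — and the fact that the reformulation identity is legitimate here precisely because all three statements are \emph{strict} inequalities $> \Zero$; its non-strict analogue fails, as the same remark cautions. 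Everything else is routine once $a \ominus a = \bullet|a|$ is in hand.
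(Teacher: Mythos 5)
Your proof is correct, but it is organized genuinely differently from the paper's. The paper proves the cycle \eqref{eq:all+between+balanced} $\Rightarrow$ \eqref{eq:split+balanced} $\Rightarrow$ \eqref{eq:inequality+balanced} $\Rightarrow$ \eqref{eq:all+between+balanced}: the first implication is treated as a special case, the second follows by adding the two strict inequalities (Lemma~\ref{lem:adding+strict+inequalities}, together with idempotency of $\oplus$), and the third exploits that $a \ominus a$ is balanced and hence incomparable, so that \eqref{eq:inequality+balanced} forces $b > |a| = |a \ominus a|$, after which $c \oplus b > \Zero$ for every $c$ with $|c| \leq |a|$. You instead reduce each of the three conditions to the single normal form $b > |a|$, using the move-to-the-other-side identity for strict inequalities from Remark~\ref{rem:reformulate+inequality+order} and the equivalence $b > \bullet|a| \Leftrightarrow b > |a|$. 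The central mechanism is the same in both arguments --- comparison against the balanced number $\bullet|a|$ is what pins down $b > |a|$, which is precisely the paper's step \eqref{eq:inequality+balanced} $\Rightarrow$ \eqref{eq:all+between+balanced} --- but your hub-and-spoke structure makes the common reason for the equivalence explicit, and it treats balanced $a$ in \eqref{eq:split+balanced} more carefully than the paper does: for $a \in \Tzero$, neither $a$ nor $\ominus a$ lies in $\Uncomp(a \ominus a) \subseteq \TTpm$, so \eqref{eq:split+balanced} is not literally an instance of \eqref{eq:all+between+balanced}, a point the paper's one-line ``special case'' claim glosses over. The price of your route is more case bookkeeping, while the paper's cyclic argument is shorter because each implication uses only the weakest tool needed. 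One detail you should spell out: your appeals to the middle case of~\eqref{eq:extended+partial+order} presuppose $b \in \TTpm$; when $b \in \Tzero$, both sides of $b > \bullet|a| \Leftrightarrow b > |a|$ fail (two balanced numbers are incomparable, and $b > |a|$ would require $\ominus|b| > |a|$), so the equivalence does hold for all $b \in \TSS$, but this case deserves an explicit sentence rather than a parenthetical remark.
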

\begin{proof}
  The condition~\eqref{eq:all+between+balanced} clearly implies~\eqref{eq:split+balanced}, as the latter is just a special case.
  The implication from~\eqref{eq:split+balanced} to~\eqref{eq:inequality+balanced} follows by adding up the positive values in~\eqref{eq:split+balanced}. For the direction from~\eqref{eq:inequality+balanced} to~\eqref{eq:all+between+balanced}, we use that $a \ominus a$ is balanced and, hence, incomparable. Therefore, we have $b > |a|$. Because of $|a| = |\ominus a| = |a \ominus a|$, the claim follows. 
\end{proof}

Given a matrix $A\in \TSS^{d\times n}$, let us construct the matrix
$B=\xi(A)\in \TTpm^{d'\times n}$ for $d\le d'\le 2d$ as follows. We
include in $B$  all the columns of $A$ that do not contain any balanced elements.
For every column $a_{* j}$ that contains some
balanced elements, we include two columns $a'$ and $a''$ in $B$, such
that  whenever $a_{ij}=\bullet \alpha$ is a balanced entry, then we
set $a'_{i}=\oplus \alpha$ and $a''_{i}=\ominus \alpha$. For all other
$i\in [d]$, we set $a'_i=a''_i=a_{ij}$.
\begin{theorem}\label{thm:replace+balanced}
For every matrix $A\in \TSS^{d\times n}$, $\sep(A)=\sep(\xi(A))$.
\end{theorem}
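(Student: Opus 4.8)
The plan is to prove $\sep(A)=\sep(\xi(A))$ by showing both inclusions, and the heart of the matter is the column-by-column analysis relating a single balanced column $a_{*j}$ of $A$ to its two signed replacements $a',a''$ in $B=\xi(A)$. Recall $\sep(A)=\{y\in\TTpm^d : \trans{y}\odot A > \Zero\}$, which means that for \emph{every} column $a_{*j}$ of $A$ we need $\trans{y}\odot a_{*j}>\Zero$. So the statement reduces to a purely columnwise claim: for a fixed $y$ and a fixed balanced column $a=a_{*j}$ with replacements $a',a''$, I want
\[
\trans{y}\odot a > \Zero \quad\Longleftrightarrow\quad \trans{y}\odot a' > \Zero \ \text{ and }\ \trans{y}\odot a'' > \Zero \enspace .
\]
Columns without balanced entries are copied verbatim into $B$, so they contribute the identical constraint on both sides and need no argument.

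**Reducing to Lemma~\ref{lem:resolve+balanced}.**
First I would write $\trans{y}\odot a = \bigoplus_{i\in[d]} y_i\odot a_i$ and split the index set according to the sign type of $a_i$: let $S$ be the indices with $a_i$ signed (or $\Zero$) and $T$ the indices with $a_i=\bullet\alpha_i$ balanced. For $i\in S$ we have $a'_i=a''_i=a_i$, whereas for $i\in T$ we have $a'_i=\oplus\alpha_i$, $a''_i=\ominus\alpha_i$, and $a_i=\oplus\alpha_i\ominus\oplus\alpha_i$ in the sense that $\bullet\alpha_i = (\oplus\alpha_i)\ominus(\oplus\alpha_i)$. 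Setting $b=\bigoplus_{i\in S} y_i\odot a_i$ (the common signed part), the key observation is that $\trans{y}\odot a = b \oplus \bigoplus_{i\in T}(y_i\odot\alpha_i \ominus y_i\odot\alpha_i)$, which is $b$ plus a sum of balanced terms. The plan is to handle the balanced terms one at a time: repeatedly applying Lemma~\ref{lem:resolve+balanced} with the running signed part playing the role of $b$ and each $y_i\odot\alpha_i$ playing the role of $a$, so that condition \eqref{eq:inequality+balanced} (the single balanced inequality for column $a$) is equivalent to the simultaneous positivity of all $2^{|T|}$ sign-resolutions.

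**The combinatorial bookkeeping.**
The apparent obstacle is that Lemma~\ref{lem:resolve+balanced} resolves one balanced coefficient into a $\pm$ pair, so $|T|$ balanced entries naively yield $2^{|T|}$ signed columns, whereas $\xi(A)$ produces only \emph{two} columns $a',a''$. I expect this to be the main point requiring care. The resolution is that all the balanced terms in column $a$ share the same data: by part~\eqref{eq:all+between+balanced} of Lemma~\ref{lem:resolve+balanced}, $\trans{y}\odot a>\Zero$ is equivalent to $c\oplus b>\Zero$ for \emph{all} $c$ in the interval $\Uncomp$ of the total balanced magnitude, and since addition takes the maximum absolute value, $\bigl|\bigoplus_{i\in T}(y_i\odot\bullet\alpha_i)\bigr| = \max_{i\in T}|y_i\odot\alpha_i| = \bigl|\bigoplus_{i\in T} y_i\odot\alpha_i\bigr|$. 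Thus the whole block of balanced terms collapses to a single interval whose endpoints are realized precisely by the all-$\oplus$ choice (giving $a'$) and the all-$\ominus$ choice (giving $a''$); the intermediate $2^{|T|}-2$ sign patterns never dominate and hence impose no extra constraint. Concretely, $\trans{y}\odot a>\Zero$ iff $b>\max_{i\in T}|y_i\odot\alpha_i|$, and this last condition is exactly $\trans{y}\odot a'>\Zero$ together with $\trans{y}\odot a''>\Zero$, because those two products equal $b\oplus\bigoplus_{i\in T} y_i\odot\alpha_i$ and $b\ominus\bigoplus_{i\in T} y_i\odot\alpha_i$ respectively.

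**Assembling the two inclusions.**
With the columnwise equivalence in hand, the theorem follows immediately. If $y\in\sep(A)$, then $\trans{y}\odot a_{*j}>\Zero$ for every column, so by the equivalence every column of $B$—both the copied signed columns and the split pairs—gives a positive product, whence $y\in\sep(\xi(A))$. Conversely, if $y\in\sep(\xi(A))$, then in particular both $a'$ and $a''$ give positive products for each split column, so the equivalence returns $\trans{y}\odot a_{*j}>\Zero$ for the original balanced column, and the signed columns transfer directly; thus $y\in\sep(A)$. This gives $\sep(A)=\sep(\xi(A))$, completing the proof.
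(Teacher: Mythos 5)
Your proof is correct and follows essentially the same route as the paper's: reduce to a single column, regroup the summands into the signed part and the balanced part (which collapses to one balanced term since $|a\oplus b|=|a|\oplus|b|$), and invoke Lemma~\ref{lem:resolve+balanced} to pass between the balanced inequality and the two signed resolutions. Your treatment is more detailed than the paper's four-sentence argument --- in particular you make explicit why only the two extreme sign patterns $a'$, $a''$ matter --- but the underlying idea is identical.
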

\begin{proof}
  Recalling Definition~\ref{eq:def+sep}, we see that it is enough to consider the transition from $A$ to $B$ column by column.
  An inequality arising from $A$ (after potentially reordering entries) is of the form $\trans{y} \odot (u,v) > \Zero$ with $u \in \TTpm^k$ and $v \in \Tzero^{d-k}$ for some $k \leq d$.
  It either remains the same in $B$ if it has no balanced entry or it gets transformed to two inequalities. 
  Regrouping the summands by balanced and signed numbers, we deduce the claim from the equivalence of~\eqref{eq:inequality+balanced} and~\eqref{eq:all+between+balanced} in Lemma~\ref{lem:resolve+balanced}. 
\end{proof}

\subsection{Fourier-Motzkin elimination}  \label{subsec:fourier-motzkin}

We derive three versions of Fourier-Motzkin elimination, which will be useful for deriving further description of signed tropically convex sets in Section~\ref{sec:halfspaces} and~\ref{sec:generators+in+orthants}.
As the elimination process produces balanced coefficients for the inequalities, it is convenient to have an elimination procedure which can directly deal with those (Theorem~\ref{thm:balanced+fourier+motzkin}).
We also need to derive explicit inequalities with signed coefficients (Corollary~\ref{cor:signed+fourier+motzkin}) to describe the dual convex hull in Section~\ref{sec:generators+in+orthants}.
The version with non-strict inequalities (Theorem~\ref{thm:non+strict+signed+fourier+motzkin}) will be used in constructing an exterior description by closed tropical halfspaces (Theorem~\ref{thm:signed+Minkowski+Weyl}).

\smallskip

For a subset $M$ of $\TTpm^d$, we define its coordinate projection $\rho_i(M)$ with $i \in [d]$ by
\begin{equation}
  \rho_i(M) = \{ (x_1,\ldots,x_{i-1},x_{i+1},\ldots,x_d) \mid (x_1,\ldots,x_d) \in M \} \enspace .
\end{equation}

We fix a matrix $A = (a_{ij}) \in \TSS^{d \times n}$.
The non-negative matrix $S^{(i)}$ in the group of signed tropical transformations from Remark~\ref{rem:transformation+group} with the sequence $(-|a_{ij}|)_j \in \Tgeq^n$ as diagonal scales the rows of $A$ such that the $i$th row of $A \odot S$ has only entries with absolute value $0$ or $\Zero$. 
As $S$ is non-negative, we directly obtain from the definitions in Section~\ref{subsec:dual+notions} the following.

\begin{lemma} \label{lem:transformation+normalized+row}
  There is a matrix $S^{(i)}\in \mathbb{D} \subset \TTpm^{d \times d}$ such that the entries in the $i$th row of $A \odot S^{(i)}$ have absolute value $0$ or $\Zero$, and
  $\nnker(A \odot S^{(i)}) = \nnker(A)$, $\sep(A \odot S^{(i)}) = \sep(A)$ and $\SetOf{y \in \TTpm^d}{\trans{y} \odot A \teq \Zero} = \SetOf{y \in \TTpm^d}{\trans{y} \odot S^{(i)}\odot A \teq \Zero}$.
\end{lemma}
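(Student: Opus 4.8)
The plan is to reduce every assertion to a single coordinate-wise fact: multiplication by a \emph{positive} scalar preserves signs. By construction $S^{(i)}$ is diagonal with $j$th diagonal entry $s_j = -|a_{ij}|$ (and $s_j = 0$ where $a_{ij} = \Zero$), so each $s_j \in \Tgt$ is non-negative and nonzero, i.e.\ $\tsgn(s_j) = \oplus$. The multiplication rule then gives $\tsgn(w \odot s_j) = \tsgn(w) * \oplus = \tsgn(w)$ for every $w \in \TSS$. Since each of the three relations ``$w > \Zero$'', ``$w \beq \Zero$'', ``$w \teq \Zero$'' constrains only $\tsgn(w)$ --- they say $\tsgn(w) = \oplus$, $\tsgn(w) \in \{\bullet,\Zero\}$, and $\tsgn(w) \neq \ominus$ respectively --- each is left invariant when $w$ is multiplied by the positive scalar $s_j$.

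With this in hand I would first verify the normalization of the $i$th row: using $|a \odot b| = |a| \odot |b|$, the $(i,j)$ entry of $A \odot S^{(i)}$ has absolute value $|a_{ij}| \odot |s_j| = |a_{ij}| \odot (-|a_{ij}|)$, which is $0$ if $a_{ij} \neq \Zero$ and $\Zero$ otherwise. For the two objects defined through $\trans{y} \odot A$, I would observe that the $j$th coordinate of $\trans{y} \odot (A \odot S^{(i)}) = (\trans{y} \odot A) \odot S^{(i)}$ equals $(\trans{y} \odot A)_j \odot s_j$; by the previous paragraph this coordinate is $> \Zero$ (respectively $\teq \Zero$) exactly when $(\trans{y} \odot A)_j$ is, and letting $j$ range over all columns yields $\sep(A \odot S^{(i)}) = \sep(A)$ together with the stated equality of the two $\teq$-sets. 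The identity $\nnker(A \odot S^{(i)}) = \nnker(A)$ is of the same flavour: a non-negative, invertible diagonal transformation preserves balancedness coordinate by coordinate, so it does not alter the kernel condition $A \odot x \beq \Zero$, where the invertibility of $S^{(i)}$ is what lets one pass in both directions.

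The steps are short, so the only places demanding attention are the following. First, the entries $a_{ij} = \Zero$, for which the tropical reciprocal is undefined: one must set $s_j = 0$ by hand and check that $S^{(i)}$ stays non-negative and invertible, so that the $\tsgn$-preservation argument applies uniformly. Second, and more delicate, the $\teq$-claim cannot be obtained by moving terms across the relation or by naive cancellation, which fails for $\teq$ as warned in Remark~\ref{rem:reformulate+inequality+order}; instead one argues directly from $\tsgn$-invariance, or equivalently invokes the affine monotonicity of $\teq$ under multiplication by the order-preserving scalar $s_j \in \Tgeq$ (Lemma~\ref{lem:basic+properties+teq}\eqref{item:teq+affine+monotony}) together with its inverse $-s_j$. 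I expect this $\teq$ case, rather than the purely sign-based $>$ and $\beq$ cases, to be the main obstacle.
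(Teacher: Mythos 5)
Your overall route is the same as the paper's: the paper offers no explicit proof of this lemma, saying only that since $S^{(i)}$ is non-negative the claims "follow directly from the definitions," and your sign-preservation argument --- $\tsgn(w \odot s_j) = \tsgn(w)$ for $s_j \in \Tgt$, together with the observation that $w > \Zero$, $w \beq \Zero$, $w \teq \Zero$ are conditions on $\tsgn(w)$ alone --- is exactly the reasoning being left implicit. Your treatment of the row normalization and of the two conditions phrased via $\trans{y} \odot A$ is correct, because there the diagonal matrix scales the \emph{output} coordinates: $\bigl(\trans{y} \odot (A \odot S^{(i)})\bigr)_j = (\trans{y} \odot A)_j \odot s_j$, so sign preservation applies to each coordinate of the row vector and yields genuine set equality for $\sep$ and for the $\teq$-set. (You also silently repair the paper's typo by putting $S^{(i)}$ on the right of $A$ in the $\teq$-claim; that is how the lemma is actually used in Theorem~\ref{thm:non+strict+signed+fourier+motzkin}.)

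There is, however, a genuine gap in your $\nnker$ paragraph: for the kernel, the diagonal matrix scales the \emph{variables}, not the outputs, since $(A \odot S^{(i)}) \odot x = A \odot (S^{(i)} \odot x)$. So "preserves balancedness coordinate by coordinate" is not the mechanism at work, and what invertibility of $S^{(i)}$ really gives is the substitution equivalence $x \in \nnker(A \odot S^{(i)}) \Leftrightarrow S^{(i)} \odot x \in \nnker(A)$, i.e.\ a bijection between the two kernels, not the stated set equality. The equality is in fact false in general: for the single-row matrix $A = (1, \ominus 0)$ and $i=1$ one gets $S^{(1)}$ with diagonal $(-1,0)$ and $A \odot S^{(1)} = (0, \ominus 0)$, whence $\nnker(A) = \SetOf{(t,\, 1 \odot t)}{t \in \RR}$ while $\nnker(A \odot S^{(1)}) = \SetOf{(t,t)}{t \in \RR}$, two different sets. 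To be fair, this defect sits in the paper's own statement as well, and its one-sentence proof glosses over the same point; moreover, every later use of the lemma (the Fourier--Motzkin steps and Farkas' lemma) only needs that $\nnker(A \odot S^{(i)})$ is empty if and only if $\nnker(A)$ is, which the bijection does provide. The honest repair of your proof is therefore to replace the claimed equality by this substitution argument and record preservation of (non)emptiness, rather than to assert coordinatewise invariance of the kernel condition.
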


We define several sets partitioning $[n]$ based on the sign of the entries in the $i$th row of $A$ by
\begin{equation} \label{eq:index+sets+signs}
  \begin{aligned}
    J^+ &= \{j \in [n] \mid a_{ij} > \Zero\}\,, &  J^- &= \{j \in [n] \mid a_{ij} < \Zero\} \\
    J^{\bullet} &= \{j \in [n] \mid a_{ij} \in \Tzero \setminus \{\Zero\}\}\,, & J^0 &= \{j \in [n] \mid a_{ij} = \Zero\} \enspace .
  \end{aligned}
\end{equation}

Furthermore, we define $T^{(i)} = (t_{j,p}) \in \{0,\Zero\}^{n \times ((J^+ \cup J^{\bullet}) \times (J^{\bullet} \cup J^-) \cup J^0)}$ as the incidence matrix
  \begin{equation} \label{eq:incidence+matrix+elimination}
  t_{j,p} =
  \begin{cases}
    0 & j = k \text{ or } j = \ell \text{ for } p = (k,\ell) \in (J^+ \cup J^{\bullet}) \times (J^{\bullet} \cup J^-)  \\
    0 & j = p \text{ for } p \in J^0 \\
    \Zero & \text{ else }
  \end{cases}
  \enspace .
  \end{equation}
  We denote by $A_{-i}$ the matrix obtained from $A$ by removing the last row. 

  \subsubsection{Strict inequalities}
  
\begin{theorem}[Fourier--Motzkin for strict inequalities with balanced numbers] \label{thm:balanced+fourier+motzkin}
  The $i$th coordinate projection of the open tropical cone $\rho_i(\sep(A))$ for the matrix $A \in \TSS^{d \times n}$ is the open tropical cone $\sep(A_{-i} \odot S^{(i)} \odot T^{(i)})$.
\end{theorem}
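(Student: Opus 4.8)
The goal is to show that the $i$-th coordinate projection of the open tropical cone equals the open tropical cone of the transformed matrix, i.e.\ $\rho_i(\sep(A)) = \sep(A_{-i} \odot S^{(i)} \odot T^{(i)})$. By Lemma~\ref{lem:transformation+normalized+row}, I may first replace $A$ by $A \odot S^{(i)}$ without changing $\sep(A)$, so I will assume from the outset that the $i$-th row of $A$ has entries only with absolute value $0$ or $\Zero$; that is, each $a_{ij} \in \{0, \ominus 0, \bullet 0, \Zero\}$ according to whether $j \in J^+, J^-, J^\bullet, J^0$ respectively. The membership $y \in \sep(A)$ means $\trans{y} \odot a_{*j} > \Zero$ for every column $j$, and I want to understand what constraints remain on the coordinates $(y_1,\dots,y_{i-1},y_{i+1},\dots,y_d)$ once $y_i$ is eliminated.

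\textbf{The core reduction.}
Write each inequality $\trans{y} \odot a_{*j} > \Zero$ by separating off the $i$-th term. Since $|a_{ij}| \in \{0,\Zero\}$ after normalization, the $i$-th contribution is $a_{ij} \odot y_i$, which equals $\Zero$ when $j \in J^0$, equals $y_i$ when $j \in J^+$, equals $\ominus y_i$ when $j \in J^-$, and equals $\bullet\, y_i$ (balanced) when $j \in J^\bullet$. Let $r_j := \bigoplus_{k \neq i} a_{kj} \odot y_k$ denote the part of the $j$-th inequality not involving $y_i$; note $r_j = (A_{-i} \odot y_{-i})_j$. For $j \in J^0$ the inequality is simply $r_j > \Zero$, independent of $y_i$. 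For $j \in J^+$ it reads $y_i \oplus r_j > \Zero$, which by Remark~\ref{rem:reformulate+inequality+order} is equivalent to $y_i > \ominus r_j$, i.e.\ a lower bound on $y_i$. For $j \in J^-$ it reads $\ominus y_i \oplus r_j > \Zero$, equivalent to $r_j > y_i$, an upper bound. For $j \in J^\bullet$, using Lemma~\ref{lem:resolve+balanced}, the inequality $\bullet y_i \oplus r_j > \Zero$ (which is $y_i \ominus y_i \oplus r_j > \Zero$) is equivalent to the pair $y_i \oplus r_j > \Zero$ \emph{and} $\ominus y_i \oplus r_j > \Zero$, i.e.\ $j$ simultaneously behaves as both a lower-bound and an upper-bound constraint.

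\textbf{Applying the elimination lemma.}
The projection $\rho_i(\sep(A))$ consists of those $y_{-i}$ for which there \emph{exists} a value $y_i$ satisfying all the lower bounds (from $J^+ \cup J^\bullet$) and all the upper bounds (from $J^- \cup J^\bullet$), together with the $y_i$-free constraints $r_j > \Zero$ for $j \in J^0$. Setting $A := \{\ominus r_k : k \in J^+ \cup J^\bullet\}$ as the set of lower-bound witnesses and $B := \{r_\ell : \ell \in J^- \cup J^\bullet\}$ as the upper-bound witnesses, Proposition~\ref{prop:elimination+system} tells me that such an intermediate $y_i$ exists precisely when $r_\ell \ominus(\ominus r_k) = r_\ell \oplus r_k > \Zero$ for all pairs $(k,\ell) \in (J^+ \cup J^\bullet) \times (J^\bullet \cup J^-)$. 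This is exactly the set of inequalities encoded by the incidence matrix $T^{(i)}$: each pair $p = (k,\ell)$ yields a column whose nonzero entries pick out rows $k$ and $\ell$, so that $\trans{y} \odot (A_{-i} \odot T^{(i)})_{*p} = r_k \oplus r_\ell$, and each $p \in J^0$ yields the single-column constraint $r_p > \Zero$. Thus the remaining conditions on $y_{-i}$ are precisely $\trans{(y_{-i})} \odot (A_{-i} \odot T^{(i)}) > \Zero$, which is membership in $\sep(A_{-i}\odot T^{(i)})$; since I normalized using $S^{(i)}$ at the start, this is $\sep(A_{-i} \odot S^{(i)} \odot T^{(i)})$.

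\textbf{Anticipated obstacle.}
The main delicacy is the bookkeeping around the balanced columns $J^\bullet$. Each such column must be treated as contributing to \emph{both} the lower-bound set and the upper-bound set simultaneously, which is why $J^\bullet$ appears on both sides of the product $(J^+ \cup J^\bullet) \times (J^\bullet \cup J^-)$ in the definition of $T^{(i)}$. I will need to verify carefully, via Lemma~\ref{lem:resolve+balanced}, that splitting a balanced constraint into its two signed halves is an exact equivalence and not merely an implication, and that Proposition~\ref{prop:elimination+system} (which guarantees a \emph{signed} intermediate $c$) legitimately yields an admissible $y_i \in \TTpm$. A secondary point requiring care is confirming that $\trans{y}\odot A > \Zero$ forces each $r_j$ to be comparable with $\Zero$ in the right way so that the reformulations of Remark~\ref{rem:reformulate+inequality+order} genuinely apply; I will lean on the fact that $> \Zero$ means the product lies in $\Tgt$, hence is comparable, throughout.
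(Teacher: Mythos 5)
Your proposal is correct and takes essentially the same route as the paper's proof: normalize the $i$th row via $S^{(i)}$ (Lemma~\ref{lem:transformation+normalized+row}), split each balanced column into its two signed halves via Lemma~\ref{lem:resolve+balanced}, and eliminate $y_i$ with Proposition~\ref{prop:elimination+system}, identifying the resulting pairwise sums $r_k \oplus r_\ell$ (together with the untouched $J^0$ constraints) as the columns of $A_{-i}\odot S^{(i)}\odot T^{(i)}$. Your additional bookkeeping about the $J^0$ columns and the signedness of the intermediate value only makes explicit what the paper's terser write-up leaves implicit.
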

\begin{proof}
  By using an appropriate scaling matrix $S^{(i)}$, we can assume by Lemma~\ref{lem:transformation+normalized+row} that the absolute value of each entry in the $i$th row of $A$ is either $0$ or $\Zero$ and that this does not affect $\rho_i(\sep(A))$. 

  To simplify notation, we additionally assume that $i = d$. 
  Using Lemma~\ref{lem:resolve+balanced} and ordering the inequalities according to the partition from~\eqref{eq:index+sets+signs}, we get the system
  \begin{equation} \label{eq:ordered+resolved+system}
    \begin{aligned}
      y_d \oplus (y_1,\dots,y_{d-1}) \odot \trans{(a_{1,j},\dots,a_{d-1,j})} > \Zero & \text{ for } j \in J^+ \cup J^{\bullet} \\
   \ominus y_d \oplus (y_1,\dots,y_{d-1}) \odot \trans{(a_{1,j},\dots,a_{d-1,j})} > \Zero & \text{ for } j \in J^- \cup J^{\bullet}
    \end{aligned}
  \end{equation}
  Proposition~\ref{prop:elimination+system} implies that~\eqref{eq:ordered+resolved+system} has a solution $(y_1,\dots,y_{d-1},y_d) \in \TTpm^d$ if and only if
  \begin{equation} \label{eq:summed+inequalities}
    \begin{aligned}
    (y_1,\dots,y_{d-1}) \odot A_{-d} \odot T > \Zero 
    \end{aligned}
  \end{equation}
  has a solution $(y_1,\dots,y_{d-1}) \in \TTpm^{d-1}$. 
\end{proof}

  We can resolve the balanced entries which can occur in the product with $T^{(i)}$ by means of Theorem~\ref{thm:replace+balanced}.

\begin{corollary}[Fourier--Motzkin for strict inequalities with signed numbers] \label{cor:signed+fourier+motzkin}
  The $i$th coordinate projection of the open tropical cone $\rho_i(\sep(A))$ is the open tropical cone $\sep\left(\xi(A_{-i} \odot S^{(i)} \odot T^{(i)})\right)$.
\end{corollary}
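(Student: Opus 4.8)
The plan is to chain the two results I am allowed to assume. Corollary~\ref{cor:signed+fourier+motzkin} asserts the equality
\[
\rho_i(\sep(A)) = \sep\left(\xi(A_{-i} \odot S^{(i)} \odot T^{(i)})\right),
\]
and the natural approach is to start from Theorem~\ref{thm:balanced+fourier+motzkin}, which already identifies $\rho_i(\sep(A))$ with the open tropical cone $\sep(A_{-i} \odot S^{(i)} \odot T^{(i)})$, and then apply Theorem~\ref{thm:replace+balanced} to replace the balanced coefficients by signed ones without changing the open cone. The whole argument is a one-line composition of two equalities, so the proof will be short.

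Concretely, I would set $B = A_{-i} \odot S^{(i)} \odot T^{(i)} \in \TSS^{(d-1) \times m}$, where $m$ is the number of columns of $T^{(i)}$. By Theorem~\ref{thm:balanced+fourier+motzkin} we have $\rho_i(\sep(A)) = \sep(B)$. The matrix $B$ is a genuinely balanced-coefficient matrix: the product with the incidence matrix $T^{(i)}$ sums up pairs of inequalities indexed by $(J^+ \cup J^{\bullet}) \times (J^{\bullet} \cup J^-)$, and whenever a row index $j \in J^{\bullet}$ participates, the resulting coefficient in that summed inequality may be balanced. This is exactly the situation that the map $\xi$ and Theorem~\ref{thm:replace+balanced} are designed to handle. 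Applying Theorem~\ref{thm:replace+balanced} with $B$ in the role of the matrix gives $\sep(B) = \sep(\xi(B))$, and combining this with the previous equality yields
\[
\rho_i(\sep(A)) = \sep(B) = \sep(\xi(B)) = \sep\left(\xi(A_{-i} \odot S^{(i)} \odot T^{(i)})\right),
\]
which is the claim.

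There is essentially no main obstacle here, since both ingredients are stated as results I may use; the only point requiring a sentence of care is the observation that Theorem~\ref{thm:replace+balanced} is stated for an arbitrary matrix in $\TSS^{d \times n}$ and therefore applies verbatim to $B$, regardless of whether $B$ actually contains balanced entries (if it does not, then $\xi(B) = B$ and the statement is trivially true for that column). The only substantive thing to confirm is that $\xi$ is applied to the already-formed product $A_{-i} \odot S^{(i)} \odot T^{(i)}$, matching the notation in the corollary statement, rather than to $A$ itself. With that identification made, the proof is a direct concatenation of the two cited statements.
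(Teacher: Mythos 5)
Your proposal is correct and is exactly the paper's argument: the paper derives this corollary by combining Theorem~\ref{thm:balanced+fourier+motzkin} (giving $\rho_i(\sep(A)) = \sep(A_{-i} \odot S^{(i)} \odot T^{(i)})$) with Theorem~\ref{thm:replace+balanced} applied to the product matrix, which is precisely your two-step composition. Your additional remark that $\xi$ acts on the already-formed product and that the statement is vacuous for columns without balanced entries matches the paper's intent as well.
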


\begin{example}
  We use the matrix
  \[
  A =
  \begin{pmatrix}
    3 & \ominus 1 & \ominus 4 \\
    3 & \ominus 0 & \ominus 2
  \end{pmatrix} \enspace ,
  \]
  from Figure~\ref{fig:first+hull}.

  Eliminating the first row yields
  \[
  \begin{pmatrix}
    3 & \ominus 0 & \ominus 2
  \end{pmatrix}
  \odot
  \begin{pmatrix}
    -3 & \Zero & \Zero \\
    \Zero & -1 & \Zero \\
    \Zero & \Zero & -4
  \end{pmatrix}
  \odot
  \begin{pmatrix}
    0 & 0 \\
    0 & \Zero \\
    \Zero & 0 
  \end{pmatrix}
  =
  \begin{pmatrix}
    0 & 0
  \end{pmatrix}
  \]
  This shows that $\sep^+(A)$ is not empty (as all entries of the remaining matrix are positive signed numbers), which can also be seen in Figure~\ref{fig:first+hull} using the duality in Theorem~\ref{thm:farkas+lemma}. 
  \end{example}

\begin{remark}
  The crucial difference to classical Fourier--Motzkin elimination happens in the treatment of balanced numbers occuring in the calculation.
  While classically in each step several variables could be eliminated at the same time we have to deal with their balanced
  left-overs. 
  For strict inequalities, Lemma~\ref{lem:resolve+balanced} provides a tool to resolve them by introducing two inequalities instead of one.
  We will see how to resolve them for non-strict inequalities in Proposition~\ref{prop:resolving+balanced+coefficiens+teq}.
\end{remark}

\begin{remark} \label{rem:connection+nonnegative+FM}
  The classical technique of Fourier-Motzkin for polytopes, see e.g.~\cite{ConfortiCornuejolsZambelli:2014} has already been successfully adapted to tropical linear inequality systems in~\cite{AllamigeonFahrenbergGaubertKatz:2014}.
  In the latter work, an algorithmic scheme to determine a projection of a tropical inequality system is described.
  In our Theorem~\ref{thm:balanced+fourier+motzkin}, we do not have the non-negatively constrained variables but allow arbitrary elements of $\TTpm$. 
  Classically, one can represent an unconstrained variable as the difference of a pair of non-negative variables. 
  Applying this technique to a system of the form $\trans{y} \odot A > \Zero$ with unconstrained $y \in \TTpm^n$ for a matrix $A \in \TTpm^{d \times n}$ yields the system
  \begin{equation}
    (\trans{u},\trans{v}) \odot
    \begin{pmatrix}
      A \\ \ominus A
    \end{pmatrix}\ >\ \Zero 
    \text{ with } u,v \in \Tgeq^d \enspace .
  \end{equation}
  Reordering terms with coefficients in $\Tlt$ to the other side of the inequality yields a system which allows to apply~\cite[Theorem 11]{AllamigeonFahrenbergGaubertKatz:2014}.
  However, the differences of non-negative variables are harder to resolve as there is no cancellation but it results in balanced entries.
  This makes our direct approach more tractable for unconstrained variables.
  Furthermore, we are also interested in the structure of the resulting inequalities in Section~\ref{sec:halfspaces}, whence our approach is more suitable for this setting. 
\end{remark}

The elimination procedure derived in the last section allows to prove the desired separation in $\TTpm^d$. 

\begin{proof}[Proof of Theorem~\ref{thm:farkas+lemma}]
  At first, we show the claim for $d = 1$. The set $\sep(A)$ is non-empty if and only if either all entries are positive or all entries are negative. Otherwise, we can select a balanced entry or a pair of entries with opposite sign by multiplication from the right.
  
  If $\sep(A)$ is not empty, then Theorem~\ref{thm:weak+duality} tells us that $\nnker(A)$ is empty. So, we assume that $\sep(A)$ is empty.
  As the scaling of the columns of $A$ does not change the sets $\sep(A)$ or $\nnker(A)$, we can assume that the absolute value of the entries in the last row of $A$ is $0$ or $\Zero$. 
  Let $T$ be the matrix from~\eqref{eq:incidence+matrix+elimination}.
  Then Theorem~\ref{thm:balanced+fourier+motzkin} shows that $\sep(A_{-d} \odot T)$ is empty.
  By induction, there is an element $z$ in $\nnker(A_{-d} \odot T)$.
  We show that $T \odot z \in \nnker(A)$.
  By definition of $T$, the elements in the $d$th row of $A \odot T$ are all in $\Tzero$.
  This implies that the $d$th row of $A \odot T \odot z$ is in $\Tzero$.
  Additionally, the choice of $z$ yields $A_{-d} \odot T \odot z \in \Tzero^{d-1}$.
  This finishes the proof. 
\end{proof}

\subsubsection{Non-strict inequalities}

While we mainly considered the strict inequalities as separators from the origin, in light of Theorem~\ref{thm:farkas+lemma}, we develop the elimination theory for non-strict inequalities with a view towards the representation of convex sets as intersection of closed halfspaces in Theorem~\ref{thm:signed+Minkowski+Weyl}.
Therefore, we add another coordinate to treat affine halfspaces, which only occured in Example~\ref{ex:open+halfspace+convex} so far.
Next, we derive the analogous statement to Theorem~\ref{thm:balanced+fourier+motzkin} for the relation `$\teq$' instead of `$>$'. 

Note that this is substantially more subtle than the case of strict inequalities.
We require the coefficient matrix of the inequalities to consist of signed numbers only.
However, as each elimination step can result in balanced coefficients, we again need a method to resolve those.
While this was rather easy for strict inequalities as shown in Theorem~\ref{thm:replace+balanced}, we need to develop a more technical and less explicit machinery in Proposition~\ref{prop:resolving+balanced+coefficiens+teq}. 

\todo[inline]{LV: Please fix. The matrix $T$ in the statement is only
  defined in the proof. The matrix $S$ is missing. For consistency,
  shall we make the claim for index $i$ instead of $d$?
  GL: I made several changes around this, please check. 
}

We recall the matrices $S^{(i)}$ from Lemma~\ref{lem:transformation+normalized+row} and $T^{(i)}$ from~\eqref{eq:incidence+matrix+elimination} with $d$ replaced by $d+1$ and $J^{\bullet} = \emptyset$.

\begin{theorem}[Fourier--Motzkin for non-strict inequalities with signed numbers] \label{thm:non+strict+signed+fourier+motzkin}
  The $i$th coordinate projection of the set
  \begin{equation}
  \SetOf{y \in \TTpm^d}{(0,\trans{y}) \odot A \teq \Zero}
  \end{equation}
  for $A \in \TTpm^{(d+1) \times n}$ is the set
  \begin{equation} \label{eq:non+strict+system+projection}
    \SetOf{z \in \TTpm^{d-1}}{(0,\trans{z}) \odot A_{-i}\odot S^{(i)} \odot T^{(i)} \teq \Zero} \enspace .
  \end{equation}
\end{theorem}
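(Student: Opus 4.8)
The plan is to follow the template of the strict case (Theorem~\ref{thm:balanced+fourier+motzkin}), replacing the relation `$>$' by `$\teq$' and Proposition~\ref{prop:elimination+system} by its non-strict counterpart Proposition~\ref{prop:elimination+system+non+strict}. First I would invoke Lemma~\ref{lem:transformation+normalized+row} to scale the columns by the non-negative matrix $S^{(i)}$, which preserves both the set $\SetOf{y}{(0,\trans{y})\odot A\teq\Zero}$ and its $i$th coordinate projection, and reduces to the situation where every entry of the $i$th row of $A\odot S^{(i)}$ has absolute value $0$ or $\Zero$. Reordering the coordinates of $y$ (leaving the affine coordinate untouched), I may assume $i=d$. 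Since $A$ is signed, its $d$th row has no balanced entry, so in the notation of~\eqref{eq:index+sets+signs} we have $J^{\bullet}=\emptyset$ and $[n]=J^+\cup J^-\cup J^0$.

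Next I would fix $z\in\TTpm^{d-1}$ and read the system $(0,\trans{y})\odot A\teq\Zero$ as a family of constraints on the single remaining variable $y_d$. Writing $r_j$ for the $j$th entry of $(0,\trans{z})\odot A_{-d}\odot S^{(d)}$, that is, the part of the $j$th inequality not involving $y_d$, the normalization makes the $J^+$-inequalities read $y_d\oplus r_k\teq\Zero$ and the $J^-$-inequalities read $\ominus y_d\oplus r_\ell\teq\Zero$, whereas the $J^0$-inequalities read $r_j\teq\Zero$ and do not involve $y_d$. Applying Lemma~\ref{lem:basic+properties+teq}\eqref{item:teq+other+side} to move $y_d$ across the relation turns these into the two-sided bound $r_\ell\teq y_d\teq\ominus r_k$ for $k\in J^+$ and $\ell\in J^-$.

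I would then apply Proposition~\ref{prop:elimination+system+non+strict}, with the two finite sets of that statement played by $\SetOf{\ominus r_k}{k\in J^+}$ and $\SetOf{r_\ell}{\ell\in J^-}$: a signed value $y_d\in\TTpm$ extending $z$ to a solution exists if and only if $r_\ell\teq\ominus r_k$ for all such $k,\ell$, equivalently $r_k\oplus r_\ell\teq\Zero$. Together with the untouched constraints $r_j\teq\Zero$ for $j\in J^0$, these are exactly the entries of $(0,\trans{z})\odot A_{-d}\odot S^{(d)}\odot T^{(d)}$ being $\teq\Zero$: by the definition of the incidence matrix~\eqref{eq:incidence+matrix+elimination} with $J^{\bullet}=\emptyset$, the column $p=(k,\ell)$ of $T^{(d)}$ sums columns $k$ and $\ell$ to produce $r_k\oplus r_\ell$, while the column $p=j\in J^0$ reproduces $r_j$. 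This yields the claimed equality of the projection with the set~\eqref{eq:non+strict+system+projection}.

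The main obstacle is precisely the point where the argument departs from the strict case: since `$\teq$' is not transitive (Remark~\ref{rem:not+equivalence}), one cannot simply chain $r_\ell\teq y_d\teq\ominus r_k$ to eliminate $y_d$, so the existence of a valid intermediate value must be drawn from Proposition~\ref{prop:elimination+system+non+strict}, whose content is exactly that the eliminated quantity can be realized by a \emph{signed} $y_d\in\TTpm$, as demanded by $y\in\TTpm^d$. I would also flag that, in contrast to the strict case handled by Theorem~\ref{thm:replace+balanced}, the product $A_{-d}\odot S^{(d)}\odot T^{(d)}$ may acquire balanced entries from cancellations when a $J^+$ column is summed with a $J^-$ column; this theorem only characterizes the projection, and converting these balanced coefficients back to signed ones is the separate and more delicate task deferred to Proposition~\ref{prop:resolving+balanced+coefficiens+teq}.
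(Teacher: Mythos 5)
Your proposal is correct and follows essentially the same route as the paper's proof: normalize the $i$th row via $S^{(i)}$ (Lemma~\ref{lem:transformation+normalized+row}), assume $i=d$, split the columns by the sign partition~\eqref{eq:index+sets+signs} with $J^{\bullet}=\emptyset$, and invoke Proposition~\ref{prop:elimination+system+non+strict} to characterize when a signed $y_d$ exists, which is exactly the system given by $T^{(d)}$. Your additional observations---the explicit use of Lemma~\ref{lem:basic+properties+teq}\eqref{item:teq+other+side} to isolate $y_d$, the explicit treatment of the $J^0$ columns, and the flag that balanced coefficients in $A_{-d}\odot S^{(d)}\odot T^{(d)}$ are deferred to Proposition~\ref{prop:resolving+balanced+coefficiens+teq}---are all consistent with (and somewhat more careful than) the paper's terser argument.
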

\begin{proof}
  Using Lemma~\ref{lem:transformation+normalized+row}, we can assume that the absolute value of each entry in the $i$th row of $A$ is either $0$ or $\Zero$ by using an appropriate scaling matrix $S^{(i)}$.

  To simplify notation, we additionally assume that $i = d$. 
Ordering the inequalities according to the distinction from~\eqref{eq:index+sets+signs}, we get the system
  \begin{equation} \label{eq:ordered+teq+system}
    \begin{aligned}
      y_d \oplus (0,y_1,\dots,y_{d-1}) \odot \trans{(a_{0,j},a_{1,j},\dots,a_{d-1,j})} \teq \Zero & \text{ for } j \in J^+ \\
   \ominus y_d \oplus (0,y_1,\dots,y_{d-1}) \odot \trans{(a_{0,j},a_{1,j},\dots,a_{d-1,j})} \teq \Zero & \text{ for } j \in J^- 
    \end{aligned}
  \end{equation}
  Proposition~\ref{prop:elimination+system+non+strict} implies that~\eqref{eq:ordered+teq+system} has a solution $(0,y_1,\dots,y_{d-1},y_d) \in \TTpm^{d+1}$ if and only if
  \begin{equation} \label{eq:summed+teq+inequalities}
    \begin{aligned}
    (0,y_1,\dots,y_{d-1}) \odot A_{-d} \odot S^{(d)}\odot T^{(d)} \teq \Zero 
    \end{aligned}
  \end{equation}
  has a solution $(0,y_1,\dots,y_{d-1}) \in \TTpm^{d}$. 
\end{proof}

\begin{example}
  We will see how to obtain an exterior description by closed halfspaces in Theorem~\ref{thm:signed+Minkowski+Weyl}.
  To determine the exterior description of the one dimensional line segment from $\ominus 0$ to $1$ in $\TTpm$, one can eliminate $x_1$ and $x_2$ from the system
  \begin{subequations}
    \begin{align}
      \ominus 0 \odot x_1 \oplus 1 \odot x_2 \ominus z \teq \Zero \label{eq:sub+1} \\
      0 \odot x_1 \ominus 1 \odot x_2 \oplus z \teq \Zero \label{eq:sub+2} \\
      0 \odot x_1 \oplus 0 \odot x_2 \ominus 0 \teq \Zero \label{eq:sub+3} \\
      \ominus 0 \odot x_1 \ominus 0 \odot x_2 \oplus 0 \teq \Zero \label{eq:sub+4} \\
      0\odot x_1 \teq \Zero \label{eq:sub+5} \\
      0 \odot x_2 \teq \Zero \label{eq:sub+6}
    \end{align}
  \end{subequations}
  Eliminating $x_1$ yields
  \begin{subequations} \label{eq:x1+eliminated}
    \begin{align}
       \bullet 1 \odot x_2 \bullet z &\teq \Zero \qquad \text{ from } \eqref{eq:sub+1} \& \eqref{eq:sub+2} \\
       1 \odot x_2 \ominus z \ominus 0 &\teq \Zero \qquad \text{ from } \eqref{eq:sub+1} \& \eqref{eq:sub+3} \label{eq:sub+a}\\
       1 \odot x_2 \ominus z &\teq \Zero \qquad \text{ from } \eqref{eq:sub+1} \& \eqref{eq:sub+5} \\
       \ominus 1 \odot x_2 \oplus z \oplus 0 &\teq \Zero \qquad \text{ from } \eqref{eq:sub+4} \& \eqref{eq:sub+2} \label{eq:sub+b}\\
       \bullet 0 \odot x_2 \bullet 0 &\teq \Zero \qquad \text{ from } \eqref{eq:sub+4} \& \eqref{eq:sub+3}\\
       \ominus 0 \odot x_2 \oplus 0 &\teq \Zero \qquad \text{ from } \eqref{eq:sub+4} \& \eqref{eq:sub+5} \label{eq:sub+c}\\
       0 \odot x_2 &\teq \Zero \qquad \text{ from } \eqref{eq:sub+6} \label{eq:sub+d}
    \end{align}
  \end{subequations}
  From further elimination of $x_2$ we get by ignoring redundant inequalities of~\eqref{eq:x1+eliminated}
  \begin{subequations}
    \begin{align}
      \bullet z \bullet 0 &\teq \Zero \qquad \text{ from } \eqref{eq:sub+a} \& \eqref{eq:sub+b} \\
      \ominus (-1) \odot z \oplus 0 &\teq \Zero \qquad \text{ from } \eqref{eq:sub+a} \& \eqref{eq:sub+c}  \\
      (-1) \odot z \oplus (-1) &\teq \Zero \qquad \text{ from } \eqref{eq:sub+d} \& \eqref{eq:sub+b}  \\
      0 &\teq \Zero \qquad \text{ from } \eqref{eq:sub+d} \& \eqref{eq:sub+c} 
    \end{align}
  \end{subequations}
  This yields the exterior description $z \teq \ominus 0$ and $z \teq \ominus 1$. 
\end{example}

\subsection{On the different forms of tropical LP}
For (regular) linear programs, solving all standard forms such as $Ax=b$,
$x\ge 0$ or $Ax\le b$ are polynomial-time equivalent. For example, consider
a system in the second form with $A\in \mathbb{R}^{m\times n}, b \in \mathbb{R}^{m}$. By using
variables  $x',x''\in \mathbb{R}^n$ and $s\in \mathbb{R}^m$, we can write a system in
the first form as $(A,-A,I_m)(x',x'',s)=b$, $(x',x'',s)\ge 0$. From a
feasible solution $(x',x'',s)$, we can recover a feasible solution
$x=x'-x''$ to the original system.

The situation appears to be different for tropical LP. Consider
systems of the form $A\odot x\beq b$, $x\geq \Zero$, $x\in \TTpm$ and $A\odot
x\seq b$, $x\in \TTpm$; in both cases, we need to find a nonzero solution.
The first system asks for
$\nnker \begin{pmatrix}
    A & \ominus b \\
    0 & \ominus 0
  \end{pmatrix}
\neq \emptyset$, and is
essentially equivalent to solving mean payoff games
\eqref{eq:tropical+linear+inequality+system}. Deciding whether this
system is feasible is in the complexity class NP$\cap$coNP; this can
be seen from Farkas' lemma (Theorem~\ref{thm:farkas+lemma}), and an
additional argument showing that if one of the respective systems is
feasible, there is a solution of bit complexity bounded in the bit
complexity of the input.

In contrast, for finding a nonzero solution to the system $A\odot
x\seq b$,  $x\in \TTpm$, showing membership in NP is easy, but membership in coNP is unclear. We do not obtain a
variant of Farkas' lemma for this system via the above Fourier-Motzkin
elimination. The construction analogous to the classical case would
transform it to $(A,\ominus A,I_m)\odot (x',x'',s)\beq b$,
$(x',x'',s)\geq \Zero$.  However, the solution $x=x'\ominus x''$ may
contain balanced entries, and it may not be possible to resolve 
these entries to get a feasible solution.

\begin{example}
  We consider the system
  \begin{equation*}
    \begin{pmatrix}
      0 & 0 \\
      \ominus 0 & 0 \\
      0 & \ominus 0 \\
      \ominus 0 & \ominus 0
    \end{pmatrix}
    \odot
    \begin{pmatrix} x_1 \\ x_2 \end{pmatrix} \seq
    \begin{pmatrix} \ominus 0 \\ \ominus 0 \\ \ominus 0 \\ \ominus 0 \end{pmatrix} \enspace .
  \end{equation*}
  It does not have a signed solution.
  However the extended system
  \begin{equation*}
    \begin{pmatrix}
      0 & 0 & \ominus 0 & \ominus 0 & 0 & \Zero & \Zero & \Zero \\
      \ominus 0 & 0 & 0 & \ominus 0 & \Zero & 0 & \Zero & \Zero \\
      0 & \ominus 0 & \ominus 0 & 0 & \Zero & \Zero & 0 & \Zero \\
      \ominus 0 & \ominus 0 & 0 & 0 & \Zero & \Zero & \Zero & 0
    \end{pmatrix}
    \odot
    \begin{pmatrix} x'_1 \\ x'_2 \\ x''_1 \\ x''_2 \\ s_1 \\ s_2 \\ s_3 \\ s_4 \end{pmatrix} \beq \begin{pmatrix} \ominus 0 \\ \ominus 0 \\ \ominus 0 \\ \ominus 0 \end{pmatrix}\, ,\qquad x',x'',s \geq \Zero
  \end{equation*}
  has the solution $(0,0,0,0,0,0,0,0)$. 
\end{example}

It is unclear whether the system  $A\odot
x\seq b$,  $x\in \TTpm$ can be reduced to tropical LPs with
nonnegative variables, or if it constitutes a more general problem.
\section{Description by halfspaces} \label{sec:halfspaces}


An important property of classical polytopes is the duality between the representation as convex hull and as intersection of finitely many halfspaces.
This is more subtle for tropical polytopes over $\TTpm$.
While we establish a description as intersection of open tropical
halfspaces (Theorem~\ref{thm:outer+hull+open+halfspaces}) containing a
set of points, we need additional properties to formulate a
Minkowski-Weyl theorem (Theorem~\ref{thm:signed+Minkowski+Weyl}). All
proofs are deferred to Section~\ref{sec:halfspace+proofs}.

\smallskip

We saw already in Examples~\ref{ex:open+halfspace+convex} that open halfspaces are convex. 
 The \emph{outer hull} of a set $M \subseteq \TTpm^d$ is the intersection of its containing open halfspaces 
\begin{equation} \label{eq:intersection+open+halfspaces}
  \bigcap_{M \subseteq \HC^+(a)} \HC^+(a) \enspace .
\end{equation}

This construction yields the same as our key Definition~\ref{def:inner+hull}.

\begin{theorem} \label{thm:outer+hull+open+halfspaces}
  The outer and the inner hull of a matrix $A \in \TTpm^{d \times n}$ coincide, i. e., $\tconv(A) = \bigcap_{A \subseteq \HC^+(v)} \HC^+(v)$, where we identify $A$ with the set of its columns.
\end{theorem}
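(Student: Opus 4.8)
The plan is to establish the two inclusions separately, with $\tconv(A)\subseteq\bigcap_{A\subseteq\HC^+(v)}\HC^+(v)$ being the routine direction and the reverse being the heart of the matter. For the easy inclusion I would fix an arbitrary open halfspace $\HC^+(v)$ with $A\subseteq\HC^+(v)$. By Example~\ref{ex:open+halfspace+convex} the set $\HC^+(v)$ is tropically convex, so $\tconv(\HC^+(v))=\HC^+(v)$. Since $\tconv(\cdot)$ is monotone (immediate from the union over finite subsets in Definition~\ref{def:inner+hull}) and idempotent (Proposition~\ref{prop:convex+hull+closure}), $A\subseteq\HC^+(v)$ gives $\tconv(A)\subseteq\tconv(\HC^+(v))=\HC^+(v)$. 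Intersecting over all such $v$ yields the inclusion.

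For the reverse inclusion I would argue by contraposition: given $b\notin\tconv(A)$, I would produce a single open halfspace that contains every column of $A$ but omits $b$, which certifies $b\notin\bigcap_{A\subseteq\HC^+(v)}\HC^+(v)$. The entry point is Proposition~\ref{prop:equivalence+hull+solution}, which says $b\notin\tconv(A)$ is equivalent to $\nnker\begin{pmatrix} A & \ominus b \\ 0 & \ominus 0\end{pmatrix}=\emptyset$. Farkas' lemma (Theorem~\ref{thm:farkas+lemma}) then guarantees that the companion set is nonempty, i.e.\ there is $y\in\TTpm^{d+1}$ with $\trans{y}\odot\begin{pmatrix} A & \ominus b \\ 0 & \ominus 0\end{pmatrix}>\Zero$ componentwise.

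It remains to translate this separator into a halfspace. Writing $y=(v,c)$ with $v\in\TTpm^d$ and $c\in\TTpm$ the coordinate paired with the appended row, the column-wise positivity unpacks into $\trans{v}\odot a^{(j)}\oplus c>\Zero$ for every column $a^{(j)}$ of $A$, together with $\ominus(\trans{v}\odot b\oplus c)>\Zero$ from the last column. Since $\ominus z\in\Tgt$ exactly when $z\in\Tlt$, the latter reads $\trans{v}\odot b\oplus c<\Zero$. I would then read off the separating halfspace $\HC^+(a)$ with coefficient vector $a=(c,v_1,\dots,v_d)$, so that $x\in\HC^+(a)$ iff $c\oplus\trans{v}\odot x>\Zero$. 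The first batch of inequalities says precisely that each column of $A$ lies in $\HC^+(a)$, while $\trans{v}\odot b\oplus c<\Zero$ excludes $b$, because $<\Zero$ and $>\Zero$ are mutually exclusive. This gives the separation and hence the reverse inclusion.

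The substantive content is imported from Farkas' lemma, so the main obstacle is not a new construction but careful bookkeeping. One must check that the appended row $(0,\ominus 0)$ in the augmented matrix plays its dual roles correctly: on the kernel side it enforces the normalization $\bigoplus_j x_j=0$ (already handled inside Proposition~\ref{prop:equivalence+hull+solution}), and on the separator side the strict relation $>\Zero$ against the negated column $\begin{pmatrix}\ominus b\\\ominus 0\end{pmatrix}$ must yield the strict exclusion $b\notin\HC^+(a)$. Because open halfspaces are defined through the strict order $>$, no boundary ambiguity can occur; this is exactly the feature that makes the open (as opposed to closed) halfspace description go through cleanly, in contrast to Remark~\ref{rem:closed+tropical+halfspaces+hull}.
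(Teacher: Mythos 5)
Your proposal is correct and takes essentially the same route as the paper's proof: the easy inclusion via the tropical convexity of open halfspaces (Example~\ref{ex:open+halfspace+convex}), and the reverse inclusion by combining Proposition~\ref{prop:equivalence+hull+solution} with Farkas' lemma (Theorem~\ref{thm:farkas+lemma}) to extract a signed separator that yields a halfspace containing all columns of $A$ but excluding the given point. The only cosmetic differences are that you argue by contraposition rather than contradiction and place the normalization row $(0,\ominus 0)$ at the bottom of the augmented matrix instead of the top, neither of which affects the argument.
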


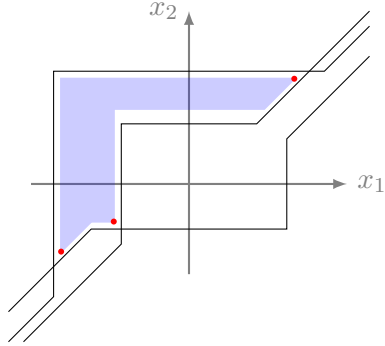
\begin{figure}[ht]
  \begin{tikzpicture}

  \Koordinatenkreuz{-2.1}{2.1}{-1.2}{2.3}{$x_1$}{$x_2$}

  \coordinate (A) at (1.4,1.4){};
  \coordinate (B) at (-1.7,-0.9){};
  \coordinate (C) at (-1.0,-0.5){};
  \coordinate (AB) at (-1.7,1.4){};
  \coordinate (AC1) at (-1.0,1.0){};
  \coordinate (AC2) at (1.0,1.0){};
  \coordinate (BC) at (-1.3,-0.5){};
  \coordinate (BCneg) at ($-1*(BC)$){};
  \coordinate (ABneg) at ($-1*(AB)$){};

  \draw[fill,Linesegment,opacity=0.2] (A) -- (AB) -- (B) -- (BC) -- (C) -- (AC1) -- (AC2) -- cycle;

  \node[Endpoint] at (A){};
  \node[Endpoint] at (B){};
  \node[Endpoint] at (C){};

  \newcommand\meps{0.1}
  \coordinate (apex11) at ($(AC1) + (\meps,-2*\meps)$);
  \coordinate (apex12) at ($(AC2) + (-\meps,-2*\meps)$);
  \coordinate (apex13) at ($-1*(AC2) + (\meps,2*\meps)$);
  \coordinate (apex14) at ($(apex12)+(1.5,1.5)$);
  \coordinate (apex15) at ($(apex13)+(-1.3,-1.3)$);
  
  \draw[] (apex15) -- (apex13) -- (apex11) -- (apex12) -- (apex14);

  \coordinate (apex21) at ($(BC) + (0,-\meps)$);
  \coordinate (apex22) at ($(BC -| BCneg) + (0,-\meps)$);
  \coordinate (apex23) at ($(BCneg) + (0,+\meps)$);
  \coordinate (apex24) at ($(apex21)+(-1.1,-1.1)$);
  \coordinate (apex25) at ($(apex23)+(1.1,1.1)$);

  \draw[] (apex24) -- (apex21) -- (apex22) -- (apex23) -- (apex25);

  \coordinate (apex31) at ($(AB) + (-\meps,\meps)$);
  \coordinate (apex32) at ($(AB -| ABneg) + (\meps,\meps)$);
  \coordinate (apex33) at ($(AB |- ABneg) + (-\meps,-\meps)$);
  \coordinate (apex34) at ($(apex32)+(0.6,0.6)$);
  \coordinate (apex35) at ($(apex33)+(-0.6,-0.6)$);

 \draw[] (apex35) -- (apex33) -- (apex31) -- (apex32) -- (apex34);
  
\end{tikzpicture}
  \caption{Approximation of a triangle by open halfspaces}
\end{figure}

As the latter does not give a finite description, one is usually more interested in closed halfspaces. For a vector $(a_0,a_1,\dots,a_d) \in \TTpm^{d+1}$, we define a \emph{closed (signed) tropical halfspace} by
\begin{equation}
  \Hclosed^+(a) = \SetOf{x \in \TTpm^d}{a \odot \begin{pmatrix} 0 \\ x \end{pmatrix} \in \Tgeq \cup \Tzero} \enspace .
\end{equation}

\begin{lemma} \label{lem:relation+open+closed+halfspaces}
  The closed signed tropical halfspace $\Hclosed^+(a)$ is the topological closure of the open signed halfspace $\HC^+(a)$.
\end{lemma}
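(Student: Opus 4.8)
The plan is to transport the whole statement to Euclidean space. Recall (cf. the discussion preceding Figure~\ref{fig:separators+plane}) that applying $x \mapsto \tsgn(x)\exp(|x|)$ coordinatewise gives a homeomorphism $\psi \colon \TTpm^d \to \RR^d$, which intertwines the topology of $\TTpm^d$ with the Euclidean one and turns each product $a_i \odot x_i$ into an ordinary product. Set $u = \psi(x)$, $\alpha_i = \psi(a_i)$ (so $\alpha_i = 0$ exactly when $a_i = \Zero$), and let $v_0 = \alpha_0$ and $v_i = \alpha_i u_i$ for $i \in [d]$. Then the tropical sign of $c(x) := a_0 \oplus \bigoplus_{i=1}^d a_i \odot x_i$ is read off from the two continuous functions
\[
P(u) = \max(v_0, v_1, \ldots, v_d), \qquad N(u) = \max(-v_0, -v_1, \ldots, -v_d),
\]
namely $c(x)$ is positive, balanced (including $\Zero$), or negative according to whether $P(u) > N(u)$, $P(u) = N(u)$, or $P(u) < N(u)$. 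Consequently $\HC^+(a) = \psi^{-1}(\{P > N\})$ and $\Hclosed^+(a) = \psi^{-1}(\{P \ge N\})$, and it suffices to prove $\overline{\{P > N\}} = \{P \ge N\}$ in $\RR^d$.

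One inclusion is immediate: since $P$ and $N$ are continuous, $\{P \ge N\}$ is closed and contains $\{P > N\}$, which gives $\overline{\HC^+(a)} \subseteq \Hclosed^+(a)$. Throughout I assume $a \ne \Zero$, the standard nondegeneracy for a halfspace; for $a = \Zero$ one has $\HC^+(a) = \emptyset$ while $\Hclosed^+(a) = \TTpm^d$, so that case must be excluded.

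For the reverse inclusion I would fix $u_0$ with $P(u_0) = N(u_0) =: m \ge 0$ and produce points arbitrarily close to $u_0$ satisfying $P > N$, by breaking the tie with a small coordinate perturbation. If $m > 0$, both a positive value $v_j = m$ and a negative value $v_{j'} = -m$ attain the maximal modulus; I would tilt $u_0$ slightly so that the positive side strictly wins, either by increasing a single positive maximizer $v_i$ (with $i \ge 1$) above $m$, or by simultaneously decreasing every negative maximizer (all of index $\ge 1$ in that situation) towards $0$. Each such move changes only its own coordinate and hence does not affect the opposite side, so the outcome is $P > N$. If $m = 0$, then all $v_j = 0$, forcing $\alpha_0 = 0$; using $a \ne \Zero$, some $\alpha_{i^*} \ne 0$ with $i^* \ge 1$, and moving $u_{i^*}$ off $0$ in the sign of $\alpha_{i^*}$ yields $P > 0 = N$. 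In every case the perturbation can be taken arbitrarily small, so $u_0 \in \overline{\{P > N\}}$.

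The step needing the most care, and the main obstacle, is exactly this tie-breaking where $P = N$: the constant term $v_0 = \alpha_0$ is not perturbable, so one must verify that a perturbable maximizer of the desired sign is always available. The key observation is that $v_0$ can be a maximizer on at most one of the two sides, so the other side has all its maximizers among the coordinates $i \ge 1$ and can be adjusted freely; a short case analysis on whether $v_0$ realizes $P$, realizes $N$, or neither then guarantees that one of the two moves above always applies. This closes the density argument and hence the lemma.
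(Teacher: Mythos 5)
Your proof is correct, and its engine is the same perturbation/density idea as the paper's, but it is packaged differently and carried out more carefully. The paper argues directly in $\TTpm^d$: for $z \in \Hclosed^+(a) \setminus \HC^+(a)$ it picks an index $\ell$ of maximal absolute value among the terms $a_j \odot z_j$ (with $z_0 = 0$) whose term is positive, and perturbs $z$ by $\frac{1}{m}\cdot e_\ell$, with the convention $e_0 = (-1,\dots,-1)$ for the case that the positive maximizer is the constant term. Your two tie-breaking moves --- raising a single positive maximizer of index $\geq 1$, or pulling every negative maximizer toward $0$ when $v_0$ realizes $P$ --- are exactly this dichotomy, transported to $\RR^d$ through the homeomorphism $\psi$. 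What the transfer buys you is rigor at the degenerate spots the paper glosses over: (i) when $a \odot (0,z)$ is exactly $\Zero$ (all terms $\Zero$), the paper's prescribed $\ell$ with $a_\ell \odot z_\ell > \Zero$ does not exist and its proof breaks, whereas your $m=0$ case covers this and shows it is precisely where $a \neq \Zero$ is needed; (ii) you correctly flag that for $a = \Zero$ the lemma is literally false, since $\HC^+(a) = \emptyset$ while $\Hclosed^+(a) = \TTpm^d$, a nondegeneracy hypothesis absent from the paper's statement; (iii) working in $\RR^d$ also sidesteps the fact that the paper's classical perturbation $z + \frac{1}{m}\cdot e_\ell$ is only meaningful as written on $\Tgeq^d$ (indeed the paper's proof restricts to $z \in \Tgeq^d$, though the claim concerns all of $\TTpm^d$). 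In short: same underlying idea, but your version is the more complete argument, and your observation about $a = \Zero$ is a genuine (if minor) erratum for the lemma as stated.
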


Observe that the former statement is wrong for inequalities with balanced numbers as coefficients. 

\begin{remark} \label{rem:closed+tropical+halfspaces+hull}
  Closed tropical halfspaces are not as suitable for the hull construction in Equation~\eqref{eq:intersection+open+halfspaces} as open tropical halfspaces. 
  The inner hull of $M = \{(\ominus 1, 1), (1, \ominus 1)\}$ should contain the origin $\Zero$.
  However, the closed halfspace $x_1 \oplus x_2 \geq \ominus 0$ contains those two points but not the origin.
  Taking the analogous intersection as in~\eqref{eq:intersection+open+halfspaces} with closed tropical halfspaces for $M$ yields again $M$.
  Note that this is the same as the intersection of the balanced image $M \odot \Delta_2$ with $\TTpm^2$.
\end{remark}

Example~\ref{ex:coordinate+hyperplane} shows that such a closed signed tropical halfspace is in general not tropically convex.
However, a finite intersection of such halfspaces can be tropically convex, as Figure~\ref{fig:closed+halfspaces+line+segment} shows. 

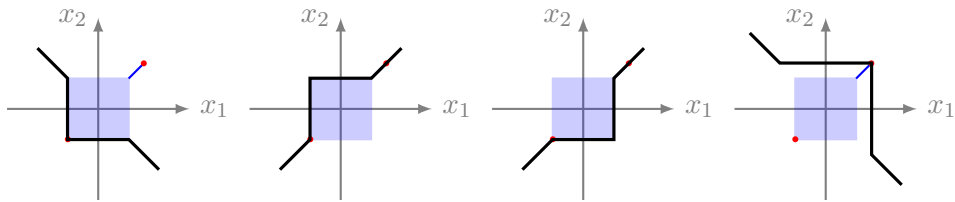
\begin{figure} 
  \newcommand\basicsetforext{
  \Koordinatenkreuz{-3}{3}{-3}{3}{$x_1$}{$x_2$}

  \coordinate (center) at (1.5,1.5){};
  \coordinate (centerneg) at ($-1*(center)$){};
  \coordinate (pp) at (1,1){};
  \coordinate (mp) at (-1,1){};
  \coordinate (pm) at (1,-1){};
  \coordinate (mm) at (-1,-1){};
  
  \draw[Linesegment] (center) -- (pp);
  \draw[fill,Linesegment,opacity=0.2] (pp) -- (mp) -- (mm) -- (pm) -- cycle;

  \node[Endpoint] at (center){};
  \node[Endpoint] at (mm){};
}
\newcommand\neps{0.01}
\newcommand\raylen{1}
\newcommand\scalefactor{0.404}

  \begin{tikzpicture}[scale = \scalefactor]
    \basicsetforext
  \draw[ClosedHalfspace] ($(mp) + (-\raylen,\raylen)$) -- ($(mp) + (-\neps,\neps)$) -- ($(mm) + (-\neps,-\neps)$) -- ($(pm) + (\neps,-\neps)$) -- ($(pm) + (\raylen,-\raylen)$);
\end{tikzpicture}
  \begin{tikzpicture}[scale = \scalefactor]
    \basicsetforext
  \draw[ClosedHalfspace] ($(pp) + (\raylen,\raylen)$) -- ($(pp) + (\neps,\neps)$) -- ($(mp) + (-\neps,\neps)$) -- ($(mm) + (-\neps,-\neps)$) -- ($(mm) + (-\raylen,-\raylen)$);
\end{tikzpicture}
  \begin{tikzpicture}[scale = \scalefactor]
    \basicsetforext
  \draw[ClosedHalfspace] ($(pp) + (\raylen,\raylen)$) -- ($(pp) + (\neps,\neps)$) -- ($(pm) + (\neps,-\neps)$) -- ($(mm) + (-\neps,-\neps)$) -- ($(mm) + (-\raylen,-\raylen)$);
\end{tikzpicture}
  \begin{tikzpicture}[scale = \scalefactor]
    \basicsetforext
  \draw[ClosedHalfspace] ($(center -| centerneg) + (-\raylen,\raylen)$) -- ($(center -| centerneg) + (-\neps,\neps)$) -- ($(center) + (\neps,\neps)$) -- ($(center |- centerneg) + (\neps,-\neps)$) -- ($(center |- centerneg) + (\raylen,-\raylen)$);
\end{tikzpicture}
  \caption{Exterior description of $\tconv((\ominus 1, \ominus 1),(2,2))$ by closed halfspaces. }
  \label{fig:closed+halfspaces+line+segment}
\end{figure}

Hence, to arrive at a Minkowski-Weyl theorem for tropical polytopes over $\TTpm$, one has to adapt the condition in the characterization of finite intersections of closed tropical halfspaces. 

\begin{theorem} \label{thm:signed+Minkowski+Weyl}
  For each finite set $V \subset \TTpm^d$, there are finitely many closed tropical halfspaces $H$ such that $\tconv(V)$ is the intersection of the halfspaces.

  \smallskip

  For each finite set $H$ of closed halfspaces, whose intersection $M$ is tropically convex, there is a finite set of points $V \in \TTpm^d$ such that $M = \tconv(V)$. 
\end{theorem}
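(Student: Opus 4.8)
The plan is to establish the two directions separately: the first (from a convex hull to an intersection of halfspaces) by Fourier--Motzkin elimination, the second (from an intersection of halfspaces back to a convex hull) by reducing to the classical tropical Minkowski--Weyl theorem orthant by orthant.

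\smallskip

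For the first direction, I would begin from the description of membership $z\in\tconv(V)$ by an auxiliary system. Observing that $\Tgeq\cup\Tzero$ and $\Tleq\cup\Tzero$ meet exactly in $\Tzero$, the balance relation splits as $a\beq b\Leftrightarrow (a\teq b)\wedge(b\teq a)$; hence each condition $z_i\beq(V\odot x)_i$ from Definition~\ref{def:inner+hull} can be rewritten as the two non-strict inequalities $z_i\ominus(V\odot x)_i\teq\Zero$ and $(V\odot x)_i\ominus z_i\teq\Zero$. Together with the sign constraints $x_j\teq\Zero$ and the normalisation $\bigoplus_j x_j=0$ (itself encoded as $\bigoplus_j x_j\teq 0$ and $0\teq\bigoplus_j x_j$), this is a finite system of non-strict inequalities with signed coefficients in the variables $(x,z)$. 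I would then eliminate $x_1,\dots,x_n$ one at a time using Theorem~\ref{thm:non+strict+signed+fourier+motzkin}. The surviving system consists of finitely many affine inequalities $c_0\oplus c\odot\pvec{0}{z}\teq\Zero$ in $z$ alone, and each of these is precisely the defining inequality of a closed halfspace $\Hclosed^+((c_0,c))$; their intersection equals $\tconv(V)$.

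\smallskip

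The step I expect to be the main obstacle is that Theorem~\ref{thm:non+strict+signed+fourier+motzkin} requires the coefficient matrix to be \emph{signed}, whereas the sums it forms may produce balanced coefficients after each elimination (as in the worked example following that theorem). For strict inequalities such balanced coefficients are harmlessly removed by the splitting of Lemma~\ref{lem:resolve+balanced} and Theorem~\ref{thm:replace+balanced}; for the relation $\teq$ this is considerably more delicate. Here I would invoke Proposition~\ref{prop:resolving+balanced+coefficiens+teq} to replace, at each stage, a system carrying balanced coefficients by an equivalent signed one, so that the next elimination step applies. This is genuinely the hard part: the proposition only guarantees existence of such a signed system and does not yield a terminating procedure, so the resulting halfspace description is finite but not effectively computed.

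\smallskip

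For the second direction I would use the orthant decomposition behind Theorem~\ref{thm:generators+for+all+orthants}. Writing $M=\bigcap_{a\in H}\Hclosed^+(a)$, I intersect $M$ with the closure $\overline{O}_\sigma$ of each of the $2^d$ orthants. On a fixed orthant all coordinate signs are determined, so each defining condition $a_0\oplus a\odot\pvec{0}{z}\teq\Zero$ collapses to an ordinary one-sided $\max$-plus inequality over $\Tgeq^d$; thus $M\cap\overline{O}_\sigma$ is a classical tropical polyhedron. The classical tropical Minkowski--Weyl theorem then yields a finite generating set $V_\sigma\subseteq M\cap\overline{O}_\sigma$ (in the bounded case these are points, while recession directions toward $\Zero$ are already realised by generators with $\Zero$-coordinates). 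Since all points of $V_\sigma$ lie in a single orthant, after a sign-flip from $\STT$ Lemma~\ref{lem:basic+properties+beq} shows that their signed hull coincides with the classical tropical hull, so $M\cap\overline{O}_\sigma=\tconv(V_\sigma)$. Taking $V=\bigcup_\sigma V_\sigma$, the inclusion $M\subseteq\tconv(V)$ follows because the orthants cover $\TTpm^d$, while $\tconv(V)\subseteq M$ follows from $V\subseteq M$ together with the hypothesis that $M$ is tropically convex, giving $\tconv(V)\subseteq\tconv(M)=M$. This use of the convexity hypothesis is exactly what makes the last inclusion valid and cannot be omitted, in line with the phenomenon recorded in Remark~\ref{rem:closed+tropical+halfspaces+hull}.
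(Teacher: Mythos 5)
Your overall route coincides with the paper's own proof in both directions: the same auxiliary system of non-strict inequalities encoding membership in $\tconv(V)$, elimination of the $x$-variables via Theorem~\ref{thm:non+strict+signed+fourier+motzkin} with balanced coefficients resolved by Proposition~\ref{prop:resolving+balanced+coefficiens+teq}, and, for the converse, an orthant-by-orthant reduction to the tropical Minkowski--Weyl theorem over $\Tgeq$ followed by the convexity hypothesis on $M$ to close the circle. The second direction is complete as you state it.

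In the first direction, however, there is a genuine gap at precisely the point you identify as the main obstacle. Proposition~\ref{prop:resolving+balanced+coefficiens+teq} carries a hypothesis you never verify: the set contained in the balanced halfspace must be \emph{tropically convex}. At each stage of your elimination, that set is the solution set of a system of non-strict inequalities (a partial projection of the lifted set), and such solution sets are \emph{not} automatically tropically convex --- the failure of convexity for intersections of closed halfspaces is exactly the phenomenon recorded in Example~\ref{ex:coordinate+hyperplane} and Remark~\ref{rem:closed+tropical+halfspaces+hull}, and is why the second half of the theorem needs a convexity assumption at all. The paper closes this gap with a dedicated statement, Lemma~\ref{lem:convexity+cone+preimage}: the lifted set $\SetOf{(x,z) \in \TTpm^{n+d}}{A \odot x \beq z,\ x \geq \Zero}$ is tropically convex (a fact that genuinely uses the non-negativity of $x$, via Lemma~\ref{lem:beq+comb}); combined with Proposition~\ref{prop:immediate+properties+convex+sets} (intersections and coordinate projections of tropically convex sets are tropically convex), every intermediate projection is tropically convex, and only then does Proposition~\ref{prop:resolving+balanced+coefficiens+teq} apply. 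A second, smaller omission: that proposition does not by itself yield an ``equivalent'' signed system, only containment of the convex set in some signed halfspace; one must also observe, as the paper does, that resolving a balanced coefficient produces a \emph{tighter} constraint (the signed halfspace is contained in the balanced one), so that the replacement changes neither inclusion between the described set and the projection.
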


\begin{corollary}
  A tropically convex set is the intersection of its containing closed halfspaces. 
\end{corollary}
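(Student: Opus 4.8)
The plan is to deduce the corollary from the open-halfspace description in Theorem~\ref{thm:outer+hull+open+halfspaces} together with the closure relation in Lemma~\ref{lem:relation+open+closed+halfspaces}. Write $C(M)=\bigcap_{M\subseteq\Hclosed^+(a)}\Hclosed^+(a)$ for the intersection of all closed halfspaces containing a tropically convex set $M$. The inclusion $M\subseteq C(M)$ is immediate from the definition of $C(M)$, so the entire content is the reverse inclusion $C(M)\subseteq M$. I would argue contrapositively: given a point $z\notin M$, the goal is to produce a \emph{single} closed halfspace $\Hclosed^+(a)$ with $M\subseteq\Hclosed^+(a)$ but $z\notin\Hclosed^+(a)$, which witnesses $z\notin C(M)$.

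First I would invoke Theorem~\ref{thm:outer+hull+open+halfspaces}: since $M=\tconv(M)$, the point $z\notin M$ is separated from $M$ by an open halfspace, that is, there is a vector $a$ with $M\subseteq\HC^+(a)$ and $z\notin\HC^+(a)$. Setting $s:=a\odot\begin{pmatrix}0\\ z\end{pmatrix}$, the condition $z\notin\HC^+(a)$ means $s\notin\Tgt$, so either $s\in\Tlt$ or $s\in\Tzero$. In the first case there is nothing left to do: by Lemma~\ref{lem:relation+open+closed+halfspaces} we have $M\subseteq\HC^+(a)\subseteq\Hclosed^+(a)$, while $s\in\Tlt$ gives $s\notin\Tgeq\cup\Tzero$, hence $z\notin\Hclosed^+(a)$, and this halfspace does the job.

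The remaining and genuinely delicate case is $s\in\Tzero$, where $z$ lies on the boundary and the value $a\odot\begin{pmatrix}0\\ z\end{pmatrix}$ is balanced (or $\Zero$). Here I would perturb $a$ so as to tip the balance at $z$ strictly to the negative side while keeping all of $M$ inside the corresponding open halfspace. Concretely, among the terms $a_i\odot z_i$ attaining the maximal absolute value $|s|$ at least one is negatively signed; I would slightly increase its magnitude by some $\epsilon>0$ (or, symmetrically, slightly decrease the magnitudes of the positively signed terms attaining $|s|$) to obtain a modified vector $a'$. This forces $a'\odot\begin{pmatrix}0\\ z\end{pmatrix}\in\Tlt$, so that $\Hclosed^+(a')$ excludes $z$ by the previous case.

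The main obstacle is to carry out this perturbation while preserving $M\subseteq\HC^+(a')$, i.e.\ to choose $\epsilon$ \emph{uniformly} over all of $M$: boosting a negative coefficient (or damping a positive one) can destroy the strict positivity of $a\odot\begin{pmatrix}0\\ m\end{pmatrix}$ for points $m\in M$ whose positivity margin is arbitrarily small. For a tropical polytope $M=\tconv(V)$ this difficulty disappears entirely, since by Theorem~\ref{thm:signed+Minkowski+Weyl} one may instead take the finitely many closed halfspaces whose intersection already equals $\tconv(V)$; each contains $M$, so $C(\tconv(V))=\tconv(V)$ follows at once. For a general convex $M$ I would use that any intersection of closed halfspaces is topologically closed --- so the statement inherently concerns closed convex sets --- and exploit this closedness through a compactness argument on the coordinates active at $z$ to extract the uniform $\epsilon$. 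This uniformity step is where the real work lies; the separation itself and the two-case analysis above are routine.
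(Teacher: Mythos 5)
Your proposal really contains two arguments, and only one of them is complete. The complete one is the fallback you mention for polytopes: by the first half of Theorem~\ref{thm:signed+Minkowski+Weyl}, $\tconv(V)$ is the intersection of finitely many closed halfspaces, each of which contains $\tconv(V)$, so writing $C(M)$ for the intersection of all closed halfspaces containing $M$ we get $\tconv(V)\subseteq C(\tconv(V))\subseteq\tconv(V)$. This sandwich is precisely the paper's (implicit) proof of the corollary, whose intended scope --- it is stated as a corollary of the Minkowski--Weyl theorem --- is tropical polytopes $\tconv(V)$ with $V$ finite.

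The main line of your proposal, the separation-plus-perturbation argument for an arbitrary tropically convex set $M$, has a genuine gap, and it is not one that can be repaired. First, Theorem~\ref{thm:outer+hull+open+halfspaces} is stated and proved (via Proposition~\ref{prop:equivalence+hull+solution} and Farkas' lemma) only for finite point configurations $A\in\TTpm^{d\times n}$, so invoking it to separate $z$ from a possibly infinite tropically convex $M$ is not justified. Second, and more fundamentally, the uniform-$\epsilon$ step that you yourself flag as ``the real work'' cannot be carried out in general, because the statement is false for non-closed tropically convex sets: take $M=\HC^+\bigl((\Zero,0,\Zero,\dots,\Zero)\bigr)=\SetOf{x\in\TTpm^d}{x_1>\Zero}$, which is tropically convex by Example~\ref{ex:open+halfspace+convex}. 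By Lemma~\ref{lem:relation+open+closed+halfspaces} every closed halfspace is a topological closure, hence topologically closed, so any closed halfspace containing $M$ also contains the limit $(\Zero,0,\dots,0)$ of the points $(-n,0,\dots,0)\in M$; therefore $C(M)\supsetneq M$. So no compactness argument can produce your uniform $\epsilon$: the corollary simply does not extend beyond the finitely generated (or at least topologically closed) setting. Within that setting your perturbation machinery is unnecessary --- for $M=\tconv(V)$ one can take the minimum margin over the finitely many generators and use convexity of open halfspaces, but the one-line deduction from Theorem~\ref{thm:signed+Minkowski+Weyl} that you already give is the whole proof.
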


\begin{remark}
  The crucial difference with Theorem~\ref{thm:outer+hull+open+halfspaces} is that for open tropical halfspaces the generators are enough, while for closed tropical halfspaces, we have to take the whole set into account. 
\end{remark}

\subsection{Proofs}\label{sec:halfspace+proofs}

To prove that the inner hull and the outer hull coincide, we mainly have to use the separation results from Section~\ref{sec:Farkas+FM}. 

\begin{proof}[Proof of Theorem~\ref{thm:outer+hull+open+halfspaces}]
  The inclusion $\tconv(A) \subseteq \bigcap_{A \subseteq \HC^+(v)} \HC^+(v)$ follows by combining Proposition~\ref{prop:immediate+properties+convex+sets}.\ref{item:intersection+convex+sets} and Example~\ref{ex:open+halfspace+convex}. 

  \smallskip
  
  For the other inclusion, assume that there is a point
  \[
  z \in \bigcap_{A \subseteq \HC^+(v)} \HC^+(v) \setminus \tconv(A) \enspace .
  \]
  By Proposition~\ref{prop:equivalence+hull+solution},
  we get that $\nnker(B) = \emptyset$, where
  \[
  B =
  \begin{pmatrix}
    0 & \ominus 0 \\
    A & \ominus z
  \end{pmatrix} \enspace .
  \]
  Theorem~\ref{thm:farkas+lemma} implies that $\sep(B) \neq \emptyset$.
  Hence, there is a separator $(u_0,\bar{u}) = (u_0,u_1,\ldots,u_d) \in \TTpm^{d+1}$ with $u_0 \oplus \trans{\bar{u}} \odot a^{(j)} > \Zero$ for all columns $a^{(j)}$ of $A$, and $\ominus u_0 \oplus \ominus \trans{\bar{u}} \odot z > \Zero \Leftrightarrow u_0 \oplus \trans{\bar{u}} \odot z < \Zero$. This means that the columns of $A$ lie in the halfspace $\HC^+(u)$ but $z$ does not. This contradicts the choice of $z$ in $\bigcap_{A \subseteq \HC^+(v)}\HC^+(v)$.
\end{proof}

Showing that a closed tropical halfspace is obtained as the closure of an open tropical halfspace follows from a simple limit argument.
We use the notation $[d]_0 = \{0,1,2,\dots,d\}$.

\begin{proof}[Proof of Lemma~\ref{lem:relation+open+closed+halfspaces}]
  Let $z \in \Tgeq^d$ be an element of $\Hclosed^+(a) \setminus \HC^+(a)$.
  Set
  \[
  J = \argmax\SetOf{|c_j \odot z_j|}{j \in [d]_0} \enspace ,
\]
  where $z_0 = 0$, and let $\ell \in J$ with $c_{\ell} \odot z_{\ell} > \Zero$.
  Denoting the $k$th unit vector $(0,\dots,0,1,0,\dots,0) \in \RR^d$ by $e_k$ for $k \in [d]$ and $e_0 = (-1, \dots, -1)$, we define a sequence
  \[
  y^{(m)} = z + \frac{1}{m}\cdot e_{\ell} \enspace .
  \]
  The sequence converges to $z$ but each element of the sequence is an element of $\HC^+(a)$. 
\end{proof}

We will use the Fourier-Motzkin version for the relation `$\teq$' (Theorem~\ref{thm:non+strict+signed+fourier+motzkin}) to deduce an exterior description of a tropical polytope. 
However, the system describing the projection in~\eqref{eq:non+strict+system+projection} may contain balanced coefficients.
We address this issue in the next statement. 
It is an existence argument statement that a balanced coefficient can be replaced by a signed coefficient, see also Figure~\ref{fig:resolving+balanced+coeff}.

\begin{proposition}[Resolving balanced
  coefficients] \label{prop:resolving+balanced+coefficiens+teq}
  Let $M$ be a tropically convex set and $c = (c_0,c_1,\dots,c_d) \in \TSS^d$ with $c_i \in \Tzero$ for some $i \in [d]_0$ such that
  \[
  M \subseteq H(c) = \SetOf{x \in \TTpm^d}{c \odot \begin{pmatrix} 0 \\ x \end{pmatrix} \teq \Zero} \enspace .
  \]
  Then there is an element $b \in \Uncomp(c_i)$ such that $M$ is contained in 
  \[
 \SetOf{x \in \TTpm^d}{(c_0,\dots,c_{i-1},b,c_{i+1},\dots,c_d) \odot \begin{pmatrix} 0 \\ x \end{pmatrix} \teq \Zero} \enspace .
  \]  
\end{proposition}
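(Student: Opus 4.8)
The plan is to turn the membership $M\subseteq H(c)$ into a system of one-sided bounds on the single unknown $b\in\Uncomp(c_i)$ and to solve it with the non-strict elimination of Proposition~\ref{prop:elimination+system+non+strict}. Writing $g(x)=c_0\oplus\bigoplus_{j\neq i}c_j\odot x_j$ for the ``frozen'' part of the functional, the hypothesis reads $g(x)\oplus c_i\odot x_i\teq\Zero$ for all $x\in M$, and the goal is a \emph{signed} $b$ with $|b|\le|c_i|$ such that $g(x)\oplus b\odot x_i\teq\Zero$ for all $x\in M$. If $x_i=\Zero$ then $b\odot x_i=\Zero$ and the inequality holds automatically (it already held with the balanced $c_i$), so only the indices with $x_i\neq\Zero$ matter.

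First I would use the compatibility of $\teq$ with the semiring operations to isolate $b$. Multiplying by the non-negative inverse $x_i^{-1}$ when $x_i>\Zero$ (Lemma~\ref{lem:basic+properties+teq}\eqref{item:teq+affine+monotony}) and moving the free term to the other side (Lemma~\ref{lem:basic+properties+teq}\eqref{item:teq+other+side}) rewrites $g(x)\oplus b\odot x_i\teq\Zero$ as a lower bound $b\teq\alpha_x$ with $\alpha_x=\ominus\big(x_i^{-1}\odot g(x)\big)$; when $x_i<\Zero$ the sign reverses and one obtains an upper bound $\beta_x\teq b$. Thus finding a valid $b$ is exactly finding a signed element of the interval $\Uncomp(c_i)=[\ominus|c_i|,|c_i|]$ lying above every $\alpha_x$ and below every $\beta_{x'}$.

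The heart of the argument is that this sandwich is feasible. For a finite subfamily this is Proposition~\ref{prop:elimination+system+non+strict}, which produces a \emph{signed} element between given lower and upper data precisely when all cross relations hold; here these are $\beta_{x'}\teq\alpha_x$ for $x_i>\Zero>x'_i$, together with the boundary relations $|c_i|\teq\alpha_x$ and $\beta_{x'}\teq\ominus|c_i|$. The boundary relations follow from $x\in H(c)$ alone: since $c_i$ is balanced, $g(x)\oplus c_i\odot x_i\teq\Zero$ forces the signed inequality $g(x)\oplus|c_i|\odot x_i\teq\Zero$ (the balanced-to-signed passage underlying Lemma~\ref{lem:resolve+balanced}), which is exactly $|c_i|\teq\alpha_x$. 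The decisive relation $\beta_{x'}\teq\alpha_x$ is where convexity enters: clearing the positive denominators turns it into $|x'_i|\odot g(x)\oplus|x_i|\odot g(x')\teq\Zero$, and this is the value of the functional on the point of $\tconv(\{x,x'\})$ obtained from the tropical combination that balances the $i$-th coordinate (so that this coordinate may be resolved to $\Zero$). By Lemma~\ref{lem:beq+comb}\eqref{item:u+contain} the functional evaluated at that point lies in $\Uncomp$ of the displayed value, and since $\tconv(\{x,x'\})\subseteq M\subseteq H(c)$ that value must be $\teq\Zero$. Hence all cross relations hold and a signed pairwise bound exists.

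Finally, to pass from pairwise feasibility to a single global $b$ for a possibly infinite $M$, I would set $\underline b=\sup_x\alpha_x$ and $\overline b=\inf_{x'}\beta_{x'}$ inside the (order-complete) interval $\Uncomp(c_i)$; the pairwise relations give $\underline b\le\overline b$ and the boundary relations keep both inside the box, so any signed point of $[\underline b,\overline b]$ is the desired $b$. I expect the main obstacle to be exactly the translation in the third paragraph---identifying the cross relation $\beta_{x'}\teq\alpha_x$ with membership of a balanced combination in $H(c)$ and tracking the signs created by the balanced coefficient $c_i$---while the $\sup$/$\inf$ step is what makes the statement merely existential and explains why no finite procedure is obtained.
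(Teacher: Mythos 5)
Your proposal is correct and follows essentially the same route as the paper's proof: your per-point bounds $\alpha_x,\beta_{x'}$ are the paper's thresholds $\lambda(u,v)$, and your cross-relation argument---combining a point with $x_i>\Zero$ and one with $x'_i<\Zero$ so that coordinate $i$ balances, resolving it to $\Zero$, and evaluating the functional at this point of $M$---is exactly the paper's convexity step, phrased directly rather than as a contradiction between $\overline{\lambda}$ and $\underline{\lambda}$. The concluding sup/inf over the order-complete interval $\Uncomp(c_i)$ matches the paper's choice of $b\in[\overline{\lambda},\underline{\lambda}]$, including the same mild informalities (handling balanced values of the frozen functional and non-attainment for infinite $M$).
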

\begin{proof}
  If $i = 0$, then we can set $b = |c_i|$.

  \smallskip

  Assuming without loss of generality that $i = d$, we set $c_{-d} = (c_0,c_1,\dots,c_{d-1})$.
  Fix $u \in \TTpm^{d-1}, v \in \TTpm$ such that $(u,v) \in M$.
  We define $w = w(u) = c_{-d} \odot \begin{pmatrix} 0 \\ u \end{pmatrix}$.

  
  If $v \in \Tlt$, then 
  \begin{equation} \label{eq:argmax+lambda}
  \lambda(u,v) = \argmax\SetOf{\lambda \in \Uncomp(c_d)}{w \oplus \lambda \odot v \teq \Zero } \enspace .
  \end{equation}
  Let
  \begin{equation} \label{eq:min+max+lambda}
  \underline{\lambda} = \min\SetOf{\lambda(u,v)}{(u,v) \in M, v < \Zero} \enspace .
  \end{equation}
  If $v \in \Tgt$, then 
  \begin{equation} \label{eq:argmin+lambda}
  \lambda(u,v) = \argmin\SetOf{\lambda \in \Uncomp(c_d)}{w \oplus \lambda \odot v \teq \Zero } \enspace .
  \end{equation}
  Let
  \begin{equation} \label{eq:max+min+lambda}
  \overline{\lambda} = \max\SetOf{\lambda(u,v)}{(u,v) \in M, v > \Zero} \enspace .
  \end{equation}
  
  We derive a contradiction to the convexity of $M$, if $\overline{\lambda} > \underline{\lambda}$.
  
  Let $(p,q)$ and $(r,s)$ attain $\underline{\lambda}$ and $\overline{\lambda}$, respectively.
  In particular, we have $q < \Zero$ and $s > \Zero$. 
  The inequality $\overline{\lambda} > \underline{\lambda}$ implies that $\overline{\lambda} > \ominus |c_d|$ and that $\underline{\lambda} < |c_d|$.
  We can assume that $w(p)$ and $w(r)$ are signed numbers, as we otherwise can replace them by their absolute value without changing the admissible values of $\lambda$ in~\eqref{eq:argmax+lambda} and~\eqref{eq:argmin+lambda}. 
  
  Now, the construction of $\underline{\lambda}$ in~\eqref{eq:argmax+lambda} implies that $w(p) = \ominus \underline{\lambda} \odot q$ and~\eqref{eq:argmin+lambda} yields $w(r) = \ominus \overline{\lambda} \odot s$.
  This implies
  \begin{equation} \label{eq:comparison+ratio+lambda}
  w(r) \odot s^{\odot-1} = \ominus \overline{\lambda} < \ominus \underline{\lambda} = w(p) \odot q^{\odot-1} \enspace .
  \end{equation}
  We consider the point in the convex combination of $(p,q)$ and $(r,s)$ given by
  \[
  z = (\ominus q^{\odot-1} \oplus s^{\odot-1})^{\odot-1} \odot (\ominus q^{\odot-1}\odot p \oplus s^{\odot-1}\odot r,\Zero) \enspace .
  \]
  Then
  \begin{equation*}
    \begin{aligned}
      c \odot \begin{pmatrix} 0 \\ z \end{pmatrix} &= c \odot \begin{pmatrix} 0 \\ (\ominus q^{\odot-1} \oplus s^{\odot-1})^{\odot-1} \odot (\ominus q^{\odot-1}\odot p \oplus s^{\odot-1}\odot r) \\ \Zero \end{pmatrix} = \\
      &= (\ominus q^{\odot-1} \oplus s^{\odot-1})^{\odot-1} \odot \begin{pmatrix} \ominus q^{\odot-1}\odot c_{-d} \odot \begin{pmatrix} 0 \\ p \end{pmatrix} \oplus s^{\odot-1}\odot c_{-d} \odot \begin{pmatrix} 0 \\ r \end{pmatrix} \\ \Zero \end{pmatrix} = \\
      &= (\ominus q^{\odot-1} \oplus s^{\odot-1})^{\odot-1} \odot \begin{pmatrix} \ominus q^{\odot-1}\odot w(p) \oplus s^{\odot-1}\odot w(r) \\ \Zero \end{pmatrix}
    \end{aligned}
  \end{equation*}
  By~\eqref{eq:comparison+ratio+lambda}, this implies $ c \odot \begin{pmatrix} 0 \\ z \end{pmatrix} < \Zero$.
  As $M$ is convex, this yields $z \in M \subseteq H(c)$, a contradiction. 

  \smallskip
  
  So, we can conclude that $\overline{\lambda} \leq \underline{\lambda}$. Fix any $b$ in the interval $[\overline{\lambda}, \underline{\lambda}] \neq \emptyset$. 
  Let $(u,v) \in M$ with $v < \Zero$. Then
  \[
  (c_{-d},b) \odot \begin{pmatrix} 0 \\ u \\ v \end{pmatrix} = w(u) \oplus b \odot v \teq w(u) \oplus \underline{\lambda} \odot v \teq w(u) \oplus \lambda(u,v) \odot v \teq \Zero \enspace ,
  \]
  where we use $b \leq \underline{\lambda}$,~\eqref{eq:min+max+lambda} and Lemma~\ref{lem:basic+properties+teq}.\ref{item:teq+affine+monotony}.
  The proof for $v > \Zero$ goes analogously. 
\end{proof}

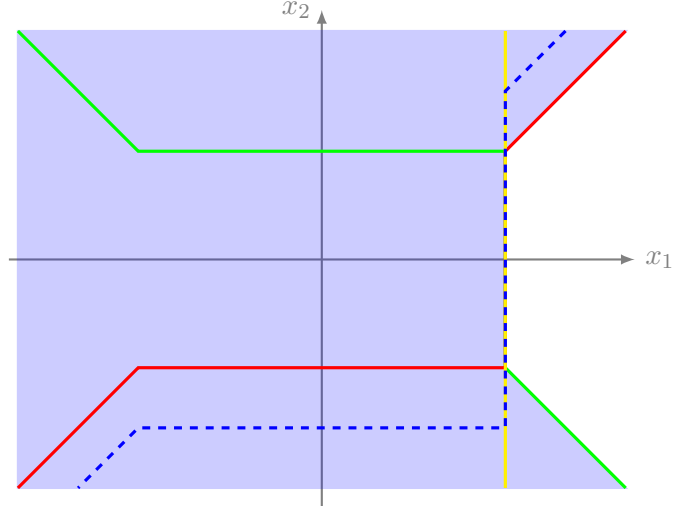
\begin{figure}[ht]
  \begin{tikzpicture}[scale = 0.4]

  \newcommand\addray{4}
  \newcommand\shiftbalanced{2}

  \Koordinatenkreuz{-10.4}{10.4}{-8.2}{8.3}{$x_1$}{$x_2$}

  \coordinate (apex1) at (6.1,3.6);
  \coordinate (apex1neg) at ($-1*(apex1)$);

  \coordinate (apex2) at (apex1 |- apex1neg);
  \coordinate (apex2neg) at ($-1*(apex2)$);

  \coordinate (apex3) at ($(apex2) + (0,-\shiftbalanced)$);
  \coordinate (apex3neg) at ($-1*(apex3)$);

  \draw[fill,Linesegment,opacity=0.2] ($(apex2 |- apex2neg) + (\addray,\addray)$) -- ($(apex1 -| apex1neg) + (-\addray,\addray)$) -- ($(apex2 -| apex2neg) + (-\addray,-\addray)$) -- ($(apex1 |- apex1neg) + (\addray,-\addray)$) -- (apex2) -- (apex1) -- cycle;
  
  \draw[ClosedHalfspace, green] ($(apex1 |- apex1neg) + (\addray,-\addray)$) -- (apex1 |- apex1neg) -- (apex1) -- (apex1 -| apex1neg) -- ($(apex1 -| apex1neg) + (-\addray,\addray)$);

  \draw[ClosedHalfspace, red] ($(apex2 |- apex2neg) + (\addray,\addray)$) -- (apex2 |- apex2neg) -- (apex2) -- (apex2 -| apex2neg) -- ($(apex2 -| apex2neg) + (-\addray,-\addray)$);

  \draw[ClosedHalfspace, yellow] ($(apex2 |- apex2neg) + (0,\addray)$) -- ($(apex1 |- apex1neg) + (0,-\addray)$);

  \draw[ClosedHalfspace, dashed, blue] ($(apex3 |- apex3neg) + (\addray - \shiftbalanced,\addray - \shiftbalanced)$) -- (apex3 |- apex3neg) -- (apex3) -- (apex3 -| apex3neg) -- ($(apex3 -| apex3neg) + (-\addray +\shiftbalanced,-\addray + \shiftbalanced)$);
  
\end{tikzpicture}
  
  \caption{Finding a containing halfspace without balanced coefficients}
  \label{fig:resolving+balanced+coeff}
\end{figure}

\begin{example}
  A pathological example for the last statement arises from resolving the tautological relation $\bullet (-1) \odot x \oplus \bullet (-1) \odot y \oplus \bullet 0 \teq \Zero$. One obtains the chain of relations
  \[
  \begin{aligned}
    \bullet (-1) \odot x \oplus \bullet (-1) \odot y \oplus \bullet 0 &\teq \Zero \Leftrightarrow \\
    \bullet (-1) \odot x \oplus \bullet (-1) \odot y \oplus 0 &\teq \Zero \Leftrightarrow \\
    \bullet (-1) \odot x \oplus \Zero \odot y \oplus 0 &\teq \Zero \Leftrightarrow \\
    \Zero \odot x \oplus \Zero \odot y \oplus 0 &\teq \Zero \enspace .
  \end{aligned}
  \]
\end{example}

\begin{remark}
  Indeed, any value of $b \in \Uncomp(c_d)$ in the former proof could occur as Figure~\ref{fig:resolving+balanced+coeff} shows. Any part of the dashed line without opposite (with respect to the origin) points is a tropical line segment. 
\end{remark}

\begin{example}
  The shaded area in Figure~\ref{fig:resolving+balanced+coeff} shows the feasible region
  \[
  \SetOf{(x,y) \in \TTpm^2}{\ominus (-1) \odot x_1 \oplus (\bullet 2) \odot x_2 \oplus 0 \teq \Zero} \enspace .
  \]
  The red line marks the inequality $\ominus(-1)\odot x_1 \oplus 2 \odot x_2 \oplus 0 \teq \Zero$, while the green line marks the inequality $\ominus(-1)\odot x_1 \oplus (\ominus 2) \odot x_2 \oplus 0 \teq \Zero$.
  The yellow line corresponds to the inequality $\ominus (-1) \odot x_1 \oplus 0 \teq \Zero$.
  The blue dashed line interpolates between these three possible extreme closed halfspaces contained in the feasible region. 
\end{example}

Our proof of Theorem~\ref{thm:signed+Minkowski+Weyl} is based on eliminating variables from the canonical exterior description~\eqref{eq:hull+union+intervals}.
For using those halfspaces, we need to show the additional requirement of tropical convexity for their intersection.

\begin{lemma} \label{lem:convexity+cone+preimage}
  The set
  \begin{equation} \label{eq:convexity+cone}
  \SetOf{(x,z) \in \TTpm^{n+d}}{A \odot x \beq z,\; x \geq \Zero} 
  \end{equation}
  is tropically convex. 
\end{lemma}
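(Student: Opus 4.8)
The plan is to prove convexity via line segments, invoking Proposition~\ref{prop:generation+by+line+segments}: it suffices to check that for any two points $(x^1,z^1),(x^2,z^2)$ lying in the set~\eqref{eq:convexity+cone}, their tropical line segment is again contained in it. The argument will closely parallel the proof of Proposition~\ref{prop:convex+hull+closure}. Before doing this, I would point out the conceptual shortcut that the set~\eqref{eq:convexity+cone} is exactly the conic hull $\tcone{\begin{pmatrix}\Id\\A\end{pmatrix}}$, with $\Id$ the tropical identity matrix: indeed $\begin{pmatrix}\Id\\A\end{pmatrix}\odot\lambda=(\lambda,A\odot\lambda)$, and since $\lambda\in\Tgeq^n$ is signed, $\Uncomp\big(\begin{pmatrix}\Id\\A\end{pmatrix}\odot\lambda\big)=\{\lambda\}\times\Uncomp(A\odot\lambda)$, which by Lemma~\ref{lem:basic+properties+beq}\eqref{item:beq+U} equals the slice $\{\lambda\}\times\SetOf{z}{z\beq A\odot\lambda}$. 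Thus the statement already follows from the fact that conic hulls of subsets of $\TTpm^{n+d}$ are tropically convex; nonetheless I would give the short direct verification, as it is self-contained and robust.

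For the direct argument, fix $(x^1,z^1),(x^2,z^2)$ in the set. A general point of their segment lies in $\Uncomp\big(\alpha\odot(x^1,z^1)\oplus\beta\odot(x^2,z^2)\big)$ for some $\alpha,\beta\in\Tgeq$ with $\alpha\oplus\beta=0$; since one of $\alpha,\beta$ then equals $0$, I may assume after swapping that $\alpha=0$ and set $\lambda=\beta\in\Tgeq$, $\lambda\le 0$. Writing the resulting point as $(x,z)$ with $x\in\Uncomp(x^1\oplus\lambda\odot x^2)$ and $z\in\Uncomp(z^1\oplus\lambda\odot z^2)$ componentwise, the $x$-coordinate is immediate: as $x^1,x^2\in\Tgeq^n$ and $\lambda\in\Tgeq$, the combination $x^1\oplus\lambda\odot x^2$ lies in $\Tgeq^n$, hence is signed, so $\Uncomp$ of it is a singleton and $x=x^1\oplus\lambda\odot x^2\geq\Zero$.

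The remaining task is the balance condition $z\beq A\odot x$, which by Lemma~\ref{lem:basic+properties+beq}\eqref{item:beq+U} and symmetry of $\beq$ is equivalent to $z\in\Uncomp(A\odot x)$. Using distributivity, $A\odot x=(A\odot x^1)\oplus\lambda\odot(A\odot x^2)$. Fixing a coordinate $i$ and applying Lemma~\ref{lem:beq+comb}\eqref{item:u+contain} with $c=\lambda$, $a=z^2_i\in\Uncomp((A\odot x^2)_i)$ and $b=z^1_i\in\Uncomp((A\odot x^1)_i)$ (these memberships come from $(x^1,z^1),(x^2,z^2)$ being in the set) yields $\Uncomp(\lambda\odot z^2_i\oplus z^1_i)\subseteq\Uncomp(\lambda\odot(A\odot x^2)_i\oplus(A\odot x^1)_i)=\Uncomp((A\odot x)_i)$. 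Since $z_i\in\Uncomp(z^1_i\oplus\lambda\odot z^2_i)$, I conclude $z_i\in\Uncomp((A\odot x)_i)$ for every $i$, i.e. $z\beq A\odot x$, so $(x,z)$ lies in the set.

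The one genuine obstacle is that $\beq$ is not transitive (Remark~\ref{rem:not+equivalence}): one cannot simply combine the relation between $A\odot x$ and a combination of the $z^i$ with the relation between $z$ and the same combination to deduce $z\beq A\odot x$. Lemma~\ref{lem:beq+comb}\eqref{item:u+contain} is precisely the device that circumvents this, reasoning with containments of the incomparability sets $\Uncomp(\cdot)$ rather than chaining balances; placing the single scaled coordinate $\lambda$ into the $c\odot a$ slot of that lemma is what keeps the application to one step, exactly as in Proposition~\ref{prop:convex+hull+closure}.
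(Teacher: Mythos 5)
Your proposal is correct. Your direct verification is essentially the paper's own proof: both reduce convexity to the two-point criterion of Proposition~\ref{prop:generation+by+line+segments}, observe that the $x$-part of any combination stays signed because $x^1,x^2,\lambda\geq\Zero$, translate the balance constraint into membership in $\Uncomp(\cdot)$ via Lemma~\ref{lem:basic+properties+beq}\eqref{item:beq+U}, and close with Lemma~\ref{lem:beq+comb}\eqref{item:u+contain} applied with the scalar $\lambda$ in the $c\odot a$ slot; the only organizational difference is that the paper first reduces to a single constraint $a\odot x\beq z$ in $\TTpm^{n+1}$ using Proposition~\ref{prop:immediate+properties+convex+sets}\eqref{item:intersection+convex+sets}, whereas you keep all rows and apply the lemma coordinate by coordinate, which is an equivalent bookkeeping choice. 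Your preliminary observation, however, is a genuinely different and shorter route that the paper does not take: the identification of the set~\eqref{eq:convexity+cone} with the conic hull $\tcone{\begin{pmatrix}\Id\\A\end{pmatrix}}$ is valid (since $\lambda\in\Tgeq^n$ is signed, $\Uncomp\bigl((\lambda,A\odot\lambda)\bigr)=\{\lambda\}\times\Uncomp(A\odot\lambda)$, which is exactly the slice of the set over $x=\lambda$), and it reduces the lemma to the paper's unlabeled corollary, stated right after the definition of the conic hull, that conic hulls are tropically convex. That shortcut buys brevity and a structural insight---the set is not merely convex but is itself a signed tropical cone, namely the graph-like cone generated by the columns of $\begin{pmatrix}\Id\\A\end{pmatrix}$---at the cost of leaning on a corollary whose own justification in the paper is only a one-line appeal to Proposition~\ref{prop:convex+hull+closure}; your self-contained verification avoids that dependence, which is presumably why the paper spells the argument out rather than quoting the corollary.
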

\begin{proof}
  It is enough to show that, for fixed $a \in \TTpm^n$, the set
  \[
  H = \SetOf{(x,z) \in \TTpm^{n+1}}{a \odot x \beq z,\; x \geq \Zero}
  \]
  is tropically convex, then the claim follows from
  Proposition~\ref{prop:immediate+properties+convex+sets}\eqref{item:intersection+convex+sets}.
Let $(p,q), (r,s)\in H$,  and $\lambda\in \Tgeq$, $\lambda\le
0$. We need to show that $\Uncomp(p\oplus\lambda\odot r, q\oplus\lambda \odot
s)\subseteq H$.

Note that since $p,r \geq \Zero$, we have $p\oplus\lambda\odot r\in
\TTpm$ and therefore $\Uncomp(p\oplus\lambda\odot r)=\{p\oplus\lambda\odot r\}$.
By Lemma~\ref{lem:basic+properties+beq}\eqref{item:beq+U}, we have that $q\in \Uncomp(a\odot p)$ and $s\in \Uncomp(a\odot r)$. Using
Lemma~\ref{lem:beq+comb}\eqref{item:u+contain}, we see that 
\[
\Uncomp(q\oplus\lambda \odot s)\subseteq \Uncomp((a\odot
p)\oplus\lambda\odot(a\odot r))=\Uncomp(a\odot (p\oplus\lambda\odot
r)) \enspace,
\]
completing the proof.
\end{proof}

\begin{proof}[Proof of Theorem~\ref{thm:signed+Minkowski+Weyl}]
  Equation~\eqref{eq:hull+union+intervals} provides a description by halfspaces involving additional variables.
  The convex hull of $V$ is the set of those $z \in \TTpm^d$ for which there is an $x \in \TTpm^n$ with
  \begin{equation} \label{eq:equation+system+inner+hull}
    \begin{pmatrix}
      A & \ominus \Id_d & \Zero \\
      \ominus A & \Id_d & \Zero \\
      0 & \Zero_d & \ominus 0 \\
      \ominus 0 & \Zero_d & 0 \\
      \Id_n & \Zero_d & \Zero
    \end{pmatrix}
    \odot
    \begin{pmatrix} x \\ z \\ 0 \end{pmatrix} \teq \Zero \enspace ,
  \end{equation}
  where $\Id$ is a tropical identity matrix with $0$ on the diagonal and $\Zero$ elsewhere.

  By Lemma~\ref{lem:convexity+cone+preimage} and the tropical convexity of $\SetOf{x \in \TTpm^n}{\bigoplus_{j \in [n]} x_j = 0, x \geq \Zero}$, the set of $(x,z) \in \TTpm^{n+d}$ fulfilling~\eqref{eq:equation+system+inner+hull} is the intersection of tropically convex sets and, by Proposition~\ref{prop:immediate+properties+convex+sets}\eqref{item:intersection+convex+sets}, tropically convex as well. 

  Hence, we can use Theorem~\ref{thm:non+strict+signed+fourier+motzkin} to successively project out the $x$-variables.
  As the inequalities arising from a projection may contain balanced coefficients, we use Proposition~\ref{prop:resolving+balanced+coefficiens+teq} to replace them by signed coefficients.
  This yields a description of $\tconv(A)$ by non-strict
  inequalities. Here, we use that resolving a balanced coefficient in
  an inequality leads to a tighter constraint. 

  \smallskip

  If the intersection of closed tropical halfspaces $M$ is tropically convex, then its intersection with any orthant is tropically convex.
  By the tropical Minkowski-Weyl theorem in the non-negative orthant~\cite{GaubertKatz:2011}, each of the parts in the orthants are finitely generated.
  Taking the tropical convex hull of the union of all these generators yields $M$, as $M$ is tropically convex. 
\end{proof}






\section{Connection with tropical convexity in $\TTmax$} \label{sec:generators+in+orthants}

The intersection of a signed tropically convex set with an orthant is just a tropically convex set over $\TTmax$.
This allows to study signed tropical convex sets through the existing theory of unsigned tropically convex sets with the hull~\eqref{eq:positive+convex+hull} in each orthant of $\TTpm^d$.
The proofs of the main statements are deferred to
Section~\ref{sec:orthant+proofs}; these are based on the duality between the non-negative kernels
and open tropical cones.

We fix a (finite) set $M \subseteq \TTpm^d$ and interpret $M$ as a matrix whose columns list the points.
We will augment $M$ by iteratively intersecting line segments between points in $M$ with the coordinate hyperplanes to get a matrix $\zeta(M)$.

To make the construction of $\zeta(M)$ formal, we start with a slight modification of~\eqref{eq:incidence+matrix+elimination}.
We fix $A = (a_{ij}) \in \TTpm^{d \times n}$ and a row index $i \in [d]$.

For two columns $u$ and $v$ of $A$ with $u_i > \Zero$ and $v_i < \Zero$, we set
\begin{equation} \label{eq:convex+cancellation}
  \begin{aligned}
\lambda^+(u,v) &= \ominus v_i^{\odot-1} \odot (u_i^{\odot-1} \oplus \ominus v_i^{\odot-1})^{\odot-1} \text{ and } \\
\lambda^-(u,v) &= u_i^{\odot-1} \odot (u_i^{\odot-1} \oplus \ominus v_i^{\odot-1})^{\odot-1} \enspace .
  \end{aligned}
\end{equation}
These scalars fulfill $\lambda^+(u,v) \oplus \lambda^-(u,v) = 0$ and $\lambda^+(u,v),\lambda^-(u,v) \geq \Zero$.

we define $T^{(i)} = (t_{j,p}) \in \{0,\Zero\}^{n \times ((J^+ \cup J^{\bullet}) \times (J^{\bullet} \cup J^-) \cup J^0)}$, using the decomposition from~\eqref{eq:index+sets+signs} for $A$, as the incidence matrix
  \begin{equation} \label{eq:modified+incidence+matrix+elimination}
  t_{j,p} =
  \begin{cases}
    \lambda^+(a^{(k)},a^{(\ell)}) & j = k  \text{ for } p = (k,\ell) \in (J^+ \cup J^{\bullet}) \times (J^{\bullet} \cup J^-)  \\
    \lambda^-(a^{(k)},a^{(\ell)}) & j = \ell \text{ for } p = (k,\ell) \in (J^+ \cup J^{\bullet}) \times (J^{\bullet} \cup J^-)  \\
    0 & j = p \text{ for } p \in J^0 \\
    \Zero & \text{ else }
  \end{cases}
  \enspace .
  \end{equation}

  \smallskip

    In Theorem~\ref{thm:balanced+fourier+motzkin}, we could also have used the matrix from~\eqref{eq:modified+incidence+matrix+elimination} instead of multiplying with both $S^{(i)}$ and $T^{(i)}$.
  This is technically slightly more complicated but has the advantage that the resulting columns of the product with $A$ are tropical convex combinations of the original columns.
  This motivates the following. 
\begin{definition} \label{def:ith+elimination+matrix} 
  The matrix $\zeta_i(A)$ is the \emph{$i$th elimination matrix} of $A$ and it arises from $\xi(A \odot T)$ by replacing the $i$th row with $\Zero$. 
\end{definition}

This allows to describe generators for the intersection of a tropical convex hull with a coordinate hyperplane $H_i := \SetOf{x \in \TTpm^d}{x_i = \Zero}$ for $i \in [d]$.

\begin{proposition} \label{prop:intersection+coordinate+hyperplane}
  The intersection $\tconv(A) \cap H_i$ is generated by $\zeta_i(A)$.
\end{proposition}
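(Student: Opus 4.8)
The plan is to prove the set identity $\tconv(\zeta_i(A)) = \tconv(A)\cap H_i$ by reducing it to a single equivalence: for every $z\in\TTpm^d$ with $z_i = \Zero$, one has $z\in\tconv(A)$ if and only if $z\in\tconv(\zeta_i(A))$. Granting this, the proposition follows at once. Indeed, the columns of $\zeta_i(A)$ have $i$-th entry $\Zero$ by Definition~\ref{def:ith+elimination+matrix}, hence lie in $H_i$; since $H_i=\SetOf{x\in\TTpm^d}{x_i=\Zero}$ is tropically convex by Example~\ref{ex:coordinate+hyperplane} (take $I=\{i\}$ and $b_i=\Zero$), the hull $\tconv(\zeta_i(A))$ is contained in $H_i$, so that $\tconv(\zeta_i(A)) = \SetOf{z\in\tconv(\zeta_i(A))}{z_i=\Zero} = \SetOf{z\in\tconv(A)}{z_i=\Zero} = \tconv(A)\cap H_i$.

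The equivalence itself I would obtain from the duality between non-negative kernels and open tropical cones. By Proposition~\ref{prop:equivalence+hull+solution}, $z\in\tconv(A)$ is equivalent to $\nnker(B)\neq\emptyset$ for $B = \begin{pmatrix} A & \ominus z\\ 0 & \ominus 0\end{pmatrix}$, and $z\in\tconv(\zeta_i(A))$ is equivalent to $\nnker(B')\neq\emptyset$ for $B'=\begin{pmatrix}\zeta_i(A) & \ominus z\\ 0 & \ominus 0\end{pmatrix}$. Applying Farkas' lemma (Theorem~\ref{thm:farkas+lemma}) to both matrices, it suffices to show $\sep(B)=\emptyset \Leftrightarrow \sep(B')=\emptyset$; in fact I would prove the stronger statement that $\sep(B')$ is exactly the $i$-th coordinate projection $\rho_i(\sep(B))$, which certainly preserves emptiness.

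To identify these two cones I would invoke the balanced Fourier--Motzkin elimination of Theorem~\ref{thm:balanced+fourier+motzkin}, eliminating the coordinate dual to row $i$ of $B$. The crucial point is that $z_i = \Zero$ forces the entry $\ominus z_i = \Zero$ in row $i$ of the last column of $B$, so that this column lies in the index set $J^0$ of~\eqref{eq:index+sets+signs} and is passed through unchanged by the incidence matrix $T^{(i)}$, while the columns coming from $A$ are paired according to the sign of their $i$-th entry. Thus $B_{-i}\odot S^{(i)}\odot T^{(i)}$ is, column by column, the matrix $\begin{pmatrix}(A\odot T^{(i)})_{-i} & \ominus z_{-i}\\ 0 & \ominus 0\end{pmatrix}$, and Theorem~\ref{thm:balanced+fourier+motzkin} gives $\rho_i(\sep(B)) = \sep\bigl(B_{-i}\odot S^{(i)}\odot T^{(i)}\bigr)$.

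The main obstacle is to match this eliminated matrix with $B'$, and this is where the definition $\zeta_i(A)=\xi(A\odot T^{(i)})$ (with row $i$ set to $\Zero$) enters. Three bookkeeping facts close the gap, all of which leave $\sep$ invariant: the all-$\Zero$ row $i$ of $B'$ is irrelevant to the condition $\trans{y}\odot B'>\Zero$, so it may be deleted; the passage from the $\{0,\Zero\}$-incidence matrix of~\eqref{eq:incidence+matrix+elimination} to the convex-combination weights $\lambda^\pm$ of~\eqref{eq:modified+incidence+matrix+elimination} is a scaling of columns by non-negative numbers, which does not change $\sep$; and resolving the balanced coefficients through $\xi$ preserves $\sep$ by Theorem~\ref{thm:replace+balanced}. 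Combining these, $\sep(B') = \sep\bigl(B_{-i}\odot S^{(i)}\odot T^{(i)}\bigr) = \rho_i(\sep(B))$, yielding the desired equivalence. The most delicate part is checking that the pairing performed by $T^{(i)}$ on the columns of $B$ genuinely reproduces, after resolving balances via $\xi$, the columns of $\zeta_i(A)$ together with the untouched column $\ominus z$; once $z_i=\Zero$ is used to keep $\ominus z$ in $J^0$, this becomes a direct comparison of the two constructions.
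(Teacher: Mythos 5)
Your proof is correct and follows essentially the same route as the paper's: reduce membership to non-emptiness of a non-negative kernel (Proposition~\ref{prop:equivalence+hull+solution}), dualize via Farkas' lemma (Theorem~\ref{thm:farkas+lemma}), and identify the eliminated system with the system augmented by $\zeta_i(A)$ via Fourier--Motzkin (Theorem~\ref{thm:balanced+fourier+motzkin}, Corollary~\ref{cor:signed+fourier+motzkin}), using exactly the paper's two key observations — that $z_i=\Zero$ keeps the column $\ominus z$ in $J^0$, and that the weights $\lambda^{\pm}$ amount to a non-negative column rescaling that fixes the normalization row. The only difference is organizational: the paper establishes $\tconv(\zeta_i(A))\subseteq\tconv(A)\cap H_i$ directly from convexity and invokes the duality only for the reverse inclusion (by contradiction), whereas you run the duality in both directions at once.
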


Additionally, we get generators for the intersection with an orthant. 

\begin{proposition} \label{prop:intersection+orthant}
  The intersection $\tconv(A) \cap \Tgeq^d$ is generated by
  \[
  \left(A \cup \bigcup_{i \in [d]} (\tconv(A) \cap H_i) \right) \cap \Tgeq^d \enspace .
  \]
\end{proposition}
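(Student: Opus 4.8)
I will write the generating set as $G=\bigl(A\cup\bigcup_{i\in[d]}(\tconv(A)\cap H_i)\bigr)\cap\Tgeq^d$ and prove $\tconv(A)\cap\Tgeq^d=\tconv(G)$; throughout I use that on $\Tgeq^d$ the signed combinations of Definition~\ref{def:inner+hull} agree with the ordinary hull~\eqref{eq:positive+convex+hull}, because a convex combination of non-negative vectors never produces a balanced entry. For the inclusion $\tconv(G)\subseteq\tconv(A)\cap\Tgeq^d$ I first note that $\Tgeq^d$ is tropically convex (again no balancing occurs among non-negative vectors) and that every generator in $G$ lies in $\tconv(A)\cap\Tgeq^d$ by construction. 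Since $\tconv(A)$ is tropically convex by Proposition~\ref{prop:convex+hull+closure}, Proposition~\ref{prop:immediate+properties+convex+sets}\eqref{item:intersection+convex+sets} shows $\tconv(A)\cap\Tgeq^d$ is tropically convex, and as it contains $G$ it contains $\tconv(G)$.

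The reverse inclusion is the substantial direction. Fix $z\in\tconv(A)\cap\Tgeq^d$, so $z\in\Uncomp(p)$ with $p=A\odot x$, $x\in\Tgeq^n$, $\bigoplus_j x_j=0$ by~\eqref{eq:hull+U}. Let $B=\{i:p_i\in\Tzero\setminus\{\Zero\}\}$ be the balanced coordinates. Since $z\ge\Zero$, in each coordinate either $i\notin B$ and $z_i=p_i\in\Tgeq$, or $i\in B$ and $\Zero\le z_i\le|p_i|$. If some $z_i=\Zero$ then $z\in H_i$, hence $z\in\tconv(A)\cap H_i\cap\Tgeq^d\subseteq G$; so it suffices to treat points in the interior $(\Tgt)^d$. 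For every $i\in B$ the point $g^{(i)}$ obtained from $z$ by replacing its $i$-th coordinate by $\Zero$ lies in $G$: it is non-negative, lies in $H_i$, and lies in $\Uncomp(p)$ because $\Zero\in[\ominus|p_i|,|p_i|]$ while the remaining coordinates are unchanged (using Lemma~\ref{lem:basic+properties+beq}\eqref{item:beq+U}). A direct computation then shows that whenever $|B|\ge 2$ one has $z=\bigoplus_{i\in B}0\odot g^{(i)}$, so $z\in\tconv(G)$.

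The remaining, genuinely hard cases are $|B|\in\{0,1\}$, where the facet points $g^{(i)}$ do not span the \emph{positive backbone} of $z$, namely the non-balanced coordinates where $z_i=|p_i|$ is forced. Concretely, the obstacle is to certify that the point $|p|=|A|\odot x\in\Tgeq^d$ (and, in the single-balanced-coordinate case, points of $\tconv(A)$ carrying a prescribed large value in that coordinate and $\Zero$ elsewhere) belongs to $\tconv(G)$; these are exactly the generators living on the facets $\tconv(A)\cap H_j$ for $j\notin B$, whose presence is invisible from the single combination $x$.

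I would resolve this through the duality of Section~\ref{subsec:dual+notions}. Since $\tconv(G)=\bigcap_{G\subseteq\HC^+(v)}\HC^+(v)$ by Theorem~\ref{thm:outer+hull+open+halfspaces}, it is enough to show that no open halfspace $\HC^+(v)$ separates $z$ from $G$; testing such a $v$ against the facet generators $g^{(i)}$ and against the facet sets $\tconv(A)\cap H_j$ furnished by Proposition~\ref{prop:intersection+coordinate+hyperplane}, together with Farkas' lemma (Theorem~\ref{thm:farkas+lemma}), should force $v_0\oplus\trans{\bar v}\odot z>\Zero$. An alternative route is induction on the number of columns of $A$ lying outside $\Tgeq^d$: peeling off one such column $q$, Proposition~\ref{prop:generation+by+line+segments} places $z$ on a tropical segment between $q$ and a point of $\tconv(A\setminus\{q\})$, and the point where this segment crosses $\partial\Tgeq^d$ is a facet generator in some $\tconv(A)\cap H_i\subseteq G$. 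I expect the careful control of this crossing—in particular the balanced coordinates created along the way—to be the main difficulty, mirroring the balanced-coefficient resolution needed in Proposition~\ref{prop:resolving+balanced+coefficiens+teq}.
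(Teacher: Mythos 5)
There is a genuine gap. The parts you completed are correct: the inclusion $\tconv(G)\subseteq\tconv(A)\cap\Tgeq^d$, the case where some $z_i=\Zero$, and the identity $z=\bigoplus_{i\in B}0\odot g^{(i)}$ when $|B|\ge 2$ all check out. But the cases $|B|\in\{0,1\}$, which you leave as a sketch (``I would resolve'', ``should force'', ``I expect''), are not edge cases --- they are the heart of the proposition, and for them your completed construction produces no generators at all. Concretely, take $A$ with columns $(3,\ominus 1)$ and $(\ominus 1,3)$: then $z=(3,3)=A\odot(0,0)$ exactly, so $B=\emptyset$, yet neither column lies in $\Tgeq^2$ and $A\cap\Tgeq^2=\emptyset$. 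The generators one must exhibit here, e.g.\ $(\Zero,3)\in\tconv(A)\cap H_1$ and $(3,\Zero)\in\tconv(A)\cap H_2$ with $z=(\Zero,3)\oplus(3,\Zero)$, arise from \emph{different} combinations than the given witness $x$, with different balanced patterns; nothing in your argument certifies that such facet points exist or that they suffice to recover $z$. Note also that $B$ depends on the chosen witness $x$, not on $z$ alone, so there is no hope of reducing to the case $|B|\ge 2$.

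The part you defer is exactly where the paper has to work. For each column $a^{(j)}\notin\Tgeq^d$ the paper parametrizes the segment $L_\eta(z,a^{(j)})$ and takes the minimal $\eta$ at which some coordinate becomes balanced; Intermediate Claim I shows that at this $\eta$ every coordinate is balanced or in $\Tgeq$, so replacing balanced entries by $\Zero$ (legitimate by Proposition~\ref{prop:generation+by+line+segments}) yields a point $b^{(j)}\in\tconv(A)\cap\Tgeq^d$ lying in some $H_i$; Intermediate Claim II then shows, via a non-negative-kernel computation in which the columns $a^{(j)}$ are iteratively replaced by the $b^{(j)}$ and a case analysis on where the maximal absolute values sit, that $z\in\tconv(\{b^{(j)}\}_j)$. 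This kernel argument is precisely the ``careful control of the crossing'' you flag as the main difficulty; your second sketched route points toward it but does not execute it. Your first sketched route (outer hull plus Farkas) is also not obviously workable: to rule out a separating open halfspace you would need that every open halfspace containing $G$ contains $z$, which is essentially the statement to be proved.
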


Now, we have to combine all possible ways of descending in a chain of intersections of coordinate hyperplanes. 
Each permutation $\sigma$ in the symmetric group on $d$ elements $S_d$ gives rise to a sequence of matrices $\zeta_{\sigma(1)}(M)$, $\zeta_{\sigma(2)}(\zeta_{\sigma(1)}(M))$ until $\zeta_{\sigma(d)}(\dots(\zeta_{\sigma(2)}(\zeta_{\sigma(1)}(M))\dots)$.
We denote the concatenation of these $d$ matrices by $\zeta_{\sigma}(M)$.
The concatenation of the matrices for all $\sigma \in S_d$ forms the matrix $\zeta(M)$.

\begin{theorem} \label{thm:generators+for+all+orthants}
  The convex hull $\tconv(M)$ of $M$ is the union
  \begin{equation}
    \bigcup_{O\text{ closed orthant of } \TTpm^{d}} \tconv(O \cap \zeta(M)) \enspace .
  \end{equation}
\end{theorem}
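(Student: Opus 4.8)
The plan is to prove the equality orthant by orthant. Since the $2^d$ closed orthants cover $\TTpm^d$, we have $\tconv(M)=\bigcup_O(\tconv(M)\cap O)$, so it suffices to establish for each closed orthant $O$ that $\tconv(M)\cap O=\tconv(O\cap\zeta(M))$. On the right-hand side the hull may be read as the ordinary unsigned hull inside $O$: all columns of $O\cap\zeta(M)$ share one closed orthant, so their combinations produce no balanced entries and the signed and unsigned hulls coincide (cf.\ the remark after Definition~\ref{def:inner+hull}). Using a sign-flip from the group $\STT$ (Remark~\ref{rem:transformation+group}), which permutes the orthants and is equivariant with respect to the elimination construction — flipping the sign of a coordinate merely swaps the roles of $J^+$ and $J^-$ there and negates the resulting intersection points, so $\zeta(D\odot M)=D\odot\zeta(M)$ up to reordering columns for a diagonal sign matrix $D$ — I may assume $O=\Tgeq^d$.

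The inclusion $\supseteq$ is the easy direction. Every column of $\zeta(M)$ lies in $\tconv(M)$: the columns of $\zeta_i(M)$ generate $\tconv(M)\cap H_i\subseteq\tconv(M)$ by Proposition~\ref{prop:intersection+coordinate+hyperplane}, iterating the elimination keeps columns inside $\tconv(M)$, and the augmented original columns of $M$ are trivially in $\tconv(M)$. Hence $\zeta(M)\cap\Tgeq^d\subseteq\tconv(M)\cap\Tgeq^d$. The orthant $\Tgeq^d$ is tropically convex (combinations of points sharing an orthant produce no balanced entries, so every line segment stays in the orthant), so by Proposition~\ref{prop:immediate+properties+convex+sets}\eqref{item:intersection+convex+sets} and Proposition~\ref{prop:convex+hull+closure} the set $\tconv(M)\cap\Tgeq^d$ is tropically convex, and the hull of a subset stays inside it.

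For the hard inclusion $\subseteq$ I would induct on $d$. The case $d=1$ is direct: the only coordinate hyperplane is $\{\Zero\}$, and $\zeta(M)$ adds to $M$ exactly the origin whenever $M$ contains points of both signs, which is all that is needed to generate $\tconv(M)\cap\Tgeq$. For the inductive step, I apply Proposition~\ref{prop:intersection+orthant} to write
\[
\tconv(M)\cap\Tgeq^d=\tconv\Bigl(\bigl(M\cup\textstyle\bigcup_{i\in[d]}(\tconv(M)\cap H_i)\bigr)\cap\Tgeq^d\Bigr),
\]
so it suffices to place the generating set on the right inside $\tconv(\zeta(M)\cap\Tgeq^d)$. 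The part $M\cap\Tgeq^d$ is contained in $\zeta(M)\cap\Tgeq^d$ since $\zeta(M)$ augments $M$. For each $i$, Proposition~\ref{prop:intersection+coordinate+hyperplane} gives $\tconv(M)\cap H_i=\tconv(\zeta_i(M))$, a signed hull living on $H_i\cong\TTpm^{d-1}$; intersecting with $\Tgeq^d\cap H_i\cong\Tgeq^{d-1}$ and invoking the induction hypothesis in dimension $d-1$ yields $\tconv(M)\cap H_i\cap\Tgeq^d=\tconv(\zeta(\zeta_i(M))\cap\Tgeq^d)$, where $\zeta(\zeta_i(M))$ denotes the $(d-1)$-dimensional construction on the coordinates $[d]\setminus\{i\}$.

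The crucial combinatorial point — and the step I expect to be the main obstacle — is matching this recursion with the definition of $\zeta(M)$. Because the $i$th row of $\zeta_i(M)$ is identically $\Zero$, applying the $d$-dimensional elimination $\zeta_j$ with $j\neq i$ agrees with the $(d-1)$-dimensional elimination on the remaining coordinates; hence for a permutation $\sigma\in S_d$ with $\sigma(1)=i$, the chain $\zeta_{\sigma(1)}(M),\zeta_{\sigma(2)}(\zeta_{\sigma(1)}(M)),\dots$ reproduces exactly the $(d-1)$-dimensional $\zeta$-construction applied to $\zeta_i(M)$; ranging $\sigma$ over all permutations with $\sigma(1)=i$ recovers all $(d-1)!$ orderings of $[d]\setminus\{i\}$. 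Consequently $\zeta(\zeta_i(M))\cap\Tgeq^d\subseteq\zeta(M)\cap\Tgeq^d$, giving $\tconv(M)\cap H_i\cap\Tgeq^d\subseteq\tconv(\zeta(M)\cap\Tgeq^d)$. Combining the pieces, the entire generating set from Proposition~\ref{prop:intersection+orthant} lies in the tropically convex set $\tconv(\zeta(M)\cap\Tgeq^d)$, so its hull does too; this closes the induction, and taking the union over all orthants yields the theorem.
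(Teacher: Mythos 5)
Your proof is correct and takes essentially the same route as the paper's own (very terse, two-sentence) proof: decompose $\tconv(M)\cap\Tgeq^d$ via Proposition~\ref{prop:intersection+orthant} into interior generators plus the pieces on the coordinate hyperplanes, then handle those pieces by iterating Proposition~\ref{prop:intersection+coordinate+hyperplane}. Your explicit induction on $d$, the matching of $\zeta(\zeta_i(M))$ with the chains of $\zeta(M)$ starting at $\sigma(1)=i$, and the sign-flip equivariance reducing to one orthant are precisely the details the paper compresses into the word ``iteratively''.
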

\begin{proof}
  By Proposition~\ref{prop:intersection+orthant}, we know that $\tconv(M)$ is generated by the projections on the boundary of the orthants and the generators in the interior.
  Iteratively applying Proposition~\ref{prop:intersection+coordinate+hyperplane} to the intersection of coordinate hyperplanes yields the claim.
\end{proof}

\begin{example}
  Looking at the points from Example~\ref{ex:first+convex+hull+critical+points}, we see how we can determine the tropical convex hull of $\{(3,3),(\ominus 1, \ominus 0),(\ominus 4, \ominus 2)\}$.
  It is the union of the tropical convex hulls
  \begin{align*}
    &\tconv\left(\{(3,3),(\Zero,3),(\Zero,1)\}\right), \quad \tconv\left(\{(\ominus 1, 0),(\Zero, 1),(\Zero, 3),(\ominus 4,\Zero)\}\right) \\
    &\tconv\left(\{(\ominus 1, \ominus 0),(\ominus 4, \ominus 2),(\ominus 1,\Zero),(\ominus 4,\Zero)\}\right) \enspace .
  \end{align*}
  
  \smallskip

  On the other hand, to get the tropical convex hull $\tconv((0,0),(\ominus -2, \ominus -2))$ in Figure~\ref{subfig:tropical+exploded+line+segment}, one needs actual multi-valued cancellation.
  
  We get
  \[
  \zeta_1\left(\pvec{0}{0},\pvec{\ominus -2}{\ominus -2}\right) = \left\{\pvec{\Zero}{-2},\pvec{\Zero}{\ominus -2}\right\}\text{ and }
  \zeta_2\left(\pvec{0}{0},\pvec{\ominus -2}{\ominus -2}\right) = \left\{\pvec{-2}{\Zero},\pvec{\ominus -2}{\Zero}\right\}.
  \]
  From applying $\zeta_1(\zeta_2(.))$, we additionally obtain $\{(\Zero,\Zero)\}$. 
  
 For the positive orthant, this yields the generators $\{(\Zero,\Zero),(\Zero,-2),(-2,\Zero),(0,0)\}$.
  The other orthants are derived analogously. 
\end{example}

\begin{corollary}
  Tropically convex sets are contractible.
\end{corollary}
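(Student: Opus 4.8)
The plan is to exhibit an explicit strong deformation retraction of an arbitrary tropically convex $M\subseteq\TTpm^d$ onto a single point $p\in M$, built by sliding each point along the tropical segment joining it to $p$. By Proposition~\ref{prop:generation+by+line+segments} every such segment $\tconv(\{x,p\})$ lies in $M$, so any path that stays inside these segments stays inside $M$; the only real work is to choose, continuously in $(x,t)$, one point of the (possibly higher-dimensional) segment. Concretely I would trace the segment in two phases using the combinations from Definition~\ref{def:inner+hull}: first $0\odot x\oplus\mu\odot p$ with $\mu$ running from $\Zero$ to $0$, then $\lambda\odot x\oplus 0\odot p$ with $\lambda$ running from $0$ to $\Zero$. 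At the endpoints this gives $x$ and $p$ respectively, and $H(p,t)=p$ for all $t$ because $\lambda\odot p\oplus\mu\odot p=p$ whenever $\lambda\oplus\mu=0$; thus $p$ stays fixed throughout.

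Whenever the combination $\lambda\odot x\oplus\mu\odot p$ is signed I would take that value; this is automatically single-valued and continuous. The subtlety is that in a coordinate $i$ where $x_i$ and $p_i$ have opposite sign, the combination becomes balanced at the single instant $|\lambda\odot x_i|=|\mu\odot p_i|$, and there the segment locally fills the whole order interval $\Uncomp(\bullet|x_i|)=[\ominus|x_i|,|x_i|]$, which by Lemma~\ref{lem:basic+properties+beq} is exactly the balanced part of $\tconv(\{x,p\})$. Since this interval is a topological arc through $\Zero$ joining the incoming value to the outgoing value of opposite sign, I would pause the weight at that instant and slide coordinate $i$ continuously across the interval, keeping all other coordinates at their current signed values; the resulting point still balances the current combination, hence remains in $M$. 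Coordinates that balance at the same instant are slid together.

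To organize this into a map continuous jointly in $x$ and $t$ I would pass to the topological model $\TTpm\cong\RR$ given by $a\mapsto\exp(a)$ on $\Tgeq$ and $a\mapsto-\exp(|a|)$ on $\Tleq$ (with $\Zero\mapsto0$). In these coordinates, tropical scaling by an element of $\Tgeq$ is ordinary multiplication by a nonnegative scalar, $\oplus$ becomes ``retain the coordinate of larger modulus, blowing up an antipodal tie $\pm r$ to the segment $[-r,r]$'', and $\tconv(\{x,p\})$ is a union of axis-parallel boxes. The contraction then moves each coordinate monotonically toward its target, inserting at each antipodal crossing a straight traversal through $0$ of that coordinate. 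The crossing data (which coordinates cross, and at which weight) depend continuously on $x$, and a vanishing crossing interval---which occurs exactly when some $x_i$ approaches $\Zero$---shrinks to a point, so the inserted traversals degenerate continuously; this is what lets a single reparametrization of time serve all $x$ at once.

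The main obstacle is precisely this last point: producing one reparametrization making $H$ jointly continuous, since the number and location of the balanced crossings vary with $x$ and several may coincide or disappear. Once $H$ is verified continuous it is a strong deformation retraction of $M$ onto $\{p\}$, so $M$ is contractible. As a consistency check one can also reduce along Theorem~\ref{thm:generators+for+all+orthants}: the intersection of $M$ with each closed orthant is an ordinary tropically convex subset of a copy of $\Tgeq^d$, hence contractible, and the homotopy above restricts compatibly to these pieces and their common coordinate faces.
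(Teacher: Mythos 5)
Your route is genuinely different from the paper's, but it has a gap that you yourself flag and never close. The paper does not build an explicit homotopy at all: it invokes Theorem~\ref{thm:generators+for+all+orthants} to split a tropically convex set into its intersections with the $2^d$ closed orthants, each of which is an ordinary tropically convex set over $\TTmax$ and hence contractible by \cite[Theorem 2]{DevelinSturmfels:2004}, and then contracts to the boundary of the orthants and inducts on the dimension $d$. You instead try to contract all of $M$ at once along the signed segments $\tconv(\{x,p\})$. In that approach the joint continuity of $H$ in $(x,t)$ is not a routine verification to be deferred --- it \emph{is} the proof. The naive map $(x,t)\mapsto \lambda(t)\odot x\oplus\mu(t)\odot p$ is not even well defined at a sign crossing (the coordinate is balanced there), and your fix of pausing time and sliding through $\Uncomp(\bullet|x_i|)$ inserts pauses whose instants (tropically $|x_i|\odot|p_i|^{\odot -1}$) and durations (the Euclidean length of $[\ominus|x_i|,|x_i|]$ in the real model) both depend on $x$; crossings appear, merge and vanish as $x$ moves, and they can occur in either phase (in the second phase when $|x_i|>|p_i|$ with opposite signs). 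Producing one reparametrization that serves all $x$ --- say, an arc-length-type normalization together with a proof that it varies continuously with $x$, including the degenerations $x_i\to\Zero$ --- is exactly what is missing. As written, you have a family of paths, not a homotopy.

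Your closing ``consistency check'' is in fact the skeleton of the paper's actual proof, but it is also incomplete as stated: contractibility of $M\cap O$ for every closed orthant $O$ does not by itself give contractibility of the union $M$ (a union of contractible sets glued along contractible pieces need not be contractible). The paper supplies the missing glue by contracting each orthant piece to the orthant boundary and then inducting on $d$. The cheapest repair of your write-up is to make that reduction the proof: inside a single orthant no balanced crossings occur, so the two-phase combination $(x,t)\mapsto x\oplus(t\odot p)$ followed by $(x,t)\mapsto(t\odot x)\oplus p$ is single-valued and manifestly continuous there, and the orthant-boundary induction replaces the global reparametrization you could not construct.
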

\begin{proof}
  The space $\TTpm^d$ inherits the topology of $\RR^d$ via the map $\slog \colon x \mapsto \sgn(x) \log(|x|)$, where the origin is mapped to the all-$\Zero$-point.
  As tropically convex sets in all orthants are contractible~\cite[Theorem 2]{DevelinSturmfels:2004}, we can contract to the boundary of the orthants. The claim follows by induction on the dimension $d$. 
\end{proof}

For the definition of the covector decomposition in $\left(\RR \cup \{-\infty\}\right)^d$ and its connection with regular subdivisions of $\Delta_d \times \Delta_n$ we refer the reader to~\cite{JoswigLoho:2016}.

\begin{corollary}[Covector decomposition]
  The combinatorics of the tropically convex hull of a matrix $A \in \TTpm^{d \times n}$ can be described by $2^d$ regular subdivisions of $\Delta_d \times \Delta_n$. 
\end{corollary}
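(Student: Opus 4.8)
The plan is to transport the problem, orthant by orthant, to the classical correspondence between unsigned tropical polytopes and regular subdivisions of a product of simplices recalled in~\cite{JoswigLoho:2016}, using the orthant decomposition of Theorem~\ref{thm:generators+for+all+orthants} as the bridge. First I would fix, as in the contractibility corollary above, the homeomorphism $\slog\colon\TTpm^d\to\RR^d$ (sending the origin $\Zero$ to $0$), under which each of the $2^d$ closed orthants $O$ of $\TTpm^d$ is carried to a closed orthant of $\RR^d$. Composing with the diagonal sign-flip $D_\epsilon\in\STT$ attached to the sign pattern $\epsilon$ of $O$ (Remark~\ref{rem:transformation+group}), which leaves the combinatorial structure unchanged, identifies $O$ with the non-negative part $\Tgeq^d=\TTmax^d$. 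Thus it suffices to describe $\tconv(M)\cap O$ in each orthant separately and then glue.

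By Theorem~\ref{thm:generators+for+all+orthants}, the piece $\tconv(M)\cap O$ equals the unsigned tropical convex hull $\tconv(O\cap\zeta(M))$. Transported to $\Tgeq^d$ by the identification above, this is an ordinary $\TTmax$-tropical polytope, i.e.\ the tropical convex hull of the columns of a finite matrix $V_O\in\Tgeq^{d\times m_O}$ (some entries being $\Zero=-\infty$, contributed by the coordinate-hyperplane generators). The dictionary of~\cite{JoswigLoho:2016} then assigns to $V_O$, read as a height function on the vertices of $\Delta_d\times\Delta_{m_O}$, the regular subdivision which is the covector decomposition of $\tconv(V_O)$; the presence of $-\infty$ heights is handled there. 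Ranging over the $2^d$ orthants produces $2^d$ such subdivisions, and these encode all of $\tconv(M)$ because the pieces are glued along the coordinate hyperplanes $H_i$, consistently, since the generators lying on each $H_i$ were produced by the common elimination matrices $\zeta_i$ of Definition~\ref{def:ith+elimination+matrix}.

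The main obstacle is matching the per-orthant generating set to the advertised factor $\Delta_n$. The augmentation $\zeta(M)$ typically yields more than the original $n$ columns inside a given orthant, so a priori the second factor is $\Delta_{m_O}$ with $m_O=|O\cap\zeta(M)|$ rather than $\Delta_n$; this genuinely reflects the balanced-number phenomenon, since a column of $M$ with the opposite sign in coordinate $i$ does not simply drop out but contributes, through the $\Uncomp$-resolution of a balanced entry, the whole interval up to its absolute value. The delicate step is therefore to encode these opposite-sign contributions as $-\infty$ entries (and the boundary vertices on $\bigcup_i H_i$) of a single height matrix on $\Delta_d\times\Delta_n$, rather than as genuinely new columns, and to check that the extra $\zeta$-generators refine the decomposition only along the coordinate hyperplanes and introduce no new covector type in the interior of $O$. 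Once the generating set of each orthant is normalized in this way, the corollary follows formally from the unsigned covector-to-regular-subdivision correspondence.
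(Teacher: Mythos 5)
Your first two paragraphs reconstruct exactly the route the paper intends: the corollary is stated without proof, immediately after Theorem~\ref{thm:generators+for+all+orthants}, and is meant to follow by identifying each closed orthant with $\Tgeq^d$ via a sign-flip from Remark~\ref{rem:transformation+group} and then invoking the covector/regular-subdivision dictionary of~\cite{JoswigLoho:2016} for the unsigned hull $\tconv(O\cap\zeta(M))$ in each orthant. So in terms of approach you and the paper agree.

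However, the issue you raise in your last paragraph and then leave open is a genuine gap, and the fix you sketch does not close it. Theorem~\ref{thm:generators+for+all+orthants} produces in the orthant $O$ the unsigned hull of the generators $O\cap\zeta(A)$, and their number $m_O$ exceeds $n$ in general: in the paper's own example after that theorem ($n=3$ points in the plane), the mixed orthant and the negative orthant each need four generators. So the argument as executed yields $2^d$ regular subdivisions of $\Delta_d\times\Delta_{m_O}$ with varying $m_O$, not of $\Delta_d\times\Delta_n$. Your proposed normalization, encoding the opposite-sign contributions as $-\infty$ heights in a $d\times n$ height matrix, fails in its natural reading: for $A=\{(3,3),(\ominus 1,\ominus 0),(\ominus 4,\ominus 2)\}$ and the positive orthant, replacing the two all-negative columns by $(\Zero,\Zero)$ gives a configuration whose unsigned hull is the one-dimensional diagonal ray through $(3,3)$, whereas the true piece $\tconv(A)\cap\Tgeq^2=\tconv(\{(3,3),(\Zero,3),(\Zero,1)\})$ is two-dimensional, so no subdivision induced by those heights can describe its cell structure. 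What is actually needed is an argument that the extra generators created by the elimination matrices $\zeta_i$ (the crossing points with the hyperplanes $H_i$, whose coordinates are tropical combinations of the absolute values $|a_{ij}|$) induce a cell decomposition that is already determined by a height function on the original $d\times n$ vertex set, for instance by $|A|$ together with the sign pattern of $A$ relative to the orthant. To be fair, the paper itself only asserts this reduction; but as written your proposal, like the paper, proves the statement with $\Delta_{m_O}$ in place of the advertised $\Delta_n$.
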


\subsection{Proofs}\label{sec:orthant+proofs}

\begin{proof}[Proof of Proposition~\ref{prop:intersection+coordinate+hyperplane}]
  Using the construction in~\eqref{eq:modified+incidence+matrix+elimination} and the fact that $\ominus|z|, |z| \in \Uncomp(z)$ for $z \in \Tzero$, the inclusion $\zeta_i(A) \subseteq \tconv(A) \cap H_i$ follows from Definition~\ref{def:inner+hull}.
  Example~\ref{ex:coordinate+hyperplane} and Proposition~\ref{prop:immediate+properties+convex+sets}.\ref{item:intersection+convex+sets} imply that $\tconv(\zeta_i(A)) \subseteq \tconv(A) \cap H_i$.
  
  \smallskip

  For the other inclusion, assume that there is a point $z \in \tconv(A)$ with $z_i = \Zero$ which is not contained in $\tconv(\zeta_i(A))$. By Proposition~\ref{prop:equivalence+hull+solution}, this implies that
  \[
\nnker
  \begin{pmatrix}
    0 & \ominus 0 \\
    \zeta_i(A) & \ominus z 
  \end{pmatrix}
  = \emptyset \enspace . 
  \]
  The Farkas Lemma~\ref{thm:farkas+lemma} implies that
\[
\sep \begin{pmatrix}
    0 & \ominus 0 \\
    \zeta_i(A) & \ominus z 
\end{pmatrix} \neq  \emptyset  \enspace .
\]
Furthermore, we get
\[
\sep( \zeta_i \begin{pmatrix}
  0 & \ominus 0 \\
  A & \ominus z 
\end{pmatrix}) \neq \emptyset
 \]
as, because of $z_i = \Zero$, the last column is unchanged by $\zeta_i$ and the first row remains the same due to the definition of $\lambda^+(u,v)$ and $\lambda^-(u,v)$ for~\eqref{eq:convex+cancellation}.

  However, by Corollary~\ref{cor:signed+fourier+motzkin}, then also
  \[
     \sep \begin{pmatrix}
    0 & \ominus 0 \\
    A & \ominus z 
      \end{pmatrix}
    \neq   \emptyset \enspace .
    \]
    Using again the Farkas Lemma~\ref{thm:farkas+lemma}, this implies $z \not\in \tconv(A)$, a contradiction. 
\end{proof}

To derive the next claim, we start with a more precise description of a signed tropical line segment. 
There is a natural bijection between $\Delta_2 = \SetOf{(\nu,\mu) \in \Tgeq^2}{\max(\nu,\mu) = 0}$ and $\Tclosed = \RR \cup \{-\infty,\infty\}$ given by
\[
(\nu,\mu) \mapsto \mu - \nu \enspace .
\]
We denote the inverse image of an element $\eta \in \Tclosed$ with respect to this map by $\Psi(\eta)$.
This leads to a parametrization of a tropical line segment for $a,b \in \TTpm^d$ via $\tconv(a,b) = \SetOf{L_{\eta}(a,b)}{\eta \in \Tclosed}$ where $L_{\eta}(a,b) = \Psi(\eta)_0 \odot a \oplus \Psi(\eta)_1 \odot b$. Note that $L_{-\infty}(a,b) = a$ and $L_{\infty}(a,b) = b$.

\begin{proof}[Proof of Proposition~\ref{prop:intersection+orthant}]
  Let $z \in \tconv(A) \cap \Tgeq^d$ be an element of $\Uncomp(A \odot \lambda)$ with $\lambda \in \Delta_d$.
  We consider the tropical line segments from $z$ to the columns of $A$.
  For a fixed column $a^{(j)}$ of $A$ with $a^{(j)} \not\in \Tgeq^d$, there is a minimal $\eta \in \Tclosed$ such that a component of $L_{\eta}(z,a^{(j)})$ is balanced.

  \smallskip
  
  {\it Intermediate claim I:} All entries of $L_{\eta}(z,a^{(j)})$ are either balanced or in $\Tgeq$.
  For an arbitrary row $i \in [d]$, the expression $\Psi(\eta)_1 \odot z_i \oplus \Psi(\eta)_2 \odot a^{(j)}_i$ is in $\Tgeq$ for $\eta = -\infty$. 
  The claim now follows from the piecewise definition of the addition in terms of the absolute values.

  \smallskip

  Using Proposition~\ref{prop:generation+by+line+segments}, we see that the point $b^{(j)}$ obtained from $L_{\eta}(z,a^{(j)})$ by replacing all balanced entries with $\Zero$ is in $\tconv(A) \cap \Tgeq^d$. For $a^{(j)} \in \Tgeq^d$ we set $b^{(j)} = a^{(j)}$.

  \smallskip

  {\it Intermediate claim II:} The point $z$ is in the convex hull of $\SetOf{b^{(j)}}{j \in [n]}$. It is enough to show that
  \[
  \nnker(
  \begin{pmatrix}
    0 & \ldots  & 0 & 0 & \ominus 0 \\
    a^{(1)} & \ldots & a^{(n-1)} & b^{(n)} & \ominus z 
    \end{pmatrix} ) \neq \emptyset
  \]
  because than we can iteratively replace the columns $a^{(i)}$ by $b^{(i)}$.
  Let $b^{(1)} \in  \nu \odot z \oplus \mu \odot a^{(1)}$. 
  Pick an element $x$ scaled such that $x_1 = \mu$.
  Then
  \begin{equation*}
  \begin{pmatrix}
    \nu & \Zero & \ldots & \Zero & \ominus \nu \\
    \nu \odot z & \Zero & \ldots & \Zero & \nu \odot \ominus z 
  \end{pmatrix} \oplus
  \begin{pmatrix}
    0 & 0 & \ldots  & 0 & \ominus 0 \\
    a^{(1)} & a^{(2)} & \ldots & a^{(n)} & \ominus z 
  \end{pmatrix} \odot x 
  \end{equation*}
  is in $\Tzero^d$.
  Therefore $(0,\ldots,0)$ is in the non-negative kernel of 
  \begin{equation} \label{eq:sum+canceling+combination}
    \begin{aligned}
  \begin{pmatrix}
    \nu \oplus x_1 & x_2 & \ldots  & x_n & \ominus (\nu \oplus x_{n+1})\\
    \nu \odot z \oplus x_1 \odot a^{(1)} & x_2 \odot a^{(2)} & \ldots & x_n \odot a^{(n)} & (\nu \oplus x_{n+1}) \odot (\ominus z)
  \end{pmatrix} = \\
    \begin{pmatrix}
    0 & x_2 & \ldots  & x_n & \ominus (\nu \oplus x_{n+1})\\
    \nu \odot z \oplus \mu \odot a^{(1)} & x_2 \odot a^{(2)} & \ldots & x_n \odot a^{(n)} & (\nu \oplus x_{n+1}) \odot (\ominus z)
  \end{pmatrix}
    \end{aligned}
  \end{equation}
  For fixed $i \in [d]$, if $\nu \odot z_i \oplus \mu \odot a^{(1)}_i$ is not balanced or the maximum absolute value is attained somewhere else in the row, we can replace it by $b^{(1)}_i$ and $(0,\ldots,0)$ is still in the non-negative kernel.
  
  Otherwise, $\ominus \nu \odot z_i = \mu \odot a^{(1)}_i$ and $\nu \oplus x_{n+1} = \nu$.
  But $(0,\ldots,0)$ is also in the non-negative kernel of 
  \[
  \begin{pmatrix}
     \mu & x_2 & \ldots  & x_n & \ominus x_{n+1}\\
    \mu \odot a^{(1)} & x_2 \odot a^{(2)} & \ldots & x_n \odot a^{(n)} & x_{n+1} \odot (\ominus z)
  \end{pmatrix} \enspace .
  \]
  Since $\mu \odot a^{(1)}_i = \ominus \nu \odot z_i$ has the same sign as $\ominus x_{n+1} \odot z_i$ and we have $\nu \geq x_{n+1}$, there has to be an $\ell \in [n]$ such that $x_{\ell} \odot a^{(\ell)}_i = \ominus \mu \odot a^{(1)}_i = \nu \odot z_i$. Therefore, we can replace $\nu \odot z_i \oplus x_1 \odot a^{(1)}$ by $\Zero$ and $(0,\ldots,0)$ remains in the non-negative kernel.

  Therefore, $(0,x_2, \ldots, x_n, (\nu \oplus x_{n+1}))$ is in the non-negative kernel of
  \[
      \begin{pmatrix}
    0 & 0 & \ldots  & 0 & \ominus 0\\
    b^{(1)} & a^{(2)} & \ldots & a^{(n)} & \ominus z
  \end{pmatrix} \enspace .
      \]
      This finishes the proof of claim II. 
  
  
\end{proof}

\section{Acknowledgements}

We thank Daniel Dadush for insisting on balanced numbers and for inspiring
discussions on the tropical Fourier-Motzkin elimination. 
We thank Xavier Allamigeon as well as St{\'e}phane Gaubert for communicating their related work.
We are grateful to Matthias Schymura for helping to get an intuition for the unintuitive line segments.

\iffull\else
\appendix

\section{Missing proofs}

\bpteq*
\proofbpteq

\beqcomb*
\proofbeqcomb

\genlineseg*
\proofgenlineseg

\conhc*
\proofconhc

\eqhullsol*
\proofeqhullsol

\lifthull*
\prooflifthull

\elimstrict*
\proofelimstrict

\elimnonstrict*
\proofelimnonstrict
\section{Other notions of tropical convexity}\label{app:b-conv}

Parallel to the development of tropical convexity, the more general notion of $\BB$-convexity was developed starting with~\cite{BriecHorvath:2004}.
The notion of $\BB$-convexity boils down to convexity defined over the semiring $\RR_{\geq 0}$ with operations `$\oplus$' $=$ `$\max$' and `$\odot$' $=$ `$\cdot$', see~\cite[Theorem 2.1.1]{BriecHorvath:2004}.
Taking logarithms transforms these operations to `$\oplus$' $=$ `$\max$' and `$\odot$' $=$ `$+$' on $\RR \cup \{-\infty\}$.
This gives rise to a transferred version of $\BB$-convexity on $\TTpm$ by considering the images of $\BB$-convex sets in $\RR^d$ under the map $\slog \colon x \mapsto \sgn(x) \log(|x|)$.

 The following example shows that our notion of signed tropical convexity is an even more restrictive notion than $\BB$-convexity and $\BB^{\sharp}$-convexity~\cite{Briec:2015}.
\begin{example} \label{ex:different+convexities}
  The tropical convex hull of $A = \{(\ominus 2, \ominus 1), (2,1)\}$ is the set
  \[
  [\ominus 2, 2] \times [\ominus 1,1] \enspace . 
  \]
  However, the set $\Co^r(A)$ is
  \[
  L = \SetOf{(2\odot\lambda,\lambda)}{\lambda \in [\ominus 1,1]} \enspace .
  \]
  for all $r \in \NN$.
  In particular, also $\Co^{\infty}(A)$ equals $L$.
  This implies that $\BB(L) = L$.
  We depict both in Figure~\ref{fig:BB+convex+trop+convex}.
   Hence, $\tconv(A)$ strictly contains $\BB(A)$. 
   Furthermore,~\cite[Corollary 4.2.4]{Briec:2015} shows that $L$ is also $\BB^{\sharp}$-convex.

   Interestingly, the set $L$ is also the image under the signed valuation of the set
  \[
  \conv\left( \pvec{-t^2}{-t}, \pvec{t^2}{t} \right) \enspace .
  \]
  Here, we mean the convex hull over the Puiseux series $\pseries{R}{t}$.
  So $L$ is the tropicalization of a single line segment while our hull construction yields the union of line segments whose spanning points tropicalize to $A$, as we saw in Section~\ref{subsec:puiseux+lifts}.
  For example, we get the set $\{\ominus 2\} \times [\ominus 1, 1] \cup [\ominus 2, 2] \times \{1\}$ as the tropicalization of $\conv\left(\pvec{-2t^2}{-t},\pvec{t^2}{2t}\right)$.
\end{example}

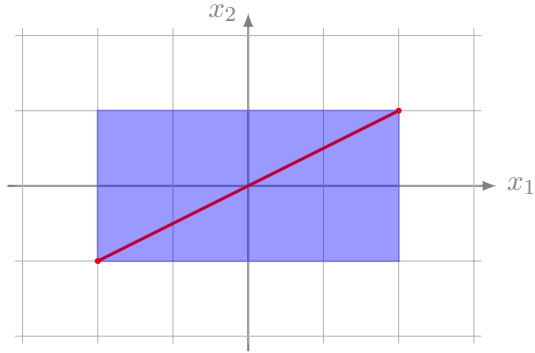
\begin{figure}[ht]
  \begin{tikzpicture}

  \Koordinatenkreuz{-3.2}{3.3}{-2.2}{2.3}{$x_1$}{$x_2$}
  \Gitter{-3.1}{3.1}{-2.1}{2.1}

  \coordinate (pp) at (2,1){};
  \coordinate (mp) at (-2,1){};
  \coordinate (pm) at (2,-1){};
  \coordinate (mm) at (-2,-1){};
  
  \draw[fill,Linesegment,opacity=0.4] (pp) -- (mp) -- (mm) -- (pm) -- cycle;

  \node[Endpoint] at (pp){};
  \node[Endpoint] at (mm){};

  \draw[purple, very thick] (pp) -- (mm);
  
\end{tikzpicture}
  \caption{Distinction between $\BB$-convex line and tropical line segment through the origin. }
  \label{fig:BB+convex+trop+convex}
\end{figure}

 \begin{remark} \label{rem:cancellative+sum}
It is tempting to define a \emph{cancellative sum} for two numbers $a, b \in \TTpm$ by
\[
a \overline{\oplus} b =
\begin{cases}
  a & |a| > |b| \\
  b & |b| > |a| \\
  a & a = b \\
  \Zero & a = \ominus b
\end{cases}
\enspace .
\]
This can be extended componentwise to $\TTpm^d$.

An iterative version of this construction is used in~\cite{Briec:2015}.
A conceptional drawback of the cancellative sum is that it is not associative, as the example
  \[
  0 \cancsum (\ominus 0 \cancsum -1) = 0 \cancsum \ominus 0 = \Zero \neq -1 = \Zero \cancsum -1 = ( 0 \cancsum \ominus 0) \cancsum -1 
  \]
  shows.
\end{remark}

   \begin{remark}
     Recall that Theorem~\ref{thm:generators+for+all+orthants} gave a way to determine the whole tropical convex hull from the tropical convex hull in each orthant.
     For sufficiently generic matrices, one can use the cancellative sum from Remark~\ref{rem:cancellative+sum}.
  If there are no antipodal points, no balanced coefficients arise in the elimination and the map $\xi$ used for Theorem~\ref{thm:replace+balanced} is just the identity. 
  Hence, the iterative construction of a single intersection point with a coordinate hyperplane suffices.      
   \end{remark}

\fi

\bibliographystyle{amsplain}
\bibliography{main}

\end{document}